\documentclass[leqno,10pt]{amsart}
\usepackage[left=1.in, right=1.in, top=1.0in, bottom=.9in, headsep=.2in, footskip=0.35in]{geometry}
\input{Setup/document_style}
\input{Setup/document_macros}
\input{Setup/special_commands}

\usepackage[most]{tcolorbox}

\usepackage[backend=biber,style=alphabetic,sorting=nyt]{biblatex} 
\bibliography{bibliography.bib} 

\title{On the Morel Structure Conjecture}

\author{Giacomo Bertizzolo}

\date{\today}

\begin{document}


\begin{abstract} 
     We prove that Witt K-theory is effective over any perfect field of characteristic \(2\) and consequently finish the proof of the Morel structure conjecture initiated by Bachmann.
\end{abstract}

\maketitle

\setcounter{tocdepth}{1}
\tableofcontents

\begin{quote}
    \begin{center}
        ``I'm Hudson, sir. He's Hicks.'' - Aliens (1986).
    \end{center}
\end{quote}
\section{Introduction}
The classical Postnikov tower decomposes a spectrum into layers controlled by its homotopy groups. 
By Bott-periodicity,
the spectrum representing topological real K-theory \(\mathrm{KO}\) is eight-periodic with homotopy groups given by \(\bfZ\), \(\bfZ/2\), \(\bfZ/2\), \(0\), \(\bfZ\), \(0\), \(0\), and \(0\).\\
In motivic homotopy theory, the presence of two possible candidates for a circle, the \emph{Tate} circle \(\bfG_m\) and the simplicial circle \(S^1\), gives rise to at least two distinct Postnikov-type filtrations called  the (effective) slice filtration \cite{Voevodsky_OpenProblems} (\(\bfG_m\)-Postnikov type filtration) and the generalized (or very effective) slice filtration \cite{Rondigs_Ostvaer_SlicesHermitianK} (a \(\bfP^1\)-Postnikov type filtration).
We are interested in studying the motivic analogue of \(\mathrm{KO}\); this motivic spectrum is called the Hermitian\footnote{The underlying theory is called Grothendieck-Witt theory.} K-theory motivic spectrum \(\bfK\bfQ\).
It was first constructed in \cite{Hornbostel_HermitianK} over fields of characteristic not equal to \(2\), and in \cite{calmes2025motivicspectrumrepresentinghermitian} over any base scheme without assuming that \(2\) is invertible.
Motivic Bott-periodicity holds for \(\bfK \bfQ\) \cite[Prop. 1.0.4] {calmes2025motivicspectrumrepresentinghermitian}, which is likewise four-periodic\footnote{This four-periodicity should be interpreted as an eight-periodicity in topology, since as we previously mentioned \(\bfP^1\) is an analogue of \(S^2\)}, i.e.,
\(\bfK\bfQ \simeq \bfK\bfQ \otimes (\bfP^1)^{\otimes 4}\).
However, the slice filtration for \(\bfK\bfQ\) does not closely reflect the topological behavior of \(\mathrm{KO}\), in the sense that effective slices are not simply given by motivic cohomology \(\HZ\) (which can be defined as the zeroth slice of the sphere spectrum, the tensor-unit in \(\SH(k)\).) and \(\HZ/2\) up to twist.
The effective slices of \(\bfK\bfQ\) have been computed by R{\"o}ndigs-{\O}stav{\ae}r \cite[Thm. 4.18]{Rondigs_Ostvaer_SlicesHermitianK} and by Kolderup-R{\"o}ndigs-{\O}stav{\ae}r \cite[Cor. 4.4]{kolderup2025hermitianktheorymilnorwittmotivic}. 
Rather than exhibiting four-periodicity, the slices have a two-periodic description, that, however, involves both twisting and shifting of some components, and the addition of extra components every period.
Better behavior is expressed by the generalized slices, as proved by Bachmann \cite{Bachmann_GenSlicesKO} over a field of characteristic not equal to \(2\), and by Kolderup-R{\"o}ndigs-{\O}stav{\ae}r \cite{kolderup2025hermitianktheorymilnorwittmotivic} over essentially smooth schemes over a Dedekind scheme.
The generalized effective slices of \(\mathbf K\mathbf Q\) are four-periodic up to twists.
For \(i \equiv 1,2,3 \pmod 4\), the \(\tilde{s}_i(\bfK\bfQ)\) are simply given by some ordinary zero-slices, reflecting the vanishing of \(\pi_i KO\) for \(i=3,5,7\); furthermore, these zero-slices can be described in terms of motivic cohomology.
Instead, \(\tilde{s}_0(\bfK\bfQ)\) is an extension of two objects, mirroring the fact that both \(\pi_0 KO\) and \(\pi_1 KO\) are nonzero.\\
One might have expected that \(\tilde{s}_0(\bfK\bfQ)\) would be described as an extension with terms given by motivic cohomology, but a more complicated structure appears.
The slices fit into fiber sequences (\cite{Bachmann_GenSlicesKO}, \cite{kolderup2025hermitianktheorymilnorwittmotivic}) involving two ``modified versions'' of motivic cohomology, \(\HZtilde\) and \(\HWZ\),
\begin{equation*}
    \HZ/2[1] \to \tilde{s}_0(\bfK\bfQ) \to \HZtilde 
    \quad \text{and} \quad 
    \HWZ \otimes \bfG_m \to \tilde{s}_0(\bfK\bfQ) \to \HZ.
\end{equation*} 
These spectra are called \emph{Milnor-Witt} and \emph{Witt} motivic cohomology, respectively.
They can be defined as the effective cover of Milnor-Witt \(\rmH \bfK^{MW}\) and Witt K-theory \(\rmH \bfK^{W}\), respectively.
These definition should not appear strange, as Motivic cohomology is equivalent to the effective cover of Milnor K-theory \(\rmH \bfK^{M}\). \\
To better understand these variants of motivic cohomology, one should study them simultaneously.
The Morel Structure conjecture offers a description of the relations between these motivic spectra.
In \cite{Bachmann_GenSlicesKO}, Bachmann solved the conjecture in the characteristic not equal to \(2\) case, and this paper completes the proof of the conjecture in characteristic \(2\).
\begin{theorem}[(Morel Structure Conjecture, First Part)]\label{thm:Morel-str-conj}
    Let \(k\) be any perfect field (without restriction on the characteristic of \(k\)). Consider the square in \(\SH(k)\)
    \begin{equation}\label{eq:str-conj}
        \begin{tikzcd}[cramped]
        	{\HZtilde \simeq f^0\left(\rmH\K^{MW}\right)} && {\rmH\K^{MW}} \\
        	\\
        	{\HZ \simeq f^0\left(\rmH\K^{M}\right)} && {\rmH\K^{M}}
        	\arrow[from=1-1, to=1-3]
        	\arrow[from=1-1, to=3-1]
        	\arrow[from=1-3, to=3-3]
        	\arrow[from=3-1, to=3-3]
        \end{tikzcd}
    \end{equation}
    where:
    \begin{itemize}
        \item The right vertical map is given by \(\rmH\K^{MW} \to \rmH\K^{MW}/\eta \simeq\rmH\K^{M}\).
        \item The left vertical map is given by applying the effective cover \(f^0(\bullet)\) to the right vertical map.
        \item The horizontal maps are the counit of the adjunction 
            \begin{equation*}
                \SHeff(k) \stackrel[r^0]{i^0}{\longrightleftarrows} \SH(k).
            \end{equation*}
    \end{itemize}
    The square is a cartesian square in \(\SH(k)\).
\end{theorem}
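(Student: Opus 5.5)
The plan is to compare the horizontal fibers. A square in the stable category \(\SH(k)\) is cartesian if and only if the induced map on fibers of the horizontal maps is an equivalence. The fiber of a counit \(f^0E \to E\) is \(f_{<0}E := \mathrm{cofib}(f^0 E \to E)[-1]\), the ``negative part'' of \(E\) with respect to the effective filtration. So it suffices to show that the map \(f_{<0}\rmH\K^{MW} \to f_{<0}\rmH\K^{M}\) induced by \(\rmH\K^{MW}\to\rmH\K^{M}\) is an equivalence.

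Because \(f^0\) is exact, the fiber of \(\rmH\K^{MW}\to\rmH\K^{M}\) is \(\rmH\K^{MW}\otimes\bfG_m\), via multiplication by \(\eta\). Its effective cover is \(f^0(\rmH\K^{MW}\otimes \bfG_m)\). The total fiber of the square is then the fiber of
\[
f^0(\rmH\K^{MW}\otimes\bfG_m)\to \rmH\K^{MW}\otimes\bfG_m .
\]
The square is therefore cartesian if and only if \(\rmH\K^{MW}\otimes\bfG_m\) is effective, i.e.\ lies in \(\SHeff(k)\), which is equivalent to \(\rmH\K^{MW}\otimes\bfG_m^{\otimes 1}\in \SHeff(k)\).

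To prove this, use the fundamental cartesian square relating Milnor--Witt K-theory to Milnor K-theory and Witt K-theory. At the level of homotopy modules, \(\K^{MW}\) is the pullback of \(\K^M\) and \(\K^W\) over \(\K^M/2\). On the Eilenberg--MacLane spectra this gives a fiber sequence
\[
\rmH\K^{MW}\to \rmH\K^M\oplus \rmH\K^W\to \rmH\K^M/2 .
\]
Moreover, \(\eta\) acts by zero on \(\rmH\K^M\) and \(\rmH\K^M/2\), so \(\rmH\K^M\otimes\bfG_m\) and \((\rmH\K^M/2)\otimes\bfG_m\) are effective, as \(\eta\)-torsion/cofiber objects. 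More carefully, \(\rmH\K^M\simeq \HZ\)-modules are effective by the standard argument through \(f^0\rmH\K^M\simeq\HZ\) together with the identification of negative homotopy sheaves. After tensoring with \(\bfG_m\) and using that \(\SHeff(k)\) is closed under extensions and fibers of maps between effective objects shifted appropriately, the claim reduces to the key statement that \(\rmH\K^W\otimes\bfG_m\) is effective. Since \(\eta\) is invertible on \(\rmH\K^W\), this is the same as \(\rmH\K^W\) itself being effective. This is precisely the effectivity of Witt K-theory announced in the abstract.

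The main obstacle is proving effectivity of \(\rmH\K^W\) in characteristic \(2\), where Bachmann's argument through \(\rmH\K^W[\eta^{-1}]\) and real étale or quadratic-form techniques (relying on \(2\) invertible) is unavailable. I would try the following criterion: a spectrum \(E\) is effective iff \(\underline{\pi}_0(E)_{-n}\) vanishes appropriately, i.e.\ \(E\otimes\bfG_m^{\otimes n}\) has effective homotopy modules, tested on finitely generated field extensions. In characteristic \(2\), \(\K^W_n(F) = I^n(F)\) for the Witt group of symmetric bilinear forms, and \(I^n\) is additively generated by Pfister forms \(\langle\langle a_1,\dots,a_n\rangle\rangle\). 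The goal is to show that the contraction and transfer structure exhibits \(\K^W_*\) as generated by \(\K^W_0\) under transfers. Equivalently, the canonical map \(\Sigma^\infty_+\)-type presentation \(f^0\rmH\K^W\to\rmH\K^W\) is surjective on \(\underline\pi_0(-)_*\) in all weights, using Kato's results on \(I^n\) in characteristic \(2\) (\(I^n/I^{n+1}\cong \K^M_n/2\) and the Milnor conjecture analogue) to control the graded pieces inductively. This inductive surjectivity, handling the non-\(\bfA^1\)-invariance subtleties of perfect fields of characteristic \(2\), is where I expect the real work.
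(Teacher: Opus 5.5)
\textbf{There is a genuine gap.} Your fiber-comparison idea is sound, but the fiber is misidentified. Since \(f^0\) is exact, the square is cartesian if and only if the fiber \(Q\) of \(\rmH\K^{MW}\to\rmH\K^{M}\) is effective. However, \(Q\) is not \(\rmH\K^{MW}\otimes\bfG_m\).

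Multiplication by \(\eta\) is not injective on \(\K^{MW}_\ast\), because \(\eta h=0\). Already \(K^{MW}_0(F)=GW(F)\to W(F)=K^{MW}_{-1}(F)\) kills \(\bfZ h\). So the cofiber of \(\eta\) in \(\SH(k)\) has nonzero \(\pi_1\), and the identification \(\rmH\K^{MW}/\eta\simeq\rmH\K^M\) in the statement means the cokernel in the heart. The actual fiber is the kernel in the heart, the homotopy module \(\bfI^{\ast+1}\). That is, \(Q\simeq\rmH\K^W\otimes\bfG_m\), which you can read off from Corollary \ref{cor:squareKMW-Algebras} and the Kato sequence \eqref{eq:short-exact-seq-Kato}.

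So the theorem is equivalent to effectivity of \(\rmH\K^W\otimes\bfG_m\). You do arrive at this statement, but through false intermediate claims:
\begin{itemize}
\item \(\rmH\K^M\) is not effective in general (Remark \ref{rmk:non-effectivity-KMW}).
\item Being \(\eta\)-torsion says nothing about effectivity.
\item \(\eta\) is not invertible on \(\rmH\K^W\): it is the inclusion \(\bfI^{n+1}\subset\bfI^{n}\), with cofiber \(\rmH\K^M/2\).
\end{itemize}
Once the fiber is identified correctly, your reduction gives a quicker deduction of the characteristic \(2\) case from effectivity of \(\rmH\K^W\) than the paper's split into a mod \(2\) part and a \(2\)-inverted part. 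This is because \(\rmH\K^W\otimes\bfG_m\) is then trivially effective.

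The false \(\eta\)-invertibility claim is fatal for the stated generality. By the Kato sequence, \(\rmH\K^W\) is effective if and only if both \(\rmH\K^W\otimes\bfG_m\) and \(\rmH\K^M/2\) are. The latter holds in characteristic \(2\) (Geisser--Levine) but fails otherwise. In characteristic \(\neq 2\), \(\rmH\K^W\) is \emph{not} effective (Remark \ref{rmk:non-effectivity-KW}), so your plan asks you to prove a false statement there. That case needs Bachmann's computation of the homotopy sheaves (Theorem \ref{thm:MSC-char-not-two}), which is how the paper handles it.

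In characteristic \(2\), your sketch of effectivity of \(\rmH\K^W\) does not work as stated. Surjectivity, or even bijectivity, of \(f^0\rmH\K^W\to\rmH\K^W\) on \(\pi_0(-)_\ast\) does not imply effectivity. In characteristic \(\neq2\), \(\HWZ\to\rmH\K^W\) is an isomorphism on \(\pi_0\), yet \(\rmH\K^W\) is not effective. What must be controlled is the higher homotopy of the effective cover, equivalently the negative slices. The ingredients you are missing are:
\begin{itemize}
\item the Bachmann--Fasel criterion together with the homotopy coniveau tower, which reduces the problem to contractibility of \(\lvert R\Gamma_{Nis}(-,I^d)(\hat\Delta^\bullet_F)\rvert\) for \(d\geq 1\);
\item the Kato sequence plus effectivity of \(\rmH\K^M/2\), which show these realizations are independent of \(d\geq1\);
\item Milnor's theorem that \(I^{j+1}(F)=0\) when \([F:F^2]=2^j\), with \(j\) the transcendence degree over the perfect base;
\item injectivity into the function field for the semilocal schemes \(\hat\Delta^m_F\), which together with Milnor's theorem kills each simplicial level once \(d\) is large relative to \(m\).
\end{itemize}
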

\begin{theorem}[(Morel Structure Conjecture, Second Part)]\label{thm:Morel-str-conj-2}
    Let \(k\) be any perfect field (without restriction on the characteristic of \(k\)). Consider the square in \(\SH(k)\)
    \begin{equation}\label{eq:str-conj-2}
        \begin{tikzcd}[cramped,sep=scriptsize]
    	{\HWZ \simeq f^0(\rmH\K^{W})} &&& {\rmH\K^{W}} \\
    	\\
    	\\
    	{\HZ/2 \simeq f^0(\rmH\K^{M}/2)} &&& {\rmH\K^M/2}
    	\arrow[from=1-1, to=1-4]
    	\arrow[from=1-1, to=4-1]
    	\arrow[from=1-4, to=4-4]
    	\arrow[from=4-1, to=4-4]
    \end{tikzcd}
    \end{equation}
    where:
    \begin{itemize}
        \item The right vertical map is given by \(\rmH\K^{W} \to \rmH\K^{W}/\eta \simeq\rmH\K^{M}/2\).
        \item The left vertical map is given by applying \(f^0(\bullet)\) to the right vertical map.
        \item the horizontal maps are the counit of the adjunction 
           \begin{equation*}
                \SHeff(k) \stackrel[r^0]{i^0}{\longrightleftarrows} \SH(k).
            \end{equation*}
    \end{itemize}
    The square is a cartesian square in \(\SH(k)\).
\end{theorem}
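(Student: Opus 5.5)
A square in a stable category is cartesian exactly when the induced map on horizontal cofibers (equivalently, vertical fibers) is an equivalence. I will use this criterion for both squares.

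\textbf{First part.} The right vertical map in \eqref{eq:str-conj} is \(\rmH\K^{MW}\to \rmH\K^{MW}/\eta\). Its fiber is \(\rmH\K^{MW}\otimes \bfG_m\), up to the usual shift conventions, and is \(\eta\)-multiplication from a twist of \(\rmH\K^{MW}\). Since \(f^0\) is exact, the fiber of the left vertical map is \(f^0(\rmH\K^{MW}\otimes\bfG_m)\). Squareness therefore amounts to showing that
\[
f^0(\rmH\K^{MW}\otimes\bfG_m)\to \rmH\K^{MW}\otimes\bfG_m
\]
is an equivalence, i.e.\ that \(\rmH\K^{MW}\otimes\bfG_m\) is effective. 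The fiber of \(\rmH\K^{MW}\to\rmH\K^M\) is identified by the fundamental square \(\K^{MW}=\K^M\times_{\K^M/2}\K^W\), which is cartesian on homotopy modules. That identification turns \(\eta\)-multiplication on \(\K^{MW}_{*+1}\) into \(\eta\)-multiplication on \(\K^W_{*+1}\), whose image is the fiber. So I expect the fiber to be \(\rmH\K^W\otimes\bfG_m\), and one must show \(\rmH\K^W\otimes \bfG_m\) is effective. Since \(\eta\) acts invertibly on \(\K^W\) (\(\eta: \K^W_{n+1}\cong \K^W_n\)), we get \(\rmH\K^W\otimes\bfG_m\simeq \rmH\K^W\) up to the appropriate \(S^1\)-shift. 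The first part thus reduces to \emph{effectivity of \(\rmH\K^W\)}, the statement announced in the abstract.

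\textbf{Second part.} In the same way, the fiber of \(\rmH\K^W\to\rmH\K^W/\eta\) is \(\rmH\K^W\otimes\bfG_m\), via \(\eta\)-multiplication into \(\K^W\) with cokernel \(\K^W/\eta=\K^W_*/I^{*+1}\cong \K^M/2\) (Milnor conjecture/Kato in char \(2\)). The same effectivity statement then gives that the left fiber agrees with the right fiber, so the second square is cartesian. The identifications \(f^0(\rmH\K^M)\simeq\HZ\) and \(f^0(\rmH\K^M/2)\simeq \HZ/2\) are taken as known inputs and are not needed for cartesianness itself.

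\textbf{Main obstacle: effectivity of \(\rmH\K^W\) in characteristic \(2\).} A spectrum \(E\in\SH(k)^{\heartsuit}\) given by a homotopy module is effective iff its \(\bfG_m\)-desuspensions vanish in effective-cotruncated sense. Concretely, following Bachmann, I want to show the cofiber \(\rmH\K^W\to r\) of the effective cover has vanishing weight-\(n\) homotopy sheaves. Bachmann's criterion is that a homotopy module \(M_*\) is effective in the heart iff it is generated, as a sheaf, by \(M_0\) under the transfer/contraction structure, i.e.\ the maps \((M_0)_{-n}\otimes\ldots\) suffice. For \(\K^W\), which is \(\eta\)-periodic with \(\K^W_n=I^n\) for \(n\ge 0\) and \(=W\) for \(n<0\), this amounts to showing that the negative part is recovered from effective data, e.g.\ via \(\rmH\K^W[\eta^{-1}]\simeq \rmH W[\eta^{-1}]\), and comparing with the \(\eta\)-periodic sphere. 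In characteristic \(2\), I would use that \(\SH(k)[\eta^{-1}]\) is controlled by real étale topology. Since fields of characteristic \(2\) have no orderings, \(W(k)\) is \(2\)-primary torsion, with \(I^n\) eventually zero on finite \(2\)-cohomological dimension (\(\mathrm{cd}_2\le 1\) for perfect char \(2\), so \(I^2(K)=0\) for function fields of bounded transcendence via Kato). This suggests writing \(\rmH\K^W\) as a finite extension of twisted copies of \(\rmH\K^M/2\)-pieces: the filtration \(I^{n+1}\subset I^n\) with graded pieces \(\K^M_n/2\), where \(\K^M/2\) is effective. Passing to colimits/limits is the delicate step. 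I would reduce, via continuity and the perfect-field hypothesis, to checking effectivity on stalks at finitely generated fields, where the filtration is finite by Kato's bound on \(I^n\) in terms of the \(2\)-rank of \(\Omega^1\). I expect making this finiteness uniform, so that the limit is effective, to be the hardest part.
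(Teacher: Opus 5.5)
Your formal reduction is correct and worth keeping. Since \(f^0\) is exact and the counit is natural, the induced map on vertical fibers of \eqref{eq:str-conj-2} is the counit \(f^0(\rmH\K^W\otimes\bfG_m)\to\rmH\K^W\otimes\bfG_m\); here the fiber of the right vertical map comes from \eqref{eq:short-exact-seq-Kato}. Hence the square is cartesian if and only if \(\rmH\K^W\otimes\bfG_m\) is effective. In characteristic \(2\) this is equivalent to the paper's observation that both horizontal maps are equivalences.

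The next step, ``\(\eta\) acts invertibly on \(\K^W\), so \(\rmH\K^W\otimes\bfG_m\simeq\rmH\K^W\) up to shift'', is false. The map \(\eta\colon\K^W_{n+1}\to\K^W_n\) is the inclusion \(I^{n+1}\subset I^n\), whose cokernel \(\K^M_n/2\) is nonzero for \(n\geq 0\); you use exactly this in your second part. In characteristic \(2\) the error is harmless, since effectivity of \(\rmH\K^W\) still implies that of \(\rmH\K^W\otimes\bfG_m\). But the statement is over \emph{every} perfect field. In characteristic \(\neq 2\), \(\rmH\K^W\) is \emph{not} effective (Remark~\ref{rmk:non-effectivity-KW}), whereas \(\rmH\K^W\otimes\bfG_m\) is; by your own criterion that is equivalent to the theorem there. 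So your reduction aims at a false statement in that case, and you offer nothing else for it. The paper handles it with Bachmann's Theorem~\ref{thm:MSC-char-not-two}.

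The decisive gap is the characteristic \(2\) effectivity, which is the whole content of the theorem there. Your last paragraph does not prove it, for several reasons.
\begin{itemize}
\item The ``criterion'' via generation of \(M_*\) by \(M_0\) does not characterize effectivity of \(\rmH M\) in \(\SH(k)\).
\item The claim \(I^2(K)=0\) is false. Milnor's theorem gives \(I^j(K)\neq 0\) for \(j=\mathrm{trdeg}(K/k)\); for instance \(\langle\langle x,y\rangle\rangle\) is anisotropic, hence nonzero, over \(k(x,y)\).
\item The \(\eta\)-adic filtration with effective graded pieces \(\rmH\K^M/2\otimes\bfG_m^{\otimes i}\) is not finite in \(\SH(k)\): for every \(d\), \(\rmH\K^W\otimes\bfG_m^{\otimes d}\) has homotopy module \(W\) in weights \(\leq -d\). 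At best this exhibits \(\rmH\K^W\) as an inverse limit of effective spectra, and \(\SHeff(k)\) is closed under colimits but not a priori under limits.
\item ``Checking effectivity on stalks at finitely generated fields'' is not a valid reduction. The obstruction lives in \(s_0\) of the twists, whose value at \(F\) involves the semilocal simplices \(\hat{\Delta}^\bullet_F\) of unbounded dimension, not just \(F\).
\end{itemize}

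The missing idea is the one that resolves exactly the non-uniformity you flag.
\begin{itemize}
\item By the Bachmann--Fasel criterion (Theorem~\ref{thm:Bachmann-Fasel-effectivity}) and the homotopy coniveau tower, effectivity amounts to \(\bigl|R\Gamma_{Nis}(-,I^d)(\hat{\Delta}^\bullet_F)\bigr|\simeq\ast\) for all \(d\geq 1\).
\item Twisting \eqref{eq:short-exact-seq-Kato} by \(\bfG_m^{\otimes d}\) and using \(s_0(\rmH\K^M/2\otimes\bfG_m^{\otimes d})=0\) for \(d\geq 1\) shows this realization does not depend on \(d\geq 1\).
\item Semilocal injectivity into the function field kills \(H^{>0}_{Nis}(\hat{\Delta}^m_F,I^d)\), so all \(m\)-simplices are discrete.
\item A geometric realization is \(j\)-connective once its \(m\)-simplices are \((j-m)\)-connective (Remark~\ref{rmk:connectiveness-simplicial-spectrum}). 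So it suffices that \(I^d(\hat{\Delta}^m_F)=0\) for \(m<j\).
\item That vanishing holds once \(d>j+\mathrm{trdeg}(F/k)\). Indeed, \(I^d(\hat{\Delta}^m_F)\) injects into \(I^d(F(t_1,\dots,t_m))\), which vanishes by Milnor's bound.
\end{itemize}
Choosing \(d\) depending on \(j\) then gives contractibility.
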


In the case of characteristic \(2\), Theorem \ref{thm:Morel-str-conj-2} should be thought as a quadratic version of the Geisser-Levine Theorem \cite[Thm 8.3]{Geisser_Levine}.
The Geisser-Levine Theorem shows that in the case of a field \(k\) of positive characteristic \(p\), there is no \(p\)-adic motivic cohomology outside of Milnor K-theory; our result, similarly, shows that in characteristic \(2\), there is no Witt motivic cohomology outside Witt K-theory.\\
Let us explain in more detail how to obtain this result assuming Theorem \ref{thm:Morel-str-conj-2} holds.
On the category of motivic spectra \(\SH(k)\), for \(k\) a perfect field of characteristic \(p\), the Geisser-Levine Theorem and Nesterenko-Suslin-Totaro Theorem (\cite{NesterenkoSuslin}, \cite{Totaro_MilnorKtheory}) combine to give a zig-zag of equivalences, for every \(r \geq 1\), 
\begin{equation*}
    \HZ/p^r \overset{\tau_{\leq 0}}{\longrightarrow} \pi_0(\HZ/p^r) \longleftarrow \rmH \K^{M}/p^r,
\end{equation*}
where the first map is the coconnective truncation in the Morel homotopy t-structure, and the second one is the comparison map of Nesterenko-Suslin and Totaro; since \(\HZ\) is also equivalent to the effective cover of \(\rmH \bfK^{M}\) we can also interpret this as stating that, for a base field \(k\) perfect of characteristic \(p\), \(\rmH \bfK^{M}/p^r\) is effective. 
For \(p=2\) and \(r=1\), this implies the bottom map of square \eqref{eq:str-conj-2} is an equivalence.
Since the square \eqref{eq:str-conj-2} is cartesian, the top map must alsobe an equivalence.
Witt motivic cohomology is effective by definition (as it is an effective cover), so the equivalence implies Witt K-theory is also effective. 
Away from characteristic \(2\), Milnor K-theory mod \(2\) is not effective (see Remark \ref{rmk:non-effectivity-KMW}), therefore, this result cannot hold in other characteristics.

Our approach to proving Theorem \ref{thm:Morel-str-conj} and Theorem \ref{thm:Morel-str-conj-2} will actually go in the opposite direction. Assuming Witt K-theory is effective in characteristic \(2\), we will prove that the two squares \eqref{eq:str-conj} and \eqref{eq:str-conj-2} are cartesian. Then by using an effectivity criterion by Bachmann-Fasel \cite[Ch.7, Thm. 4.4]{Bachmann_MWMotives}, we are going to prove that \(\rmH \bfK^{W}\) is effective.

\subsection{Outline}
In section \ref{sec:Preliminaries}, we recall some results about the categories of motivic and effective motivic spectra, and about Milnor-Witt motivic cohomology and Milnor-Witt K-theory.

In section \ref{sec:BachmannProof}, we show how to pass from Bachmann's work to Theorems \ref{thm:Morel-str-conj} and \ref{thm:Morel-str-conj-2}.

In section \ref{sec:Char2}, we complete the proof of Theorems \ref{thm:Morel-str-conj} and \ref{thm:Morel-str-conj-2} in characteristic \(2\), under the assumption that Witt K-theory is effective. Then, using an effectivity criterion by Bachmann-Fasel, we will prove the effectivity of Witt K-theory.

\subsection{Conventions}
We refer to \(\infty\)-categories as ``categories''. We will denote Milnor K-theory as \(K^M_\ast\) for the graded group/ring/sheaf of groups, while we will denote it by \(\bfK^M_\ast\) for the corresponding homotopy module.

\subsection{Acknowledgment}
I am grateful to Elden Elmanto for his guidance and encouragement, and to Tom Bachmann for valuable suggestions on the proof and feedback on a draft of this article. I would also like to thank Klaus Mattis for reading a draft of this article and offering helpful comments.
\newpage
\section{Preliminaries}\label{sec:Preliminaries}

\subsection{SH and homotopy modules}\label{subsec:SH-and-HI}

Let \(k\) be a \emph{perfect} field (throughout all this section).
The category of \emph{motivic spectra} \(\SH(k)\) is the universal stable presentably symmetric monoidal category equipped with a symmetric monoidal functor \(\Sigma^\infty_+: \Smk \to \SH(k)\), satisfying
\begin{itemize}
    \item The canonical map \(\bfA^1_X \to X\) induces an equivalence \(\Sigma^\infty_+(\bfA^1_X) \to \Sigma^\infty_+(X)\).
    \item The Tate motive \(\bfT=\Sigma^\infty (\bfP^1, \infty) \coloneq \mathrm{cofib}(\Sigma^\infty_+ (\Spec(k)) \overset{\infty}{\to} \Sigma^\infty_+(\bfP^1))\) is tensor invertible.
    \item \(\Sigma^\infty_+\) carries Nisnevich distinguished squares in \(\Smk\) to cocartesian squares in \(\SH(k)\). 
\end{itemize}
The category of effective motivic spectra \(\SHeff(k)\) is the smallest stable subcategory of \(\SH(k)\) that is closed under colimits and contains the objects \(\Sigma^\infty_+(X)\) for all \(X \in \Smk\).
We define the category of \(n\)-effective motivic spectra as \(\SHeff(k)(n) \coloneq \SHeff(k) \otimes \bfT^{\otimes n}\). 
These categories allow us to construct the so-called slice filtration on \(\SH(k)\) (originally appearing in \cite[Sec. 2]{Voevodsky_OpenProblems}) as follows.
For all \(n \in \bfZ\), the inclusion \(i^n\) of \(\SHeff(k)(n)\) in \(\SH(k)\) preserves colimits, so there is an adjunction of exact functors
\begin{equation*}
    \SHeff(k)(n) \stackrel[r^n]{i^n}{\longrightleftarrows} \SH(k) \quad \text{with counit} \quad f^n \coloneq  i^n r^n \to \mathrm{id}.
\end{equation*}
Since these counits are the universal maps out of \(n\)-effective motivic spectra, we obtain a map \(f^{n+1} \to f^n\), and we can then consider cofiber sequences \(f^{n+1} \to f^n \to s^n\).
For a motivic spectrum \(E \in \SH(k)\), the slice filtration is given by \(Fil^\ast_{slice}(E) \coloneq  f^\ast(E) \in \mathrm{Fun}(\bfZ^\delta, \SH(k))\).

The functor \(\Sigma^\infty_+: \Smk \to \SH(k)\) factors, by universal properties, through a functor
\begin{equation*}
    \sigma^\infty: \Shv_{Nis, \bfA^1}(\Smk, \mathrm{Sp}) \to \SH(k),
\end{equation*}
where \(\mathrm{Sp}\) is the stable category of spectra.
Furthermore, there is a right adjoint to \(\sigma^\infty\)
\begin{equation*}
    \omega^\infty: \SH(k) \to \Shv_{Nis, \bfA^1}(\Smk, \mathrm{Sp}) , \quad E \mapsto \omega^\infty(E)(-)=\Map_{\SH(k)}(\Sigma^\infty_{+} -, E),
\end{equation*}
where \(\Map\) denotes the mapping spectrum.
There is a graded variant of the adjunction \(\sigma^\infty \dashv \omega^\infty\) given by
\begin{equation*}
   \omega^{\infty, gr} : \SH(k) \leftrightarrows \Shv_{Nis, \bfA^1}(\Smk, \mathrm{Sp})^{\bfZ} : \sigma^{\infty, gr}
\end{equation*}
where 
\begin{equation*}
    \sigma^{\infty, gr}(E)=\bigoplus_{j \in \bfZ} \sigma^{\infty}(E^j) \otimes (\bfT)^{\otimes j}
    \quad \text{and} \quad
    \omega^{\infty, gr}(E)^j = \omega^\infty(E \otimes (\bfT)^{\otimes j}).
\end{equation*}

\begin{lemma}\label{lemma:jointly-conservative}
    The functor \(\omega^{\infty,gr}\) is conservative. 
    The functor \(\omega^{\infty}\)is conservative when restricted to \(\SHeff(k)\). 
\end{lemma}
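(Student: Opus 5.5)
Both claims come down to the fact that \(\SH(k)\) and \(\SHeff(k)\) are compactly generated by specific families, so that a map detected on mapping spectra out of the generators is an equivalence. Since \(\omega^{\infty}\) and \(\omega^{\infty,gr}\) are exact functors between stable categories, it suffices to show that they detect zero objects. Indeed, if \(\omega^{\infty,gr}(f)\) is an equivalence, then \(\omega^{\infty,gr}(\mathrm{cofib}(f)) \simeq \mathrm{cofib}(\omega^{\infty,gr}(f)) \simeq 0\). So I reduce to showing: if \(\omega^{\infty,gr}(E)\simeq 0\), then \(E \simeq 0\), and similarly for \(\omega^\infty\) on \(\SHeff(k)\).

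For the first statement, I use the standard fact that \(\SH(k)\) is generated under colimits and desuspensions by the objects \(\Sigma^\infty_+ X \otimes \bfT^{\otimes j}\) for \(X \in \Smk\) and \(j \in \bfZ\). This follows from the construction of \(\SH(k)\) as the \(\bfT\)-stabilization of \(\Shv_{Nis,\bfA^1}(\Smk,\mathrm{Sp})\): it is the colimit of the stabilization tower along \(-\otimes \bfT^{-1}\), equivalently the limit along \(\Omega_\bfT\), so every object is a colimit of objects of the form \(\sigma^\infty(F)\otimes \bfT^{\otimes -n}\). The \(\bfA^1\)-Nisnevich sheaves \(F\) are in turn generated by representables. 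Now suppose \(\omega^{\infty,gr}(E)\simeq 0\). Then for every \(j\) and every \(X\),
\[
\Map_{\SH(k)}(\Sigma^\infty_+ X \otimes \bfT^{\otimes -j}, E) \simeq \Map(\Sigma^\infty_+ X, E\otimes \bfT^{\otimes j}) \simeq \omega^\infty(E\otimes\bfT^{\otimes j})(X) \simeq 0.
\]
Because these objects generate \(\SH(k)\), the full subcategory of objects \(G\) with \(\Map(G,E)\simeq 0\) is closed under colimits and shifts and contains all generators, so it is all of \(\SH(k)\). Taking \(G=E\) gives \(\mathrm{id}_E \simeq 0\), hence \(E\simeq 0\).

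For the second statement, the same argument applies with fewer generators. By the definition recalled above, \(\SHeff(k)\) is the smallest stable subcategory of \(\SH(k)\) closed under colimits and containing the objects \(\Sigma^\infty_+ X\). Let \(E\in\SHeff(k)\) with \(\omega^\infty(E)\simeq 0\). Then \(\Map(\Sigma^\infty_+X,E)\simeq 0\) for all \(X\in\Smk\). The objects \(G\in\SH(k)\) with \(\Map(G,E)\simeq 0\) form a stable subcategory closed under colimits, because \(\Map(-,E)\) sends colimits to limits. This subcategory contains the generators, so it contains all of \(\SHeff(k)\), in particular \(E\) itself. Hence \(E\simeq0\).

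The only step that is not formal is the generation statement for \(\SH(k)\) by Tate twists of suspension spectra. I would cite it from the standard construction of \(\SH(k)\), for instance as the formal inversion of \(\bfT\) in presentable stable categories; the rest is formal adjunction bookkeeping. Perfectness of \(k\) plays no role here.
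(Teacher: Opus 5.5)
Your proof is correct and takes the same route as the paper. The paper simply cites compact generation of \(\mathrm{SH}(k)\) by the objects \(\Sigma^\infty_+X\otimes\mathbf{G}_m^{\otimes j}\), and of the effective category by the \(\Sigma^\infty_+X\), and says the claims follow; you spell out the formal part (reducing to detection of zero objects via exactness, and closure of \(\{G : \mathrm{Map}(G,E)\simeq 0\}\) under colimits and shifts), and using \(\mathbf{T}\)-twists instead of \(\mathbf{G}_m\)-twists is harmless since \(\mathbf{T}\simeq\mathbf{G}_m[1]\), and in fact matches the definition of \(\omega^{\infty,gr}\) more directly.
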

\begin{proof}
    \(\SH(k)\) is compactly generated (see \cite[Proposition C.12]{Hoyois_QuadraticRefinement}) by the class of objects \(\Sigma^\infty_{+} X \otimes \bfG_m^{j}\) for all \(X \in \Smk\) and for all \(j \in \bfZ\). 
    Similarly, \(\SHeff(k)\) is compactly generated by the class of objects \(\Sigma^\infty_{+} X\). The claims follow. 
\end{proof}
The category \(\SH(k)\) comes with a t-structure \cite[Theorem 5.2.3]{Morel_Introduction}, referred to as the \emph{homotopy t-structure}, whose non-negative part is generated under colimits and  extensions by objects of the form 
\(\Sigma^\infty_{+} X_+\)
(see \cite[Sec. 2.1]{Hoyois_FromAGCtoMC}).\\
For every \(i, j \in \bfZ\) and every motivic spectrum \(E \in \SH(k)\) we can define the \((i,j)\)-homotopy sheaf associated with \(E\) as
\begin{equation*}
    \pi_i(E)_{j}(-) = \pi_i(\omega^\infty(E \otimes \bfG_m^{\otimes j}))(-) \simeq \pi_0(\Map(\Sigma^\infty_+ ( - ) [i], E \otimes \bfG_m^{\otimes j})) \in \Shv_{Nis}(\Smk, \mathrm{Ab}),
\end{equation*}
where \(\pi_0\) denotes the \(0\)-th homotopy group of the mapping spectrum.
Thanks to Morel’s connectivity Theorem \cite[Thm. 3]{Morel_StableConnectiviy}, a more explicit description of the homotopy t-structure is possible by using these homotopy sheaves, as in \cite[Thm. 2.3]{Hoyois_FromAGCtoMC}: \(E \in \SH(k)_{\geq 0}\) if and only if \(\pi_i(E)_j=0\) for all \(i-j < 0\).
From this, it follows that the t-structure is left and right complete \cite[Corollary 2.4]{Hoyois_FromAGCtoMC}. 
\begin{corollary}
    Let \(k\) be a perfect field. The collection of functors \(\pi_i(-)_j: \SH(k) \to \Shv_{Nis}(\Smk, \mathrm{Ab})\) for \(i, j \in \bfZ\) is jointly conservative. The collection of functors \(\pi_i(-)_0: \SHeff(k) \to \Shv_{Nis}(\Smk, \mathrm{Ab})\) for \(i \in \bfZ\) is jointly conservative. 
\end{corollary}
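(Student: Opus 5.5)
This is a direct consequence of Lemma \ref{lemma:jointly-conservative}: I will identify the homotopy sheaves \(\pi_i(-)_j\) with the homotopy sheaves of the spectrum-valued sheaves \(\omega^{\infty,gr}(-)^j\), and then reduce to the classical fact that a map of spectrum-valued sheaves is an equivalence exactly when it induces isomorphisms on all homotopy sheaves.

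\emph{First claim.} Let \(\phi: E \to F\) be a map in \(\SH(k)\) such that \(\pi_i(\phi)_j\) is an isomorphism for all \(i,j\in\bfZ\). By definition, \(\pi_i(E)_j\) is the \(i\)-th homotopy sheaf of \(\omega^\infty(E\otimes \bfG_m^{\otimes j})\). Since \(\bfT \simeq \bfG_m \otimes S^1\), this is \(\pi_{i+j}\) of \(\omega^{\infty,gr}(E)^{j}\) (up to reindexing). Hence, for each \(j\), the map \(\omega^{\infty,gr}(\phi)^j\) is a map in \(\Shv_{Nis,\bfA^1}(\Smk,\mathrm{Sp})\) that induces isomorphisms on all Nisnevich homotopy sheaves.

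The one point that needs care is that such a map is an equivalence. The Nisnevich \(\infty\)-topos on \(\Smk\) is hypercomplete, since it has finite Krull dimension, and Nisnevich sheaves of spectra on \(\Smk\) are hypercomplete. So a map inducing isomorphisms on all homotopy sheaves is an equivalence; equivalently, the Postnikov t-structure on sheaves of spectra is left and right complete with conservative homotopy sheaves. We conclude that \(\omega^{\infty,gr}(\phi)\) is an equivalence, and Lemma \ref{lemma:jointly-conservative} then shows \(\phi\) is an equivalence. Since all the functors involved are exact, conservativity is the same as detecting zero objects, so checking on equivalences suffices.

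\emph{Second claim.} For \(\phi\) a map in \(\SHeff(k)\), take only \(j=0\). The hypothesis says \(\omega^\infty(\phi)\) induces isomorphisms on all \(\pi_i\)-sheaves, so by the same hypercompleteness argument \(\omega^\infty(\phi)\) is an equivalence. The second half of Lemma \ref{lemma:jointly-conservative} then finishes the proof.

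As an alternative route for the first claim, one could use that the homotopy t-structure is left and right complete together with the characterization of \(\SH(k)_{\geq 0}\) through the \(\pi_i(-)_j\). Vanishing of all \(\pi_i(\mathrm{cofib}\,\phi)_j\) places the cofiber in \(\bigcap_n \SH(k)_{\geq n}\) and in \(\bigcap_n \SH(k)_{\leq n}\), which is zero by completeness. I would, however, present the first argument, since it also covers the effective case.
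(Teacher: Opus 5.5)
Your proposal is correct and follows essentially the same route as the paper. For the first statement, both arguments combine the conservativity of \(\omega^{\infty,gr}\) from Lemma~\ref{lemma:jointly-conservative} with Postnikov completeness (hypercompleteness) of Nisnevich sheaves of spectra on \(\Smk\); your identification of \(\pi_i(E)_j\) with \(\pi_{i+j}\) of \(\omega^{\infty,gr}(E)^j\) spells out that step. For the effective statement you apply the second half of that lemma directly, whereas the paper cites \cite[Prop. 4]{Bachmann_GenSlicesKO} or reduces to the first statement via the retract \(\Sigma^\infty_+ X\otimes\bfG_m^{\otimes j}\) of \(\Sigma^\infty_+(X\times\bfG_m^{\times j})\). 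Your route is the more direct of the two.
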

\begin{proof}
    The first statement follows from Lemma \ref{lemma:jointly-conservative} and the fact that \(\Shv_{Nis, \bfA^1}(\Smk, \mathrm{Sp})\) is Postnikov complete. 
    The second statement is \cite[Prop. 4]{Bachmann_GenSlicesKO}, but can also be shown from what we just stated, using the fact that, for \(j \geq 0\), \(\Sigma^\infty_+ X \otimes \bfG_m^{\otimes j}\) is a retract of \(\Sigma^\infty_+( X \times \bfG_m^{\times j})\).
\end{proof}

We can compare \(\pi_0(E)_{\ast} \in \Shv_{Nis}(\Smk, Ab)^\bfZ\) with \(\omega^{\infty, gr}(E) \in \Shv_{Nis}(\Smk, Sp)^\bfZ\), and we should think of this analogously to what happens with abelian groups and spectra.

The heart \(\SH(k)^\heartsuit\) of \(\SH(k)\) is equivalent to the abelian category \(\bfH\bfI(k)\) of homotopy modules over \(k\) \cite[Theorem 5.2.6]{Morel_Introduction}, i.e. \(\bfZ\)-graded strictly \(\bfA^1\)-invariant\footnote{A sheaf of abelian groups \(A\) on \(\Smk\) is called strictly \(\bfA^1\)-invariant if \(H^i_{Nis}(X; A) \overset{\simeq}{\to} H^i_{Nis}(\bfA^1_X; A)\) is an equivalence for every \(X \in \Smk\) and every \(i\).} Nisnevich sheaves of abelian groups
\(M_\ast\) together with isomorphisms \(\mu_n : M_n \overset{\simeq}{\longrightarrow} \Omega_{\Gm}(M_{n+1})\) 
\,\footnote{For a sheaf of abelian groups \(A\), \(\Omega_{\Gm} A(-) \coloneq \mathrm{ker}(A(- \times \bfG_m) \overset{(id \times 1)^\ast}{\to} A(-))\), where \(1\) denotes the inclusion of the point \((z-1) \trianglelefteq k[z,z^{-1}]\) in \(\bfG_m\).}, 
for all \(n \in \bfZ\).
More precisely, there is a functor
\begin{equation*}
    \rmH: \bfH \bfI(k) \to \SH(k), \quad M_{\ast} \to \rmH M
\end{equation*}
which is fully faithful and induces an equivalence with the \(\SH(k)^{\heartsuit}\); its inverse is given by the functor \(E \mapsto \pi_0(E)_{\ast}\). 
Furthermore, by forgetting the bonding maps, we obtain a limit and colimit preserving conservative functor (see \cite[Lemma 1.2]{Bachmann_GenSlicesKO})
\begin{equation*}
    \SH(k)^\heartsuit \to \Shv_{Nis}(\Smk, \mathrm{Ab})^\bfZ, \quad E \mapsto \pi_0(E)_{\ast}.
\end{equation*}
\begin{remark}[\protect{\cite[Lemma 5.2.5]{Morel_Introduction}}]
    The homotopy sheaves \(\pi_i(E)_{\ast}\), for \(E \in \SH(k)\) and for all \(i\), are homotopy modules in a natural way.
\end{remark}
%

\subsection{Unramified Milnor--Witt K-theory}
We are interested in working with unramified cohomology theories\footnote{We are interested in unramified sheaves that are quotients of Milnor--Witt K-theory under suitable relations. Morel, in \cite{Morel_A1top}, called them unramified \(\K^{\bfR}\)-theories.
More precisely, we are interested uniquely in Milnor--Witt, Milnor, and Witt K-theory, and their quotient \(mod \; \ell\) for \(\ell\) prime.} as defined in \cite[Ch. 3.1, 3.2]{Morel_A1top}.
We are not recalling here the definition of unramified sheaves, which can be found in \cite[Ch. 2.1]{Morel_A1top}.
Loosely speaking, unramified sheaves on \(\Smk\) are those objects of \(\Shv_{Nis}(\Smk, \mathrm{Ab})\) whose values are determined by the behavior on codimension \(0\) (i.e., on and on fractions fields of smooth \(k\)-schemes) and codimension \(1\) points (i.e., on DVR that are stalks of the structure sheaf of a smooth scheme).
For \(\bfZ\)-graded sheaf of abelian groups, we will say a sheaf is unramified if all its components are.
For example, any strictly \(\bfA^1\)-invariant sheaf on \(\Smk\) is unramified, so every homotopy module is unramified (\cite[Ex. 3.2]{Morel_A1top}). 
The following definition is due to Hopkins and Morel. 
\begin{definition}[(Milnor--Witt K-theory of a field) \protect{\cite[Definition 3.1]{Morel_A1top}}]\label{def:KMW}
    Let \(F\) be a field.
    Then  the \emph{Milnor--Witt K-theory} of \(F\) the associative graded ring \(K^{MW}_{\ast}(F)\) generated by
    \begin{itemize}
        \item symbols \([u]\) for each unit \(u \in F^\times\), with degree \(1\);
        \item a single symbol \(\eta\), with degree \(-1\);
    \end{itemize}
    subject to relations
    \begin{itemize}
        \item (Steinberg relation) for each \(u \in F\setminus \{0,1\}\): \([u][1-u]=0\);
        \item (Logarithmic relation) for each pair \((u,v) \in (F^\times)^2\): \([uv]=[u]+[v]+\eta[u][v]\);
        \item (Centrality relation) for each \(u \in F^\times\): \([u]\eta=\eta[u]\);
        \item (Hyperbolic relation) let \(h \coloneq  \eta[-1]+2\) be the so-called hyperbolic element: \(\eta h=0\).
    \end{itemize}
\end{definition}

Unramified Milnor--Witt \(\bfK^{MW}_{\ast} \in \Shv_{Nis}(\Smk, \mathrm{Ab})^{\bfZ}\) is a strictly \(\bfA^1\)-invariant\footnote{It is proved in \cite[Lemma 3.37]{Morel_A1top} that \(\K^{MW}_n\) is the initial strongly, and consequently strictly \cite[Thm. 6.10]{bachmann2024stronglya1invariantsheavesafter}, \(\bfA^1\)-invariant sheaf of abelian groups under the sheaf of abelian groups freely generated by the pointed sheaf of sets \(\bfG^{\otimes n}_m\).} unramified sheaf of \(\bfZ\)-graded abelian groups\footnote{It is, in fact, a sheaf of \(\bfZ\)-graded rings.} whose sections on any finitely generated field \(F\) over \(k\) coincide with \(K^{MW}_{\ast}(F)\).
By \cite[Thm. 3.15]{Morel_A1top} (and similarly to \cite{Milnor_AlgebraicKQuadraticForms}), for any discrete valuation \(\nu\) on a field \(F\) (trivial on \(k\)), with valuation ring \(\calO_{\nu} \subset F\), uniformizing element \(\varpi\), and residue field \(\kappa(\nu)\), there exists a unique morphism of graded groups, which we call residue homomorphism, 
\begin{equation*}
    \partial^{\varpi}_{\nu}: K^{MW}_{\ast}(F) \rightsquigarrow K^{MW}_{\ast-1}(\kappa(\nu))
\end{equation*}
which commutes with products by \(\eta\) and satisfies both
\begin{equation*}
    \partial^{\varpi}_{\nu}([\varpi][u_2]\cdots[u_n])=[\overline{u_2}]\cdots[\overline{u_n}]
\end{equation*}
and 
\begin{equation*}
    \partial^{\varpi}_{\nu}([u_1]\cdots[u_n])=0
\end{equation*}
for any units \(u_1, \dots, u_n\) of \(\calO_{\nu}\), where \(\overline{u}_i\) is the residue class of \(u_i\). 
This allows us to define 
\begin{equation}\label{eq:unramifiedness-def-valuations}
    K^{MW}_{\ast}(\calO_{\nu}) \coloneq  \ker (K^{MW}_{\ast}(F) \overset{\partial^{\varpi}_{\nu}}{\rightsquigarrow} K^{MW}_{\ast-1}(\kappa(\nu))),
\end{equation}
and for \(X \in \Smk\) irreducible
\begin{equation}\label{eq:unramifiedness-def-schemes}
    K^{MW}_{\ast}(X)=\cap_{x \in X^{(1)}} K^{MW}_{\ast}(\calO_{X,x}) \hookrightarrow K^{MW}_{\ast}(\mathrm{Frac}(\calO_{X}(X))).
\end{equation}
The residue morphism \(\partial^{\varpi}_{\nu}\) depends on the choice of \(\varpi\).\footnote{In contrast to what happens on Milnor K-theory where the residue homomorphism depends only on the valuation and not on the choice of a uniformizer.}
Indeed, as in \cite[Rmk. 3.20]{Morel_A1top}, for any \(u \in \calO_{\nu}^{\times}\) we get \(\partial^{\varpi}_{\nu}([u \cdot \varpi])=1 + \eta [\overline{u}]\). 
However, by \cite[Lemma 3.19]{Morel_A1top}, the kernel depends exclusively on \(\nu\).
\begin{remark}
    One can also define \emph{symbolic} (as opposed to unramified) Milnor--Witt K-theory as in \cite{Gille_Scully_Zhong} and \cite{bachmann2024stronglya1invariantsheavesafter}, by taking Definition \ref{def:KMW} and making it work for all local rings.
    There is a map from naive Milnor--Witt K-theory to unramified Milnor--Witt K-theory which is an isomorphism not only when taking sections on a field, but also as sheaves when the base field is infinite of characteristic \(\neq 2\).
    The infinitude of \(k\) is known to be necessary, for reasons similar to the Milnor K-theory version of statement \cite{Kerz-Milnor-FiniteResidueFields}.
\end{remark}
\begin{theorem}[(Morel) \cite{Morel_MotivicPi0}, \protect{\cite[Thm. 6.40]{Morel_A1top}}] \label{thm:Motivic_Pi0_SphereSpectrum}
    Let \(k\) be a perfect field and \(\bfS = \Sigma^{\infty}_+ \Spec(k)\) (the tensor-unit of \(\SH(k)\)).
    Then there exists a canonical morphism \(\K^{MW}_{\ast} \to \pi_0(\bfS)_{\ast}\) in \(\Shv_{Nis}(\Smk, \mathrm{Ab})^{\bfZ}\).
\end{theorem}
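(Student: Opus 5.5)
The canonical morphism will come from the universal property of \(\K^{MW}_n\) recalled in the footnote above: \(\K^{MW}_n\) is the initial strictly \(\bfA^1\)-invariant sheaf of abelian groups under \(\bfZ[\bfG_m^{\wedge n}]\), for \(n \geq 1\). So it suffices, for each \(n\), to produce a pointed map \(\bfG_m^{\wedge n} \to \pi_0(\bfS)_n\), and then to check compatibility across degrees, including \(n \leq 0\) and the action of \(\eta\). Since every homotopy sheaf \(\pi_i(E)_j\) is strictly \(\bfA^1\)-invariant (Morel's stable connectivity theorem together with the remark that these are homotopy modules), the target is of the required type. The map \(\bfG_m^{\wedge n} \to \pi_0(\bfS)_n = \pi_0\Map(\Sigma^\infty_+(-),\bfG_m^{\otimes n})\) is simply the one induced by \(\Sigma^\infty\). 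A section \((u_1,\dots,u_n)\) of \(\bfG_m^{\times n}\) over \(X\) gives \(\Sigma^\infty_+ X \to \Sigma^\infty \bfG_m^{\wedge n} = \bfG_m^{\otimes n}\). Explicitly, \([u]\) goes to the class of \(u\colon \Sigma^\infty_+ X \to \bfG_m\), and \(\eta\) goes to the stable Hopf map \(\bfG_m \to \bfS\), the desuspension of \(\bfA^2\setminus 0 \to \bfP^1\).

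For \(n \geq 1\), this gives \(\K^{MW}_n \to \pi_0(\bfS)_n\) by the universal property. In negative degrees, \(\K^{MW}_{-m}\) is generated by \(\eta^m\) (the Witt sheaf \(\mathbf W\)), so I would instead build the map through contraction. Since \(\K^{MW}_{n} \simeq \Omega_{\Gm}\K^{MW}_{n+1}\) and \(\pi_0(\bfS)_n \simeq \Omega_{\Gm}\pi_0(\bfS)_{n+1}\) (the homotopy-module structure), applying \(\Omega_{\Gm}^{\,k}\) to the positive-degree maps yields the maps in every degree. Compatibility then needs the two contraction isomorphisms to correspond. On \(\K^{MW}\) the contraction is \(\alpha \mapsto \eta\cdot\)(residue-type operation); on the target it is the bonding map. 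Showing these agree amounts to the identification that \(\Omega_{\Gm}\) of the map \(u \mapsto [u]\) is multiplication by \(\eta\) under Hopf. I would check this on fields, using unramifiedness, so that both sides are determined by their values on finitely generated fields over \(k\) and on DVRs.

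The main obstacle is verifying that the resulting graded map respects the ring relations. Concretely, this means the Steinberg relation \([u][1-u]=0\) in \(\pi_0\), the logarithmic relation \([uv]=[u]+[v]+\eta[u][v]\), and \(\eta h=0\). For \(n \geq 1\) the universal property handles the additive structure automatically. However, identifying the map with a multiplicative one, and controlling \(\eta\), requires genuine geometric input:
\begin{itemize}
\item the Steinberg relation follows from Hu–Kriz's cofiber argument (\(\bfA^1\setminus\{0,1\}\) and the join \(\bfG_m * \bfG_m\));
\item the logarithmic relation comes from analyzing the multiplication \(\bfG_m\times\bfG_m\to\bfG_m\) stably via the Hopf construction;
\item \(\eta h = 0\) comes from the computation of the twist on \(\bfG_m\wedge\bfG_m\) as \(-\langle -1\rangle\) together with \(\eta\) being invariant under it.
\end{itemize}
If these prove too costly, I would restrict the claim to the existence of the map, which only needs the universal property and strict \(\bfA^1\)-invariance. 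Only canonicity is asserted in the statement, so the multiplicative identifications can be postponed.
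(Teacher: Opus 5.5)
Your proposal is correct and follows essentially the route behind the paper's citation. The paper gives no argument of its own, but its footnote records exactly the universal property of \(\K^{MW}_n\) for \(n\geq 1\) that you apply to the canonical pointed map \(\bfG_m^{\wedge n}\to\pi_0(\bfS)_n\); nonpositive degrees then come from contraction, and the relation-checking you list is Morel's original field-by-field route, which the universal property makes unnecessary here.

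One correction: the contraction isomorphism \(\K^{MW}_n\cong\Omega_{\Gm}\K^{MW}_{n+1}\) is \(\alpha\mapsto\alpha\cdot[t]\) and involves no \(\eta\). Compatibility with the bonding maps of \(\pi_0(\bfS)_\ast\) in degrees \(n\geq 1\) follows from uniqueness in the universal property, by comparing both maps on symbols \([u_1]\cdots[u_n]\). For the bare graded morphism the statement asks for, this compatibility is not even needed.
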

\begin{corollary}
    Let \(k\) be a perfect field.
    \(\K^{MW}_{\ast}\) is a homotopy module and hence there is a motivic spectrum \(\rmH\K^{MW} \in \SH(k)^{\heartsuit}\) such that 
    and \(\pi_0(\rmH\K^{MW})_{\ast} \simeq \K^{MW}_{\ast}\).
    There is an equivalence  \(\rmH\K^{MW} \overset{\simeq}{\to} \pi_0 \bfS \in \SH(k)^{\heartsuit}\), where \(\pi_0 \bfS\) denotes the truncation \(\tau_{\leq 0} \tau_{\geq 0} \bfS\) in the homotopy t-structure.
\end{corollary}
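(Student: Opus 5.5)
The plan is to deduce both claims from Theorem \ref{thm:Motivic_Pi0_SphereSpectrum} together with the identification of the heart \(\SH(k)^\heartsuit \simeq \bfH\bfI(k)\). For the first claim I will show that the comparison map \(\K^{MW}_\ast \to \pi_0(\bfS)_\ast\) is an isomorphism of \(\bfZ\)-graded Nisnevich sheaves; this is the content of Morel's theorem in \cite{Morel_MotivicPi0} and \cite[Thm. 6.40]{Morel_A1top}. Granting this, \(\K^{MW}_\ast\) inherits the structure of a homotopy module from \(\pi_0(\bfS)_\ast\), by the Remark following Lemma 5.2.5 of \cite{Morel_Introduction}. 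Here the bonding isomorphisms \(\mu_n : \K^{MW}_n \simeq \Omega_{\Gm}\K^{MW}_{n+1}\) are transported along the isomorphism, and strict \(\bfA^1\)-invariance is automatic. Alternatively, one can check the homotopy module structure directly: strict \(\bfA^1\)-invariance is recorded in the footnote, via \cite[Lemma 3.37]{Morel_A1top} and \cite[Thm. 6.10]{bachmann2024stronglya1invariantsheavesafter}, and the contraction isomorphism \((\K^{MW}_{n+1})_{-1} \simeq \K^{MW}_n\) is given by the residue map \(\partial^{\varpi}_\nu\) applied to \([t]\cdot(-)\) on \(\bfG_m\).

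Next I set \(\rmH\K^{MW} := \rmH(\K^{MW}_\ast)\), using the fully faithful functor \(\rmH : \bfH\bfI(k) \to \SH(k)\). Since the inverse equivalence onto the heart is \(E \mapsto \pi_0(E)_\ast\), this gives \(\pi_0(\rmH\K^{MW})_\ast \simeq \K^{MW}_\ast\) immediately.

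For the final equivalence, I note that \(\pi_0\bfS = \tau_{\le0}\tau_{\ge0}\bfS\) lies in the heart, and its associated homotopy module is \(\pi_0(\bfS)_\ast\). Applying \(\rmH\) to the isomorphism \(\K^{MW}_\ast \to \pi_0(\bfS)_\ast\) of homotopy modules then produces a map \(\rmH\K^{MW} \to \pi_0\bfS\) in \(\SH(k)^\heartsuit\). This map is an equivalence because \(\rmH\) is an equivalence onto the heart. Conservativity of \(E\mapsto\pi_0(E)_\ast\) on the heart, from \cite[Lemma 1.2]{Bachmann_GenSlicesKO}, gives the same conclusion.

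The main obstacle is that Theorem \ref{thm:Motivic_Pi0_SphereSpectrum} as stated above only asserts the existence of a canonical morphism, not that it is an isomorphism or that it is compatible with the \(\Omega_{\Gm}\)-structures. I will therefore invoke the full form of Morel's theorem. This amounts to checking that the map is an isomorphism on finitely generated field sections, which suffices because both sides are unramified sheaves. On fields it is the computation \(\pi_0(\bfS)_n(F) \simeq K^{MW}_n(F)\). Compatibility with contractions holds because the map is induced by the symbols \([u] : \bfG_m \to \bfS\otimes\bfG_m\) and by \(\eta\), and these commute with \(\Omega_{\Gm}\).
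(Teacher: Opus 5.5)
Your proposal is correct and follows the paper's implicit route. The corollary is read off from Morel's theorem in its full form, i.e.\ that $\K^{MW}_\ast \to \pi_0(\bfS)_\ast$ is an isomorphism (you rightly note that the paper's statement of Theorem~\ref{thm:Motivic_Pi0_SphereSpectrum} only records a morphism), combined with the equivalence $\rmH$ between $\bfH\bfI(k)$ and $\SH(k)^\heartsuit$. The only difference is that the paper's Remark~\ref{rmk:contracting-isos-KMW} checks the $\Omega_{\Gm}$-isomorphisms directly, via the splitting $\bfK^{MW}_{n+1}(X\times\bfG_m)\cong\bfK^{MW}_{n+1}(X)\oplus\bfK^{MW}_{n}(X)\cdot[t]$, rather than by transport of structure; this is what your residue-map alternative amounts to.
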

\begin{remark}\label{rmk:contracting-isos-KMW}
    We can explicitly show that \(\K^{MW}_{\ast}\) is an homotopy module. 
    Let \(k\) be a perfect field.
    As previously mentioned \(\bfK^{MW}_{\ast}\) is a strictly \(\bfA^1\)-invariant sheaf of \(\bfZ\)-graded abelian groups on \(\Smk\).
    We just need to produce an isomorphism (of sheaves of abelian groups) between \(\bfK^{MW}_{n}\) and \(\Omega_{\bfG_m}(\bfK^{MW}_{n+1})\).
    As in \cite[Ch. 2 Lemma 3.1.8]{Bachmann_MWMotives}, for each \(X \in \Smk\), \(i \geq 0\), and \(n \in \bfZ\) we have an isomorphism of \(\bfK^{MW}_0(X)\)-modules
    \begin{equation*}
        H^i_{Nis}(X \times \mathbb{G}_m, \mathbf{K}^{MW}_j) \cong H^i_{Nis}(X, \mathbf{K}^{MW}_j) \oplus H^i_{Nis}(X, \mathbf{K}^{MW}_{j-1}) \cdot [t].
    \end{equation*}
    There is also an exact sequence of modules
    \begin{equation*}
        0 \to \bfK^{MW}_{n+1}(X) \longrightarrow \bfK^{MW}_{n+1}(X)(X \times \bfG_m) \longrightarrow \bfK^{MW}_{n}(X) \to 0
    \end{equation*}
    which has a splitting given by 
    \begin{equation*}
        \bfK^{MW}_{n}(X) \overset{\cdot [t]}{\longrightarrow} \bfK^{MW}_{n+1}(X) \overset{\simeq}{\longrightarrow} \bfK^{MW}_{n+1}(X \times \bfA^1_k) \overset{(\bfG_m \hookrightarrow \bfA^1_k)^\ast}{\longrightarrow}  \bfK^{MW}_{n+1}(X \times \bfG_m).
    \end{equation*}
    It follows that the kernel of the morphism \(\bfK^{MW}_{n+1}(X \times \bfG_m) \overset{(id \times 1)^\ast}{\longrightarrow} \bfK^{MW}_{n+1}(X)\) is exactly \(\bfK^{MW}_{n}(X)\), since \([t]\) is mapped to \([1]\) (and \([1]=0\) by \cite[Lemma 3.5.b]{Morel_A1top}).
\end{remark}
\begin{remark}
    Since every motivic spectrum is a \(\bfS\)-module (as \(\bfS\) is the tensor-unit of \(\SH(k)\)), it follows that every homotopy module (they are all of the form \(\pi_0(E)_{\ast}\) for some motivic spectrum \(E\)) carries a canonical structure of a module over \(\K^{MW}_{\ast}\).
\end{remark}
Following \cite[Ch. 3.2]{Morel_A1top}, we can define homotopy modules of Milnor K-theory and Witt K-theory as quotients of Milnor-Witt K-theory by a set of ``admissible'' relations.
\begin{definition}\label{def:admissible-relations-on-KMW}
    A set \(\calR\) of admissible relations on \(\K^{MW}_{\ast}\) is the datum of graded ideals \(\calR_{\ast}(F)\) of \(K^{MW}_{\ast}(F)\) for each finitely generated field \(F\) over \(k\) satisfying the following:
    \begin{itemize}
        \item For any extension \(F/E\) of finitely generated fields over \(k\), the map \(K^{MW}_{\ast}(E) \to K^{MW}_{\ast}(F)\) maps  \(\calR_{\ast}(E)\) into \(\calR_{\ast}(F)\).
        \item For any valuation \(\nu\) on a finitely generated field \(F\) over \(k\), and any uniformizer element \(\varpi\), the residue morphism \(\partial^{\varpi}_{\nu}\) maps \(\calR_{\ast}(F)\) into \(\calR_{\ast}(\kappa(\nu))\).
        \item For any finitely generated field \(F\) over \(k\), there is a short exact sequence 
            \begin{equation*}
                0 \longrightarrow \calR_{\ast}(F) \longrightarrow \calR_{\ast}(F(T)) \overset{\sum \partial^P_{D_P}}{\longrightarrow} \oplus \calR_{\ast-1}(F(T)/P) \longrightarrow 0.
            \end{equation*}
    \end{itemize}
\end{definition}
\noindent
By \cite[Thm. 2.46]{Morel_A1top}, these conditions are enough to obtain an homotopy module \(\K^{MW}_{\ast}/\calR_{\ast}\) whose value on a field \(F\) is given by \(K^{MW}_{\ast}(F)/\calR_{\ast}(F)\).
The next lemma is enough to obtain all kinds of admissible relations we are interested in. 
\begin{lemma}[\protect{\cite[Thm. 2.46, Lemma 3.32]{Morel_A1top}}]\label{lemma:quotient-by-admissible-relations}
    Let \(k\) be a perfect field.
    Let \(R_{\ast}\) be a graded ideal of \(K^{MW}_{\ast}(k)\).
    For any field \(F\) finitely generated over \(k\), we denote by \(\calR_{\ast}(F)\) the graded ideal \(R_{\ast} \cdot K^{MW}(F)\) of \(K^{MW}(F)\).
    \(R_{\ast}(F)\) is an admissible set of relations on \(K^{MW}(F)\). 
    In particular, there is an homotopy module \(K^{MW}_{\ast}/\calR_{\ast}\) whose value on F is given by \(K^{MW}_{\ast}(F)/\calR_{\ast}(F)\).
\end{lemma}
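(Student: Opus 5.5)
We have to verify the three axioms of Definition \ref{def:admissible-relations-on-KMW} for \(\calR_{\ast}(F) = R_{\ast}\cdot K^{MW}_{\ast}(F)\); the existence of the homotopy module \(\K^{MW}_{\ast}/\calR_{\ast}\) then follows from \cite[Thm. 2.46]{Morel_A1top} as recalled above. The guiding principle is that elements of \(R_\ast\) come from the base field \(k\). They are therefore unramified at every valuation trivial on \(k\), and they commute with everything because of the centrality of \(\eta\) and \(\epsilon\)-graded commutativity. Hence the residue maps behave \(R_\ast\)-linearly.

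\textbf{Functoriality.} This is immediate. The map \(K^{MW}_\ast(E)\to K^{MW}_\ast(F)\) is a ring homomorphism compatible with the maps from \(K^{MW}_\ast(k)\), so it sends \(R_\ast\cdot K^{MW}_\ast(E)\) into \(R_\ast\cdot K^{MW}_\ast(F)\).

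\textbf{Residues.} Let \(r\in R_n\), which lies in \(K^{MW}_n(k)\subset K^{MW}_n(\calO_\nu)\), and let \(x\in K^{MW}_\ast(F)\). I will use Morel's twisted Leibniz formula \(\partial^{\varpi}_\nu(r\cdot x)=\epsilon^{n}\,\overline{r}\cdot\partial^{\varpi}_\nu(x)\), where \(\epsilon=-\langle -1\rangle\). It can be checked on generators, since \(\partial^\varpi_\nu\) is \(K^{MW}_\ast(\calO_\nu)\)-semilinear in this twisted sense \cite[Prop. 3.17]{Morel_A1top}. Because \(\epsilon^n\overline r\) lies in \(R_\ast\cdot K^{MW}_\ast(\kappa(\nu))\), the residue of an element of \(\calR_\ast(F)\) lies in \(\calR_\ast(\kappa(\nu))\). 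This holds for every uniformizer, since changing the uniformizer only multiplies by a unit of \(K^{MW}_0\).

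\textbf{Exact sequence.} Here I use Morel's Milnor-type exact sequence \cite[Thm. 3.24]{Morel_A1top}:
\begin{equation*}
0\to K^{MW}_\ast(F)\to K^{MW}_\ast(F(T))\xrightarrow{\sum\partial^P}\bigoplus_P K^{MW}_{\ast-1}(F[T]/P)\to 0 .
\end{equation*}
This sequence is split, and its splitting maps are \(K^{MW}_\ast(F)\)-linear up to the \(\epsilon\)-twist. I tensor it, or more precisely multiply it, by the ideal \(R_\ast\).

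Surjectivity of the residue map on \(\calR\) holds because any \(r\cdot y\) in the target is the residue of \(\epsilon^{\pm n} r\cdot s(y)\), where \(s\) is the splitting.

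For exactness in the middle, suppose \(z=\sum r_i x_i\in\calR_\ast(F(T))\) has all residues zero. Then \(z\) lies in \(K^{MW}_\ast(F)\), and I must show \(z\in R_\ast K^{MW}_\ast(F)\). For this I use the explicit retraction \(K^{MW}_\ast(F(T))\to K^{MW}_\ast(F)\), namely the residue at infinity \(-\partial_\infty^{1/T}\) composed with multiplication by \([T]\) (or, equivalently, specialization at \(T=\infty\) after a suitable twist), which is \(R_\ast\)-linear by the previous step. Write \(\rho\) for this retraction. Then \(z=\rho(z)=\sum \pm r_i\rho(x_i)\in\calR_\ast(F)\). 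Injectivity is clear.

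\textbf{Main obstacle.} The delicate point is this last step: making sure the retraction onto \(K^{MW}_\ast(F)\) is \(R_\ast\)-linear up to units, given the dependence on uniformizers and the \(\epsilon\)-twists. I would handle it by taking \(\rho\) to be \(x\mapsto \partial^{-1/T}_\infty([-1/T]\,x)\), or its appropriately signed variant, and checking linearity via the Leibniz formula from the residue step.
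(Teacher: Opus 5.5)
Your proof is correct and follows essentially the same route as the source the paper relies on; the paper gives no argument of its own and only cites Morel, Lemma 3.32 and Thm. 2.46. Like that source, you check the three admissibility axioms using the twisted $K^{MW}_*(k)$-linearity of the residue maps and the $K^{MW}_*(F)$-linear splitting of Morel's Milnor--Witt exact sequence, and then invoke Thm. 2.46.

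One small point on your first candidate retraction: since $[T]=\epsilon[1/T]$, the map $x\mapsto -\partial^{1/T}_\infty([T]x)$ restricts to multiplication by $\langle -1\rangle$ on $K^{MW}_*(F)$, so it is a retraction only up to that unit. The map you settle on, $x\mapsto \partial^{-1/T}_\infty([-1/T]x)$, is an honest $K^{MW}_*(F)$-linear retraction. Either one suffices, because $\mathcal{R}_*(F)$ is an ideal.
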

\begin{definition}
    Let \(k\) be a perfect field.
    We define the Milnor K-theory and Witt K-theory homotopy modules using Lemma \ref{lemma:quotient-by-admissible-relations} as quotients of Milnor--Witt K-theory \(\bfK^{MW}_{\ast}\):
    \begin{equation*}
       \K^{M}_{\ast} \coloneq  \K^{MW}/(\eta)
       \quad \text{and} \quad
        \K^{W}_{\ast} \coloneq \K^{MW}/(h),
    \end{equation*}
    where \((\eta)\) and \((h)\) are graded ideals of \(K^{MW}_{\ast}(k)\) generated by the corresponding element.
    Using the functor \(\rmH: \bfH \bfI(k) \to \SH(k)\), we obtain motivic spectra
    \begin{equation*}
        \rmH \bfK^M \quad \text{and} \quad \rmH \bfK^{W} \in \SH(k)^{\heartsuit}.
    \end{equation*}
\end{definition}
\begin{remark}
    For a finitely generated fields \(F\) over \(k\), the value of \(\K^{M}_{\ast}(F)= K^{MW}_{\ast}(F)/(\eta)\) coincides with the classical definition of Milnor K-groups as defined in \cite{Milnor_AlgebraicKQuadraticForms}, i.e., \(\mathrm{Tens}_\ast(F^\times)/ (a \otimes (1-a) \colon a \in F \setminus \{0,1\})\). 
\end{remark}
\noindent For a finitely generated field \(F\) over \(k\), we have maps of \(\bfZ\)-graded algebras 
\begin{equation*}
q_M(F): K^{MW}_\ast(F) \to K^{M}_\ast(F) \quad \text{and} \quad q_W(F): K^{MW}_\ast(F) \to K^{W}_\ast(F),
\end{equation*}
called \emph{forgetful maps}, which are the quotient maps by \((\eta)\) and \((h)\), respectively.
We want to lift these maps to maps of unramified sheaves, i.e., natural transformations of the underlying functors.
We do this for \(q_M\) using the fact that \(\bfK^{MW}_\ast\) is unramified; the case for \(q_W\) is the same.
Under the conditions of \eqref{eq:unramifiedness-def-valuations}, there is a map \(q_M(\calO_\nu): \bfK^{MW}_{\ast}(\calO_{\nu}) \to \bfK^{M}_{\ast}(\calO_{\nu}) \) simply because these two \(\bfZ\)-graded algebras are defined as kernels.
Under the conditions of \eqref{eq:unramifiedness-def-schemes}, the map \(q_M(X): \bfK^{MW}_{\ast}(X) \to \bfK^{M}_{\ast}(X)\) exists because the intersection in \eqref{eq:unramifiedness-def-schemes} is a limit condition.
These natural transformations \(q_M\) and \(q_W\) give maps of motivic spectra
    \begin{equation*}
        \mathfrak{q}_M \coloneq \rmH(q_M): \rmH \bfK^{MW} \to \rmH \bfK^{M} \quad \text{and} \quad \mathfrak{q}_W \coloneq \rmH(q_W): \rmH \bfK^{MW} \to \rmH \bfK^{W}.
    \end{equation*}
    These are the maps appearing in Theorem \ref{thm:Morel-str-conj} and Theorem \ref{thm:Morel-str-conj-2}, respectively.   
\begin{remark}\label{rmk:contracting-isos-KM-and-KW}
    Similarly to Remark \ref{rmk:contracting-isos-KMW}, we can construct the contracting isomorphisms explicitly for \(\bfK^{M}_{\ast}\) and \(\bfK^{W}_{\ast}\) using Bass-Tate-type sequences.
\end{remark}

Now that we have constructed our these various version of K-theor, we can construct various versions of  motivic cohomology.
\begin{definition}
    We define the following objects in \(\SH(k)\), representing different variants of motivic cohomology, as motivic spectra:
    \begin{itemize}
        \item \emph{Motivic Cohomology} 
            \begin{equation*}
                \HZ \coloneq s^0(\bfS).
            \end{equation*}
        \item \emph{Milnor-Witt Motivic Cohomology}
            \begin{equation*}
                \HZtilde \coloneq f^0(\rmH\K^{MW}).
            \end{equation*}
        \item \emph{Witt Motivic Cohomology}
            \begin{equation*}
                \HWZ \coloneq f^0(\rmH\K^{W}).
            \end{equation*}
    \end{itemize}
\end{definition}
\begin{remark}[(On the definition of motivic cohomology)]
    There are equivalences \(\HZ=s^0(\bfS) \to s^0(\rmH \K^{M})\) and  \(f^0(\rmH \K^{M}) \to s^0(\rmH \K^{M})\), where the first map is given by applying the \(0\)-effective slice to the composition \(\bfS \to \pi_0(\bfS)_{\ast} \overset{\simeq}{\longrightarrow} \K^{MW}_{\ast} \to \K^{M}_{\ast}\) and the second map is the canonical map of the cofiber sequence \(f^1 \to f^0 \to s^0\).
    A proof of this can be found, for example, in \cite[Lemma 12]{Bachmann_GenSlicesKO}.\\
    The map \(\bfS \to \bfK\bfG\bfL\) induces an equivalence after applying \(s^0\) (due to \cite{Levine_Coniveau}[Sections 6.4 and 9]).
    Our definition \emph{agrees} with the classical definition by Voevodsky in \cite[Thm. 19.1]{Voevodsky_LectureNotes}.
    By \cite[Thm. 6.5.1, Thm. 9.0.3]{Levine_Coniveau}, our definition \emph{agrees} with the definition in terms of Bloch's higher Chow groups.
\end{remark}
\begin{definition}
     Let \(n \geq 1\). We define the \emph{mod \(n\) Milnor} 
     \(K\)-theory motivic spectrum as the cofiber in \(\SH(k)\)
     \begin{equation*}
         \rmH \bfK^{M}/n \coloneq \mathrm{cof}(\rmH \bfK^{M} \overset{\cdot n}{\longrightarrow} \rmH \bfK^{M}).
     \end{equation*}
     Similarly, we define the \emph{mod \(n\) Motivic Cohomology} as the cofiber 
     \begin{equation*}
         \HZ/n \coloneq \mathrm{cof}(\HZ \overset{\cdot n}{\longrightarrow} \HZ).
     \end{equation*}
\end{definition}
\begin{remark}
    \(f^0\) is an exact, colimit-preserving functor; applying it to the cofiber sequence \(\rmH \K^{M} \overset{n \cdot}{\to }\rmH \K^{M} \to \rmH \K^{M} /n\), we obtain an equivalence
    \begin{equation*}
        f^0(\rmH \K^{M}/n) \to f^0(\rmH \K^{M})/n = \HZ/n
    \end{equation*}
    for every \(n \geq 1\).

\end{remark}
\begin{remark}
    There is another notion of mod \(n\) Milnor \(K\)-theory, which is given by taking the cokernel in the abelian category \(\SH(k)^{\heartsuit}\), instead of the cofibers in \(\SH(k)\). The construction can be done also by taking \((\eta, n)\) as a set of admissible relations.\\
    By \cite{Izhboldin}[Thm. A], for any field \(F\) of characteristic \(p\), \(K^M_\ast(F)\) is \(p\)-torsion free. 
    In particular, the map \(K^M_\ast(F) \overset{\cdot p}{\longrightarrow} K^M_\ast(F)\) is injective. 
    If we fix \(k\) perfect field of characteristic \(p\), since the map is injective on fields, the map on homotopy modules \(\rmH\bfK^{M}\overset{\cdot p}{\longrightarrow} \rmH\bfK^{M}\) has trivial kernel. Consequently, \(\rmH\bfK^{M}/p\) is both the cokernel in \(\SH(k)^{\heartsuit}\) and the cofiber in \(\SH(k)\).
\end{remark}
\begin{remark}\label{rmk:connectiveness-motivic-cohomology}
    By \cite[Prop. 4]{Bachmann_GenSlicesKO} the functor \(f^0\) is right t-exact for the homotopy t-structure.
    Thus, all our motivic cohomologies are connective for the Morel's t-structure.
\end{remark}
\begin{theorem}[(Nesterenko-Suslin-Totaro) \protect{\cite{NesterenkoSuslin}, \cite{Totaro_MilnorKtheory}}]\label{thm:MilnorLine}
    Let \(F\) be a field.
    There is an equivalence of graded rings \(K^{M}_\ast(F) \to H^\ast(F, \bfZ(\ast))\).
\end{theorem}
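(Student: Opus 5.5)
The plan is to build the comparison map in degree \(n=\ast\) from the symbol description of Milnor K-theory, check the relations, and then prove bijectivity by a transfer argument together with induction on the transcendence degree. Here \(H^n(F,\bfZ(n))\) denotes motivic cohomology of \(\Spec F\), computed for instance by Bloch's higher Chow groups \(CH^n(F,n)\), as allowed by the remark on the definition of motivic cohomology.

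The map \(K^M_\ast(F)\to H^\ast(F,\bfZ(\ast))\) is constructed in four steps.

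\begin{itemize}
\item \emph{Degree one.} We use the identification \(H^1(F,\bfZ(1))\cong \bfG_m(F)=F^\times\). Concretely, \(\bfZ(1)\simeq \bfG_m[-1]\), or, on the cycle side, a unit \(a\) corresponds to the point \(a\in \square^1_F=\bfP^1\setminus\{1\}\).
\item \emph{Multiplicativity.} Since \(\bigoplus_n H^n(F,\bfZ(n))\) is a graded-commutative ring under the cup product, the degree-one map extends multiplicatively to the tensor algebra \(\mathrm{Tens}_\ast(F^\times)\).
\item \emph{Steinberg relation.} We must show \([a]\cup[1-a]=0\) for \(a\neq 0,1\). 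This follows from the standard curve argument: the cycle \(\{(t,1-t)\}\subset\square^2\), or equivalently the map \(\bfA^1\setminus\{0,1\}\to \bfG_m\wedge\bfG_m\), is null in motivic cohomology. In stable language one may instead invoke Hu–Kriz's or Hoyois's result that \(\bfG_m\wedge\bfG_m\) receives a nullhomotopic Steinberg map.
\item \emph{Sign relation.} The relation \([a][-a]=0\) then follows formally from the Steinberg relation.
\end{itemize}

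Surjectivity is proved by a norm argument. Using higher Chow groups, every class in \(CH^n(F,n)\) is represented by a sum of closed points of \(\square^n_F\). A closed point with residue field \(L\), where \(L/F\) is finite, is the pushforward of an \(L\)-rational point \((a_1,\dots,a_n)\). That rational point is the image of the symbol \(\{a_1,\dots,a_n\}\in K^M_n(L)\). Hence the image of the map contains \(\sum N_{L/F}\) of symbols. Since the Bass–Tate/Kato norms on Milnor K-theory are compatible with pushforward of cycles, the map is surjective.

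Injectivity is the main obstacle. The plan is to construct an inverse \(\psi: CH^n(F,n)\to K^M_n(F)\) by sending a closed point \(x\) with residue field \(L\) to \(N_{L/F}\{a_1,\dots,a_n\}\), where the \(a_i\) are its coordinates. One then has to check that \(\psi\) kills boundaries of cycles in \(\square^{n+1}\) meeting the faces properly. This reduces to Weil reciprocity for Milnor K-theory, namely the vanishing of the sum of tame symbols/norms over all closed points of a curve. That statement is the Suslin reciprocity law, which rests on the well-definedness of Kato's norm, independent of the choice of generators; I take it as the key input. Given \(\psi\), the composite \(\psi\circ\phi=\mathrm{id}\) holds on symbols by construction, and \(\phi\circ\psi=\mathrm{id}\) holds by the surjectivity description above. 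Therefore \(\phi\) is an isomorphism of graded rings.
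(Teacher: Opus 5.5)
The paper does not prove this statement; it is quoted from Nesterenko--Suslin, whose route goes through homology of general linear groups, and from Totaro, and used as a black box. Your outline follows Totaro's higher Chow group argument, and most of its ingredients are right:
\begin{itemize}
\item the symbol map \(\phi\);
\item the Steinberg relation, where what you actually need is that the point \((a,1-a)\in\square^2_F\) is a boundary, e.g.\ of the curve \(t\mapsto(t,\tfrac{t-a}{t-1},1-t)\) in \(\square^3_F\);
\item the norm map \(\psi\), whose vanishing on boundaries reduces to Suslin's reciprocity law.
\end{itemize}
Note that once \(\psi\) is well defined, \(\psi\circ\phi=\mathrm{id}\) gives injectivity at once. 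So injectivity is not where the remaining difficulty lies.

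The gap is the sentence ``the Bass--Tate/Kato norms on Milnor K-theory are compatible with pushforward of cycles.'' You need the identity \(\pi_{L/F,*}\circ\phi_L=\phi_F\circ N_{L/F}\) on \(K^M_n(L)\), and it cannot be quoted independently of the theorem you are proving. Given \(\psi\circ\phi=\mathrm{id}\), it is equivalent both to the surjectivity of \(\phi\) and to \(\phi\circ\psi=\mathrm{id}\). So your surjectivity step and your final step both assume exactly what is left to prove. Writing a closed point as the pushforward of a rational point over its residue field is trivial; the content is that this pushforward lies in \(\phi_F(K^M_n(F))\).

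One way to supply the argument is the following.
\begin{enumerate}
\item Reduce to a simple extension \(L=F(\alpha)\), using transitivity of norms (Kato) and functoriality of pushforward.
\item Write \(a_i=g_i(\alpha)\) with \(\deg g_i<d=[L:F]\), and let \(p\) be the minimal polynomial of \(\alpha\). Compute \(N_{L/F}\{a_1,\dots,a_n\}\) through the Bass--Tate/Milnor exact sequence for \(K^M_{n+1}(F(t))\), applied to the lift \(\{p,g_1,\dots,g_n\}\).
\item Realize the same relation among zero-cycles as the boundary of a suitable rational curve in \(\square^{n+1}_F\) built from \(p,g_1,\dots,g_n\). Arranging proper intersection with the faces takes some care.
\item Check that \(\phi\) sends tame symbols to face intersections.
\item Induct on \(d\): every other residue field that appears has degree \(<d\) over \(F\).
\end{enumerate}
Incidentally, the induction on transcendence degree announced in your first sentence is never used. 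The induction actually needed is this one, on the degree of the finite extension.
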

\begin{corollary}[(Nesterenko-Suslin-Totaro for motivic spectra) \protect{\cite[Thm. 5.3.2]{Morel_Introduction}}]
    Let \(k\) be a perfect field. There is an equivalence of homotopy sheaves \(\bfK^{M}_{\ast} \to \pi_0(\HZ)_{\ast} \in \Shv_{Nis}(\Smk, \mathrm{Ab})^{\bfZ}\) and so an equivalence of motivic spectra \(\rmH\bfK^{M} \to \pi_0(\HZ) \in \SH(k)\).
\end{corollary}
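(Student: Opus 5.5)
The plan is to get the equivalence from the structure of \(\HZ\) and the homotopy t-structure, using the Nesterenko--Suslin--Totaro Theorem \ref{thm:MilnorLine} as the input on fields. By Remark \ref{rmk:connectiveness-motivic-cohomology}, \(\HZ\) is connective in the homotopy t-structure, so \(\pi_0(\HZ)\simeq \tau_{\leq 0}\HZ\) lies in the heart and is a homotopy module \(\pi_0(\HZ)_\ast\). We need a map \(\bfK^M_\ast\to\pi_0(\HZ)_\ast\) of homotopy modules, and then a check that it is an isomorphism.

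For the map, compose the unit \(\bfS\to\HZ=s^0(\bfS)\) (via \(f^0\bfS=\bfS\), since \(\bfS\) is effective) with truncation to get \(\pi_0\bfS\to\pi_0\HZ\). By Theorem \ref{thm:Motivic_Pi0_SphereSpectrum} and its corollary, \(\pi_0\bfS\simeq \rmH\K^{MW}\), so we get \(\K^{MW}_\ast\to\pi_0(\HZ)_\ast\). Next we show \(\eta\) acts by zero on \(\pi_0(\HZ)_\ast\), so that the map factors through \(\K^M_\ast=\K^{MW}_\ast/(\eta)\). One route is \(\HZ\simeq s^0(\bfS)\): \(\eta\colon \bfG_m\otimes\bfS\to\bfS\) raises the effectivity level, and \(\HZ\otimes\bfG_m\) lives in slice degree \(1\), so \(s^0(\eta)=0\). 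Alternatively, \(\eta\) vanishes in \(\pi_0(\HZ)_{-1}(F)=H^{-1}(F,\bfZ(-1))=0\). Since the quotient is taken in the abelian category \(\bfH\bfI(k)\), we obtain a morphism of homotopy modules \(\bfK^M_\ast\to\pi_0(\HZ)_\ast\).

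To check it is an isomorphism, we use that homotopy modules are unramified sheaves, so a morphism of homotopy modules is an isomorphism as soon as it is one on sections over all finitely generated fields \(F/k\). This is the standard fact that a strictly \(\bfA^1\)-invariant sheaf injects into its generic stalk and is determined by its Gersten/Rost--Schmid complex; if the kernel and cokernel vanish on fields, they vanish. On such \(F\) we have \(\pi_0(\HZ)_n(F)=H^n(F,\bfZ(n))\). This uses the identification of \(\HZ\) with Voevodsky's motivic cohomology, recorded in the remark on the definition of motivic cohomology, together with continuity for essentially smooth field points. The map \(K^M_n(F)\to H^n(F,\bfZ(n))\) is then the Nesterenko--Suslin--Totaro map of Theorem \ref{thm:MilnorLine}, which is an isomorphism. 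For \(n<0\), both sides vanish.

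The main obstacle is the compatibility in this last step: showing that the map built abstractly from the unit \(\bfS\to\HZ\) agrees on fields with the classical symbol map \(\{a_1,\dots,a_n\}\mapsto [a_1]\cup\cdots\cup[a_n]\). Since both are multiplicative ring maps, it suffices to compare them in degree \(1\), on \([a]\in K^{MW}_1(F)\). There \([a]\) comes from the point \(a\colon \mathrm{Spec}\,F\to\bfG_m\), and the comparison reduces to the identification \(H^1(F,\bfZ(1))\cong F^\times\), under which \(a\) corresponds to \(\bfZ(1)\simeq\bfG_m[-1]\). I would take this from \cite[Thm. 5.3.2]{Morel_Introduction} rather than reprove it.
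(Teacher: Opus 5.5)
Your proposal is correct and takes essentially the paper's route: the paper gives no separate argument, treating the statement as a direct consequence of Theorem \ref{thm:MilnorLine} on fields, with the passage from fields to homotopy modules being the unramifiedness principle of Remark \ref{rmk:fields} and the construction and compatibility of the comparison map delegated to \cite[Thm. 5.3.2]{Morel_Introduction}, just as you do. One small precision: your first argument for \(\eta\) acting by zero really rests on the fact that every map from the \(1\)-effective object \(\HZ\otimes\bfG_m\) into the \(0\)-slice \(\HZ\) vanishes, not on \(s^0(\eta)=0\) itself, while your second argument via \(H^{-1}(F,\bfZ(-1))=0\) and multiplicativity is already complete.
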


\subsection{Effective objects and Geisser-Levine Phenomena}

For every connective motivic spectrum \(E \in \SH(k)_{\geq 0}\) there is a map \(E \to \tau_{\leq 0}(E)=\pi_0(E)\).
Even if \(E\) is effective, \(\pi_0(E)\) does not have to be.
For example, if \(E=\bfS\) (the sphere spectrum), then \(\rmH \bfK^{MW} \overset{\simeq}{\to} \pi_0(\bfS)\) is the Milnor-Witt K-theory motivic spectrum (see Theorem \ref{thm:Motivic_Pi0_SphereSpectrum}); and if \(E=\HZ\) is motivic cohomology, then \(\rmH \bfK^{M} \overset{\simeq}{\to} \pi_0(\HZ)\) is Milnor K-theory motivic spectrum (see Theorem \ref{thm:MilnorLine}).
In general, Milnor-Witt and Milnor K-theory are not effective because of the following.

\begin{remark}[(Non effectivity of \(\rmH\K^{M}\) and \(\rmH\K^{MW}\))]\label{rmk:non-effectivity-KMW}
    Let \(k\) be a perfect field of any characteristic, \(\ell\) be a prime invertible in \(k\), and assume \(k\) contains the \(l\)-adic roots of unity.
    By \cite[Lemma 2.4]{Orlov_Vishik_Voevodsky} there is a cofiber sequence in \(\SH(k)\) 
    \begin{equation}\label{eq:coconnective-cover-HZ}
        \HZ/\ell[1] \otimes \; \bfG_m^{\otimes -1} \to \HZ/\ell \to \rmH \K^{M}/\ell. 
    \end{equation}
    If \(\rmH \K^{M}/\ell\) were effective, then the cofiber sequence would force also \(\HZ/\ell[1] \otimes \; \bfG_m^{\otimes -1}\) to be, but it is evidently not due to de-suspension. 
    Therefore \(\rmH\K^{M}\) cannot be effective, and the same conclusion holds for \(\rmH \K^{MW}\), otherwise effectivity would descend to the quotient \(\K^{M}\) as well.\\
    We should interpret this cofiber sequence as saying that to kill the positive part of the t-structure of \(\HZ/\ell\) we need to attach some cells which, however, are not effective, making it impossible for the coconnective cover to be effective. 
\end{remark}

The Geisser-Levine Theorem shows that in the case of a field \(k\) of positive characteristic \(p\), there is no p-adic motivic cohomology outside of Milnor K-theory.
\begin{theorem}[(Geisser-Levine) \protect{\cite[Thm. 8.4]{Geisser_Levine}}]\label{thm:Geisser-Levine}
    Let \(F\) be a field of characteristic \(p \neq 0\) then the top degree morphism \(\bfZ(j)(F) \to H^j(F, \bfZ(j))[-j] \) induces an equivalence after taking quotient modulo \(p^r\)
\begin{equation*}
    \bfZ(j)(F)/p^r \to H^j(F, \bfZ(j))[-j]/p^r.
\end{equation*}
\end{theorem}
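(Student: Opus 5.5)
Since \(F\) is a field, \(\bfZ(j)(F)\) is concentrated in cohomological degrees \(\leq j\), and \(H^j(F,\bfZ(j))\simeq K^M_j(F)\) by Theorem \ref{thm:MilnorLine}. The map \(\bfZ(j)(F)\to H^j(F,\bfZ(j))[-j]\) is the top truncation. So the statement reduces to a vanishing claim: \(H^i(F,\bfZ/p^r(j))=0\) for \(i\neq j\), together with \(H^j(F,\bfZ/p^r(j))\simeq K^M_j(F)/p^r\).

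The identification in degree \(j\) comes for free. Degree \(j\) is the top degree, so the long exact sequence for \(\cdot p^r\) gives \(H^j(F,\bfZ/p^r(j))=H^j(F,\bfZ(j))/p^r\). We will also use that \(K^M_j(F)\) is \(p\)-torsion free (Izhboldin), so no \(\mathrm{Tor}\) term from degree \(j\) leaks into degree \(j-1\). Using the exact triangles \(\bfZ/p(j)\to\bfZ/p^{r}(j)\to\bfZ/p^{r-1}(j)\), a five-lemma induction on \(r\) reduces everything to \(r=1\). It therefore suffices to prove \(H^i(F,\bfZ/p(j))=0\) for \(i<j\).

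For this vanishing I would work with Bloch's cycle complex \(z^j(F,\bullet)/p\), which computes \(H^{2j-n}(F,\bfZ/p(j))\) as its \(n\)-th homology. The target is the logarithmic de Rham--Witt side: the Bloch--Gabber--Kato theorem identifies \(K^M_j(F)/p\simeq\Omega^j_{F,\log}\) via \(\mathrm{dlog}\). The plan is to construct a map of complexes from \(z^j(F,\bullet)/p\) to \(\Omega^j_{\log}\) placed in degree \(0\). On each face-compatible cycle \(Z\subset\Delta^n_F\) one sends \(Z\) to a pushforward of \(\mathrm{dlog}\) forms, and one must check that this kills boundaries and agrees with Totaro's symbol map in the top degree. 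Then we need to show the map is a quasi-isomorphism, that is, that the higher homology of \(z^j(F,\bullet)/p\) vanishes. The idea is to use a Gersten-type resolution in the \(\Delta\)-direction. The key input is that, in characteristic \(p\), the sheaves \(\Omega^j_{\log}\) (equivalently \(\Omega^j\) modulo the Cartier operator \(C-1\)) satisfy \(\bfA^1\)-homotopy invariance and purity. This lets one run a localization/moving argument showing that any cycle class in \(z^j(F,n)/p\) with \(n>0\) is a boundary.

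The main obstacle is this last step: proving acyclicity of the mod \(p\) cycle complex in positive simplicial degrees. Here the usual \(\ell\neq p\) tools (rigidity, the Beilinson--Lichtenbaum comparison with étale cohomology of \(\mu_\ell^{\otimes j}\)) are unavailable. What I would try first is to compare \(\bfZ/p(j)\) with \(\nu^j_1[-j]=\Omega^j_{\log}[-j]\) on the small Zariski site of smooth \(F\)-schemes via the Gersten complexes on both sides. Both sides satisfy Gersten resolutions (by Bloch's localization for cycles and by Gros--Suwa for \(\Omega^j_{\log}\)), so it suffices to check agreement at fields in all degrees. One then argues by induction on \(j\) together with a norm/transfer argument. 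Concretely, a cycle class is split via transfers from finite extensions where it becomes a symbol, and the Bloch--Kato--Gabber injectivity of \(\mathrm{dlog}\) is used to conclude.
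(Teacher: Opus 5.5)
The paper gives no proof of this statement. It quotes it from \cite{Geisser_Levine} and uses it as a black box: via Corollary~\ref{cor:Geisser-Levine-motivic} and the resulting effectivity of $\mathrm{H}\mathbf{K}^M/2$, it feeds Lemmas~\ref{lemma:Morel-str-conj-modtwo} and~\ref{lemma:s_0-KW}. So your sketch has to stand on its own, and it has a genuine gap.

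Your reductions are correct. $\mathbb{Z}(j)(F)$ lives in cohomological degrees $\le j$, and the top group is $K^M_j(F)$ by Theorem~\ref{thm:MilnorLine}. Izhboldin removes the $\mathrm{Tor}$ term, and d\'evissage along $\mathbb{Z}/p\to\mathbb{Z}/p^r\to\mathbb{Z}/p^{r-1}$ reduces to $r=1$. Everything therefore comes down to $H^i(F,\mathbb{Z}/p(j))=0$ for $i<j$. But that vanishing \emph{is} the Geisser--Levine theorem. You flag it yourself as the main obstacle, and none of the routes you propose around it works.

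(i) The Gersten comparison is circular. Gersten resolutions for $\mathbb{Z}/p(j)$ and for $\nu^j$ reduce a sheaf-level quasi-isomorphism $\mathbb{Z}/p(j)\simeq\nu^j[-j]$ to the same comparison at generic points, i.e.\ at finitely generated extensions of $F$, including $F$ itself. That is how one passes \emph{from} the field statement \emph{to} the sheaf statement, not conversely. Induction on $j$ does not break the circle. The codimension-$a$ terms have weight $j-a$, but the generic term is again weight $j$ on a field, which is exactly the unknown.

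(ii) The transfer/symbol step and Bloch--Gabber--Kato injectivity only see the top degree. Writing a closed point of $\Delta^j_F$ as the norm of a symbol is the Nesterenko--Suslin--Totaro argument for $H^j$, which you already have. A class in $H^i(F,\mathbb{Z}/p(j))$ with $i<j$ is represented by a cycle of dimension $j-i>0$ on $\Delta^{2j-i}_F$, and it does not become a symbol after a finite extension. Since $\nu^j(F)$ sits in a single degree, a chain map from $z^j(F,\bullet)/p$ to it says nothing about lower homology, whether or not it is injective on top homology. Norm arguments only descend vanishing along prime-to-$p$ extensions from a field where it is already known, and there is no such field here.

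(iii) $\mathbb{A}^1$-invariance and purity are properties of $\nu^j$. Using them to move cycles in $z^j(-,\bullet)/p$ presupposes that the mod-$p$ cycle complex already agrees with $\nu^j[-j]$ on the schemes you move on (affine spaces, or the semi-local simplices $\hat{\Delta}^m_F$). That is again the theorem.

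What is missing is an independent, characteristic-$p$-specific argument that controls the lower cohomology of $\mathbb{Z}/p(j)$ at fields. That is the substance of \cite{Geisser_Levine}, and no step of your sketch plays that role.

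There is also an indexing slip. $\Omega^j_{F,\log}$ must sit in homological degree $j$ of $z^j(F,\bullet)/p$, not degree $0$. The claim that every cycle in $z^j(F,n)/p$ with $n>0$ is a boundary is false for $n=j$, where the homology is $K^M_j(F)/p$, which is nonzero in general. You mean $n>j$; note that $z^j(F,n)=0$ for $n<j$.
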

At the level of motivic spectra we can express the result of the Geisser-Levine Theorem as in the following.
\begin{corollary}\label{cor:Geisser-Levine-motivic}
    If \(k\) is a perfect field of characteristic \(p \neq 0\), then the morphism \(\HZ/p^r \to \pi_0(\HZ/p^r)\) is an equivalence for every \(r \geq 1\). 
\end{corollary}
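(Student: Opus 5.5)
To prove Corollary \ref{cor:Geisser-Levine-motivic}, we would first reduce the statement to a comparison of homotopy sheaves. Since \(\HZ/p^r\) is effective, it suffices, by the second part of the conservativity corollary, to show that \(\pi_i(\HZ/p^r)_0 \to \pi_i(\pi_0(\HZ/p^r))_0\) is an isomorphism for every \(i\). There is, however, a subtlety: \(\pi_0(\HZ/p^r)\) is not a priori effective. It is therefore safer to use the first, graded version of that corollary: we check that \(\pi_i(\HZ/p^r)_j = 0\) for all \(i \neq 0\) and all \(j\in\bfZ\). Once this is established, \(\HZ/p^r\) lies in the heart. The truncation map \(\HZ/p^r \to \tau_{\leq 0}\HZ/p^r = \pi_0(\HZ/p^r)\) is then an equivalence, because the t-structure is nondegenerate (left and right complete, as recalled above). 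Note that \(\HZ/p^r\) is connective by Remark \ref{rmk:connectiveness-motivic-cohomology}, so we only need vanishing for \(i>0\). Concretely, we need \(\pi_i(\HZ/p^r)_j=0\) for \(i>0\), and \(i<0\) is automatic.

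Next, we compute these sheaves on stalks. Homotopy sheaves of a motivic spectrum are strictly \(\bfA^1\)-invariant, hence unramified, so they inject into their values on generic points. It therefore suffices to evaluate them on finitely generated fields \(F/k\), which have characteristic \(p\). We identify \(\pi_i(\HZ/p^r)_j(F)\) with a motivic cohomology group of \(F\). Since \(\omega^\infty(\HZ\otimes \bfG_m^{\otimes j})\) is the complex \(\bfZ(j)[j]\) (Voevodsky's identification, compatible with our definition of \(\HZ\) via the remark on its definition), we get
\[\pi_i(\HZ/p^r)_j(F)\cong H^{j-i}(F,\bfZ/p^r(j)).\]
For \(j<0\) the complex \(\bfZ(j)\) vanishes, so these groups are zero. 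For \(j\ge 0\), Theorem \ref{thm:Geisser-Levine} says that \(\bfZ(j)(F)/p^r\) is concentrated in cohomological degree \(j\), with value \(H^j(F,\bfZ(j))/p^r\). Hence \(H^{j-i}(F,\bfZ/p^r(j))=0\) for \(i\neq 0\), which gives the required vanishing.

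Finally, we identify the surviving piece. For \(i=0\), Theorem \ref{thm:MilnorLine} identifies the group with \(K^M_j(F)/p^r\). This matches the remark that \(\rmH\bfK^M/p\) is both the cokernel and the cofiber, by Izhboldin. This identification is not needed for the equivalence itself, but it recovers the zig-zag with \(\rmH\bfK^M/p^r\) described in the introduction.

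The main obstacle is the bookkeeping that identifies \(\pi_i(\HZ)_j\) on fields with \(H^{j-i}(F,\bfZ(j))\) in characteristic \(p\). Here \(k\) is perfect, and one needs the comparison with Bloch's cycle complexes cited in the remark on the definition (Levine), since Geisser–Levine is phrased for those complexes. A second point is justifying that vanishing at all function fields implies vanishing of the sheaves. For this we would use the unramifiedness of homotopy sheaves (Morel), noting that the sections of an unramified sheaf over \(X\) inject into its sections over the function field.
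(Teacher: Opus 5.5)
Your proposal is correct and is essentially the paper's argument. The paper applies \(\omega^\infty(-\otimes\bfG_m^{\otimes j})\) to the truncation map, reduces to finitely generated fields via unramifiedness (Remark \ref{rmk:fields}) and conservativity of \(\omega^{\infty,gr}\), and invokes Geisser--Levine; this is the same check as your vanishing of \(\pi_i(\HZ/p^r)_j\) on fields for \(i\neq 0\). Your version is slightly cleaner in that it treats \(j<0\) explicitly and never has to identify \(\omega^\infty\) of the target \(\pi_0(\HZ/p^r)\otimes\bfG_m^{\otimes j}\).
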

\begin{proof}
    Assume Theorem \ref{thm:Geisser-Levine}. 
    By the homotopy t-structure we have a natural transformation of functors \(\SH(k) \to \SH(k)\) given by \(id \to \tau_{\leq 0}\), the coconnective cover.
    Since \(\HZ\) is connective, also \(\HZ/p^r \coloneq \mathrm{cof}(\HZ \overset{p^r}{\to} \HZ)\) is connective.
    It follows that \(\tau_{\leq 0} \HZ/p^r \overset{\simeq}{\to} \pi_0 \HZ/p^r\).
    So we can consider the morphism in \(\SH(k)\)
    \begin{equation*}
        \HZ/p^r \to \pi_0(\HZ/p^r).
    \end{equation*}
    Applying \(\omega^\infty( - \otimes \bfG_m^{\otimes j})\), we obtain morphisms from
    \begin{equation*}
        \omega^\infty {(\HZ/p^r \otimes \bfG_m^{\otimes j})} 
        \overset{\simeq}{\longrightarrow}
        \omega^\infty {((\HZ \otimes \bfG_m^{\otimes j})/p^r)}  
        \overset{\simeq}{\longrightarrow}
        \omega^\infty {(\HZ \otimes \bfG_m^{\otimes j})}/p^r
        \overset{\simeq}{\longrightarrow}
        \bfZ(j)/p^r
    \end{equation*}
    to 
    \begin{equation*}
        \omega^\infty {(\pi_0(\HZ/p^r) \otimes \bfG_m^{\otimes j})}
        \overset{\simeq}{\longrightarrow}
        \omega^\infty {(\pi_0(\HZ)/p^r \otimes \bfG_m^{\otimes j})}
        \overset{\simeq}{\longrightarrow}
        \omega^\infty {(\pi_0(\HZ) \otimes \bfG_m^{\otimes j})}/p^r.
    \end{equation*}
    Thus by Theorem \ref{thm:Geisser-Levine}, Remark \ref{rmk:fields} by the conservativity of \(\omega^{\infty,gr}\) (Lemma \ref{lemma:jointly-conservative}), the equivalence follows.
\end{proof}

\begin{corollary}
   Let \(k\) be a perfect field of characteristic \(p \neq 0\).
   There are equivalences
    \begin{equation*}
        \HZ/p^r \overset{\simeq}{\longrightarrow} \pi_0(\HZ/p^r) \overset{\simeq}{\longleftarrow} \rmH \K^{M}/p^r.
    \end{equation*}
    In particular, \(\rmH \K^{M}/p^r\) is effective for every \(r \geq 1\). 
\end{corollary}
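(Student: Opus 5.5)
The first map \(\HZ/p^r \to \pi_0(\HZ/p^r)\) is exactly Corollary \ref{cor:Geisser-Levine-motivic}, so the work is in producing the second map and checking it is an equivalence. I would start from the cofiber sequence \(\HZ \overset{p^r}{\to} \HZ \to \HZ/p^r\). Since \(\HZ\) is connective (Remark \ref{rmk:connectiveness-motivic-cohomology}) and \(\pi_0\) of a connective object is computed by the truncation \(\tau_{\leq 0}\), the long exact sequence of homotopy objects in the heart ends with
\[
\pi_0(\HZ) \overset{p^r}{\longrightarrow} \pi_0(\HZ) \longrightarrow \pi_0(\HZ/p^r) \longrightarrow 0 .
\]
Hence \(\pi_0(\HZ/p^r)\) is the cokernel, taken in \(\SH(k)^\heartsuit\), of multiplication by \(p^r\) on \(\pi_0(\HZ)\). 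By the Nesterenko--Suslin--Totaro corollary we have \(\pi_0(\HZ)\simeq \rmH\K^M\), so \(\pi_0(\HZ/p^r)\) is the heart-cokernel of \(p^r\) on \(\rmH\K^M\).

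Next I compare this heart-cokernel with the cofiber \(\rmH\K^M/p^r\). The natural map \(\rmH\K^M \to \pi_0(\HZ)\) induces a map of cofibers \(\rmH\K^M/p^r \to \pi_0(\HZ)/p^r\). Composing with \(\tau_{\leq 0}\) gives \(\rmH\K^M/p^r \to \pi_0(\HZ/p^r)\), and this is the second arrow.

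The cofiber \(\rmH\K^M/p^r\) has \(\pi_1\) equal to the kernel of \(p^r\) on \(\K^M_\ast\) and \(\pi_0\) equal to the cokernel. To see that the kernel vanishes, I would use Izhboldin's theorem: \(K^M_\ast(F)\) is \(p\)-torsion free for fields \(F\) of characteristic \(p\), hence also \(p^r\)-torsion free. The kernel of \(p^r\) is a homotopy module, so it is unramified and therefore determined by its sections on finitely generated fields (as in the preceding remark). Its values on fields are zero, so it vanishes. Thus \(\rmH\K^M/p^r\) lies in the heart and coincides with the heart-cokernel, and the second arrow is an equivalence.

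\textbf{Main obstacle.} The delicate step is making this vanishing argument rigorous: that a homotopy module with zero sections on all finitely generated field extensions is zero. This should follow from unramifiedness together with the Gersten-type injectivity \(M(X)\hookrightarrow M(k(X))\), applied to the stalks. Once it is in place, both arrows are equivalences.

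The effectivity statement is then formal. \(\HZ\) is effective, being a slice of \(\bfS\), and \(\SHeff(k)\) is stable, so \(\HZ/p^r\) is effective. Since \(\rmH\K^M/p^r \simeq \HZ/p^r\), it lies in \(\SHeff(k)\). Equivalently, the counit \(f^0(\rmH\K^M/p^r)\to \rmH\K^M/p^r\) is an equivalence.
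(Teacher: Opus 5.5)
Your proposal is correct and matches the argument the paper assembles without writing it out. The first arrow is Corollary \ref{cor:Geisser-Levine-motivic}; the second combines the Nesterenko--Suslin--Totaro identification $\pi_0(\HZ)\simeq\rmH\K^M$ with Izhboldin's $p$-torsion-freeness, which the paper uses in its remark on mod $n$ Milnor K-theory exactly as you do, to see that the cofiber $\rmH\K^M/p^r$ lies in the heart and agrees with the heart-cokernel; effectivity is then inherited from $\HZ/p^r$.

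The step you flag as the main obstacle is handled in the paper by unramifiedness (Remark \ref{rmk:fields}). Here it is even more direct: $\K^M_\ast(X)\subset K^M_\ast(k(X))$, so $p^r$-torsion sections over $X$ inject into the $p^r$-torsion of the function field, which vanishes.
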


Using this, we immediately see that, if \(k\) is a perfect field of characteristic \(2\) the lower arrow in the square \eqref{eq:str-conj-2} is an equivalence, and so must be the upper one. We can interpret this as the fact that in characteristic \(2\) there is no Witt motivic cohomology outside the Witt K-theory, so that \(\rmH \K^{W}\) is effective. This result should be interpreted as a quadratically enriched version of Geisser-Levine.

\begin{remark}[(Non effectivity of \(\rmH\K^{W}\) in characteristic \(\neq 2\))]\label{rmk:non-effectivity-KW}
    If \(k\) is a perfect field of characteristic \(\neq 2\), then \(\rmH\K^{W}\) cannot be effective.
    Indeed, assuming Theorem \ref{thm:Morel-str-conj-2}, if \(\rmH\K^{W}\) were effective, then the upper horizontal arrow would have been an equivalence.
    Since the square \eqref{eq:str-conj-2} is cartesian and \(\SH(k)\) is stable, the lower horizontal map \(\HZ/2 \to \rmH\bfK^{M}_2\) would be an equivalence; however, in remark \ref{rmk:non-effectivity-KMW}, we have shown that \(\rmH\K^{M}/2\) is not effective, so this it cannot be equivalent to \(\HZ/2\). 
    By reducing the square modulo a prime \(\ell \neq 2\) or by tensoring with \(\bfQ\) we obtain cartesian squares 
    \begin{equation*}
        \begin{tikzcd}[cramped,sep=scriptsize]
        	{\HWZ / \ell} && {\rmH\K^{W}/\ell} && {\HWZ \otimes \; \bfQ } && {\rmH\K^{W}\otimes \; \bfQ } \\
        	&&& {\text{and}} \\
        	{(\HZ/2)/ \ell \simeq 0} && {(\rmH\K^M/2)/ \ell \simeq 0} && {(\HZ/2)\otimes \; \bfQ \simeq 0} && {(\rmH\K^M/2) \otimes \; \bfQ \simeq 0}
        	\arrow[from=1-1, to=1-3]
        	\arrow[from=1-1, to=3-1]
        	\arrow[from=1-3, to=3-3]
        	\arrow[from=1-5, to=1-7]
        	\arrow[from=1-5, to=3-5]
        	\arrow[from=1-7, to=3-7]
        	\arrow[from=3-1, to=3-3]
        	\arrow[from=3-5, to=3-7]
        \end{tikzcd}
    \end{equation*}
    which guarantees \(\rmH\K^{W}/\ell\) and \(\rmH\K^{W} \otimes \; \bfQ\) are effective.
    Therefore the failure of effectivity is concentrated in the mod \(2\) case\footnote{See remark \ref{rmk:smaller-assumptions} for an explanation of the meaning of this.}.
\end{remark}

To prove Theorem \ref{thm:Morel-str-conj} and Theorem \ref{thm:Morel-str-conj-2} in characteristic \(2\) we will need to prove that \(\rmH \K^{W}\) is effective in characteristic \(2\).
This will be verified by means of the following effectivity criterion.

\begin{theorem}[(Bachmann-Fasel effectivity Criterion)\protect{\cite[Ch.7, Thm. 4.4]{Bachmann_MWMotives}}]\label{thm:Bachmann-Fasel-effectivity}
     Let \(k\) be a perfect field and \(E \in \SH(k)\).
     Then \(E\) is effective if and only if for all \(n \geq 1\) and all finitely generated fields \(F/k\), it holds 
     \begin{equation*}
         \left\lvert \omega^\infty(E)(n)( \hat{\Delta}^\bullet_F ) \right\rvert \simeq \ast.
     \end{equation*}
\end{theorem}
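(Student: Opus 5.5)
We plan to reduce effectivity to a vanishing statement about the cofiber of the effective cover, and then to compute that vanishing with Levine's homotopy coniveau tower, evaluated on fields. We read \(\omega^\infty(E)(n)\) as the \(\bfS^1\)-spectrum \(\omega^\infty(E\otimes\bfG_m^{\otimes -n})\), and \(\hat\Delta^\bullet_F\) as the cosimplicial semilocal scheme of the vertices of \(\Delta^\bullet_F\).

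\emph{Step 1 (reduction to negative twists).} Let \(C\coloneq\mathrm{cof}(f^0E\to E)\). Then \(E\) is effective iff \(C\simeq 0\). By adjunction, \(\omega^\infty(C\otimes\bfG_m^{\otimes j})=0\) for \(j\ge 0\). Indeed, \(\Sigma^\infty_+X\otimes\bfG_m^{\otimes j}\) is a retract of \(\Sigma^\infty_+(X\times\bfG_m^{\times j})\), which is effective, and \(C\) is right orthogonal to \(\SHeff(k)\). By Lemma \ref{lemma:jointly-conservative} and the conservativity of the homotopy sheaves \(\pi_i(-)_j\), we have \(C\simeq 0\) iff \(\omega^\infty(C\otimes\bfG_m^{\otimes -n})=0\) for all \(n\ge 1\). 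These objects are \(\bfA^1\)-invariant Nisnevich sheaves of spectra. Their homotopy sheaves are strictly \(\bfA^1\)-invariant and hence unramified, so by Gersten injectivity it suffices to test them on finitely generated fields \(F/k\). Thus \(E\) is effective iff
\[
\omega^\infty(f^0E\otimes\bfG_m^{\otimes -n})(F)\longrightarrow\omega^\infty(E\otimes\bfG_m^{\otimes -n})(F)
\]
is an equivalence for all \(n\ge 1\) and all \(F\).

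\emph{Step 2 (identify the source via Levine).} Since \(f^0E\otimes\bfG_m^{\otimes-n}\simeq f^{n}(E\otimes\bfG_m^{\otimes -n})[n]\) up to the standard \(\bfT\) versus \(\bfG_m\) shift, the source is \(\omega^\infty f^n\) of a spectrum \(E'\). Levine's homotopy coniveau theorem, in the form of Bachmann--Fasel for perfect \(k\), computes \(\omega^\infty f^n(E')(X)\) as the realization \(|\omega^\infty(E')^{(n)}(X\times\Delta^\bullet)|\) of sections with supports in codimension \(\ge n\), in good position with respect to faces. Over the semilocal \(\hat\Delta^\bullet_F\), we will use the moving lemma to replace \(\Delta^\bullet_F\) by \(\hat\Delta^\bullet_F\). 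We then compare the localization sequence "supports \(\ge n\)" \(\to\) "all" \(\to\) "restriction to the complement of codim \(\ge n\)" levelwise. The realization of the total term is \(\omega^\infty(E')(F)\) by \(\bfA^1\)-invariance.

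\emph{Step 3 (conclude).} The cofiber of the comparison map is then the realization of the restriction term. Because the cosimplicial scheme is semilocal and we are in positive codimension, this term is identified with \(|\omega^\infty(E)(n)(\hat\Delta^\bullet_F)|\). So the map of Step 1 is an equivalence iff this realization is contractible, which is the criterion. Both directions come out of the same cofiber sequence.

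The main obstacle is Step 2. It requires the precise form of Levine's coniveau description of \(f^n\) (the moving lemma over a perfect, possibly finite, field), and the passage from \(\Delta^\bullet_F\) to \(\hat\Delta^\bullet_F\) together with the identification of the complementary term. If the finite-field case of the moving lemma causes trouble, we would first prove the criterion after a pro-\(\ell\) extension of \(F\) and descend with a standard transfer argument.
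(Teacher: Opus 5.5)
There is a genuine gap, and it starts with how you read the statement. In the criterion, \(\omega^\infty(E)(n)\) means \(\omega^\infty(E\otimes\bfG_m^{\otimes n})\), a \emph{positive} twist. This is how the paper applies it: to \(\rmH\bfK^W\otimes\bfG_m^{\otimes d}\) with \(d\geq 1\).

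With your reading the statement is false. By Theorem \ref{thm:h-coniveau} and \(s^0(E\otimes\bfG_m^{\otimes -n})\simeq s^n(E)\otimes\bfG_m^{\otimes -n}\), your condition only tests the positive slices of \(E\). So \(E=\HZ\otimes\bfT^{\otimes -1}\) passes it without being effective, since all its slices other than \(s^{-1}\) vanish. Conversely, \(E=\HZ\otimes\bfT\) is effective but fails it at \(n=1\), where one gets \(\omega^\infty(\HZ)(F)[1]\neq 0\).

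The sign slip is already in Step 1. Since \(\omega^\infty(C\otimes\bfG_m^{\otimes j})(X)\simeq\Map(\Sigma^\infty_+X\otimes\bfG_m^{\otimes -j},C)\), orthogonality of \(C\) to \(\SHeff(k)\) kills it for \(j\leq 0\), not for \(j\geq 0\). The twists that carry information are \(\bfG_m^{\otimes n}\) with \(n\geq 1\). The slip recurs in Step 2. Since \(f^0(E)\otimes\bfT^{\otimes m}\simeq f^m(E\otimes\bfT^{\otimes m})\), a negative twist yields \(f^{-n}\), and Levine's description by supports of codimension \(\geq n\) does not apply there.

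Even with the sign corrected, Step 3 fails for \(n\geq 2\). Put \(E'=E\otimes\bfG_m^{\otimes n}\), so that \(C\otimes\bfG_m^{\otimes n}\simeq\mathrm{cof}(f^nE'\to E')\). Removing supports of codimension \(\geq n\) then computes \(\omega^\infty(f^0E'/f^nE')(F)\). This is an iterated extension of the \(\omega^\infty(s^iE')(F)\) for \(0\leq i<n\). The semilocalization \(\hat{\Delta}^\bullet_F\) instead discards every good-position support of codimension \(\geq 1\), and it sees only \(s^0E'\) (Theorem \ref{thm:h-coniveau}). Semilocality and positive codimension do not identify these two objects.

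The missing idea is to work one slice at a time.
\begin{itemize}
\item The slice filtration is exhaustive, \(E\simeq\colim_n f^{-n}E\), because \(\SH(k)\) is generated by the objects \(\Sigma^\infty_+X\otimes\bfT^{\otimes -m}\).
\item Hence \(E\) is effective if and only if \(s^{-n}E\simeq 0\) for all \(n\geq 1\). Equivalently, \(s^0(E\otimes\bfG_m^{\otimes n})\simeq s^{-n}(E)\otimes\bfG_m^{\otimes n}\) is zero for all \(n\geq1\).
\item This object is effective. Your field-detection argument from Step 1 (homotopy sheaves are unramified, and \(\omega^\infty\) is conservative on effective objects) therefore shows its vanishing is tested by \(\omega^\infty(-)(F)\) on finitely generated \(F\).
\item Theorem \ref{thm:h-coniveau} converts this into exactly the stated condition.
\end{itemize}
This is the structure of the cited Bachmann--Fasel argument. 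The moving-lemma and semilocalization input is entirely packaged in Theorem \ref{thm:h-coniveau} (Lemma 4.3 of the cited chapter), and the criterion itself is then formal.
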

\noindent
We need to explain some of the objects used in this theorem.
Let \(F\) be a field; \(\hat{\Delta}^n_F\) denotes the semi-localization
\footnote{Semi-localization for schemes is the analog of localization but for a finite collection of points instead of a single one.
This procedure does not behave well for all schemes, but it does for affine finite schemes, i.e., a scheme for which every finite subset is contained in an affine open subset; for example, any quasi-projective scheme over an affine scheme is affine finite \cite[Prop. 3.3.36]{Liu_Algebraic_Geomtry_Arithmetic_Curves}. 
A scheme is semi-local if it is the localization of a smooth integral scheme at finitely many points.} 
of the algebraic \(n\)-simplex
\begin{equation*}
     \Delta^n_F \coloneq \Spec\left(F[x_0,\dots,x_n]/\left(\sum_{i=0}^n x_i=1\right)\right)
\end{equation*}
at the n+1 vertices \(\{v_0, \dots v_n\}\) of \(\Delta^n_F\), i.e., 
\begin{equation*}
    \hat{\Delta}^n_F= \lim_{\{v_0, \dots, v_n\} \subset U \subset \Delta^n_F} U,
\end{equation*}
where the limit runs over the open subsets of \(\Delta^n_F\) containing all the vertices. 

In Theorem \ref{thm:Bachmann-Fasel-effectivity} we are interested in computing the value of \(\omega^\infty(E)(n)\) on the semi-local scheme \(\hat{\Delta}^\bullet_F\).
However, \(\hat{\Delta}^\bullet_F\) does not have to be a smooth \(k\)-scheme. 
We are interested in extending the definition of \(\omega^\infty(E)(n)\) from the category of smooth \(k\)-schemes \(\Smk\) to a larger category of schemes \(\essSmk\) called essentially smooth \(k\)-schemes.
An essentially smooth \(k\)-scheme is a scheme \(X\) that is the inverse limit \(\varprojlim X_{\alpha}\) of smooth \(k\)-schemes \(X_\alpha\) along  \'etale affine morphisms \(X_{\alpha} \to X_{\beta}\).
\footnote{Not all essentially smooth schemes are noetherian.}
For example, if \(F\) is a finitely generated field extension of a perfect field \(k\), then \(\Spec(F)\) is essentially \(k\)-smooth \cite[Lemma A.2]{Hoyois_FromAGCtoMC}. 
If \(X\) is a smooth \(k\)-scheme and \(x\) is a point in \(X\), then the local scheme \(X_x \coloneq \Spec(\calO_{X,x})\), its henselization, and its strict henselization are essentially \(k\)-smooth.\\
We can extend \(F: \Smk^{op} \to \mathrm{Sp}\) from smooth \(k\)-schemes to a functor \(\tilde{F}\) of essentially-smooth \(k\)-schemes by left Kan extension. 
This notion is well defined by \cite[Proposition 8.14.2]{EGA4}.
So we can extend
\(\omega^\infty(E) \in \Shv_{Nis, \bfA^1}(\Smk, \mathrm{Sp})\) to
\begin{equation*}
    \omega^\infty(E) \in \Shv_{Nis, \bfA^1}(E\Smk, \mathrm{Sp})
\end{equation*}
and write \(\omega^\infty(E)(\hat{\Delta}^n_F)\) for
\begin{equation*}
    \colim_{\{v_0, \dots, v_n\} \subset U \subset \Delta^n_F} \omega^\infty(E)(U).
\end{equation*}

We will also need to rewrite the condition of Theorem \ref{thm:Bachmann-Fasel-effectivity} in terms of \(0\)-slices, which we can do thanks to the following.
\begin{theorem}[(Homotopy Coniveau Tower) \protect{\cite{Levine_Coniveau}, \cite[Ch.7, Lemma 4.3]{Bachmann_MWMotives}}]\label{thm:h-coniveau}
    Let \(k\) be a perfect field, \(E \in \SH(k)\), and \(F\) a finitely generated extension of \(k\).
    Then \(\omega^{\infty}(s^0 E)(F) \to \left\lvert \omega^{\infty}(E)( \hat{\Delta}^\bullet_F ) \right\rvert\) is an equivalence.
\end{theorem}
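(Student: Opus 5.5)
The plan is to reduce to Levine's homotopy coniveau tower and then compare Levine's ``good position'' supports with the semi-localization at the vertices.

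\emph{Reduction to effective \(E\).} Since every \(\Sigma^\infty_+ X\) with \(X \in \Smk\) is effective, the counit \(f^0 E \to E\) induces an equivalence \(\omega^\infty(f^0E) \simeq \omega^\infty(E)\) on \(\Smk\). This equivalence persists on essentially smooth schemes, because the extension there is by filtered colimits. Moreover \(s^0 E \simeq s^0 f^0 E\). So we may replace \(E\) by \(f^0E\) and assume \(E \in \SHeff(k)\). The map in the statement is then induced by \(E \to s^0 E\) together with the augmentation \(\omega^\infty(E)(F) \to |\omega^\infty(E)(\hat\Delta^\bullet_F)|\); we will check compatibility with these maps at each step.

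\emph{Levine's model for the slice tower.} For \(X\) smooth, Levine defines \(E^{(q)}(X) = |E^{(q)}(X \times \Delta^\bullet)|\). Here \(E^{(q)}(X\times\Delta^n)\) is the colimit of \(E\)-cohomology with supports in closed subsets \(W \subset X\times\Delta^n\) of codimension \(\geq q\) that meet every face \(X \times \Delta^m\) properly. The main input we take from \cite{Levine_Coniveau} (via the moving lemmas there, for \(k\) perfect) is that \(E^{(q)}\) models \(\omega^\infty(f^q E)\). Since \(E\) is \(\bfA^1\)-invariant, \(E^{(0)}(X) \simeq E(X)\). Taking cofibers and using the localization sequence \(E_W(\Delta^n) \to E(\Delta^n) \to E(\Delta^n \setminus W)\), we get
\begin{equation*}
\omega^\infty(s^0E)(F) \simeq \Bigl|\colim_{W} \omega^\infty(E)(\Delta^\bullet_F \setminus W)\Bigr|,
\end{equation*}
where \(W\) runs over codimension \(\geq 1\) closed subsets in good position with respect to all faces. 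We first do this for smooth \(X\), then pass to \(\Spec F\) by taking the colimit over smooth models of \(F\); both sides commute with this filtered colimit.

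\emph{Comparison with the semi-local simplex.} By definition, \(\omega^\infty(E)(\hat\Delta^n_F)\) is the colimit of \(\omega^\infty(E)(\Delta^n_F\setminus W)\) over all closed \(W\) avoiding the vertices. The vertices are the \(0\)-dimensional faces, so every \(W\) in good position avoids them, and we obtain a natural map of simplicial spectra from the good-position colimit to \(\omega^\infty(E)(\hat\Delta^\bullet_F)\). The remaining task is to show that this map becomes an equivalence after geometric realization. This is the step I expect to be the main obstacle: a \(W\) avoiding the vertices need not meet higher-dimensional faces properly.

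To handle this, I would filter both sides by the dimension of the faces on which proper intersection is imposed. I would then use a moving lemma over the field \(F\), in the style of Levine's semi-local moving lemma and \cite[Ch.7, Lemma 4.3]{Bachmann_MWMotives}: via affine-linear ``translations toward the vertices'' on \(\Delta^n_F\), one constructs simplicial homotopies pushing any vertex-avoiding \(W\) into good position on the faces. The expected consequence is that each successive inclusion is a levelwise-colimit equivalence after realization. With this in place, the two realizations agree, and the map \(\omega^\infty(s^0E)(F) \to |\omega^\infty(E)(\hat\Delta^\bullet_F)|\) is an equivalence.
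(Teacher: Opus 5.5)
The reduction to effective \(E\) and the use of Levine's homotopy coniveau tower are the right setup. They are also what the cited sources do; the paper itself gives no proof and only cites Levine and Bachmann--Fasel. The gap is your last step. As written it is not a proof: the filtration by face dimension and the ``translations toward the vertices'' are never turned into an argument. Moreover, appealing there to \cite[Ch.7, Lemma 4.3]{Bachmann_MWMotives} is circular, since that lemma is the statement you are proving. The moving lemmas, and Levine's comparison of the \(\bfP^1\)-slice tower with the \(S^1\)-slice tower, are needed only inside your second step, in the identification of \(E^{(q)}\) with \(\omega^\infty f^q E\).

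The missing idea is that the obstacle you describe does not exist. In Levine's definition, the condition on a face is \(\operatorname{codim}_{\Delta^m_F}(W\cap\Delta^m_F)\geq q\). For \(q=1\) over a field, each face \(\Delta^m_F\) is irreducible, so the condition says exactly that \(W\) does not contain that face. Every face contains a vertex. Hence a closed \(W\) missing the vertices contains no face and is automatically in good position. Conversely, as you observed yourself, good position forces \(W\) to miss the vertices.

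So the two index posets coincide, and their complements are precisely the open neighbourhoods of the vertices. This gives, levelwise and on the nose, \(\colim_W \omega^\infty(E)(\Delta^n_F\setminus W)=\omega^\infty(E)(\hat\Delta^n_F)\). The identification is compatible with faces and degeneracies, since these send vertices to vertices. No filtration and no simplicial homotopy are required.

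Your claim that a vertex-avoiding \(W\) ``need not meet higher-dimensional faces properly'' is only a concern for \(q\geq 2\), and that never enters the computation of \(s^0\). If you replace the moving-lemma sketch with this observation, your argument reduces the theorem to Levine's identification of the coniveau tower with the slice tower.
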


\subsection{Grothendieck--Witt Theories}
Later, we will apply the Bachmann-Fasel effectivity criterion \ref{thm:Bachmann-Fasel-effectivity} to prove  \(\rmH \K^{W}\) is effective.
To do this, we need a more explicit description of \(\rmH \K^{W}\).

\begin{definition}
    The \emph{Grothendieck--Witt ring} of \(F\) is the commutative ring \(GW(F)\) defined as the Gro\-thendieck group of the commutative monoid of isometry classes of nondegenerate symmetric bilinear forms\footnote{If the characteristic of \(F\) is \(2\), there is also a different definition using quadratic form.} over \(F\) with orthogonal sum as addition, and tensor product of forms as multiplication.  
\end{definition}
For \(u \in F^\times\), we denote by \(\langle u \rangle\) the inner product space of dimension one (hence automatically symmetric) such that, if \(e\) is a basis for this space, then \(\langle e, e \rangle =u\). 
A specific example of a non-degenerate symmetric bilinear form which is very important for us is the \emph{hyperbolic form}.
It is defined as the pair \((F^2, h)\), where \(h\) is given by
\begin{equation*}
    h : (F \oplus F) \otimes (F \oplus F) \longrightarrow F,
    \qquad
    h( x \oplus y, x' \oplus y') = xy' + x'y,
\end{equation*}
or equivalently \(h \coloneq  1+ \langle -1 \rangle\).\\
The Grothendieck-Witt ring also has a symbolic description, illustrated for example in \cite[Thm 2.8]{carlier2023milnorwittktheorywittktheory}.
\begin{definition}
    The \emph{Witt ring} of \(F\) is defined as \(W(F)=GW(F)/\langle h\rangle\), where \(\langle h\rangle\) represents the ideal of \(GW(F)\) generated by \(h\) \footnote{Thanks to the symbolic description of \(GW(F)\), the subgroup generated by \(h\) is already an ideal.}.
\end{definition}
\noindent This means that two forms represent the same class in \(W(F)\) if their difference is hyperbolic.

The Grothendieck--Witt ring \(GW(F)\) admits a (surjective) \emph{rank morphism} \(\mathrm{rk}: GW(F) \to \bfZ\), and the Witt ring \(W(F)\) admits a corresponding (surjective) \emph{rank mod \(2\)} morphism \(\mathrm{rk}: W(F) \to \bfZ/2\).
The Witt ring has maximal ideal \(I(F) \coloneq  \ker\big(\mathrm{rk} : W(F) \to \bfZ/2\big)\) called the \emph{fundamental ideal}.
If \(F\) is not formally real, then \(W(F)\) is a local ring of Krull dimension \(0\) and \(I(F)\) is the maximal ideal, see \cite[Prop. 31.4]{Elman_Karpenko_Merkurjev}.
We can form a graded \(W(F)\)--algebra \(I^\ast(F)=I^n(F)\), where for \(n\) positive \(I^n(F)\) denotes the \(n\)-th power of the fundamental ideal, and for \(n\) non-positive \(I^n(F)= W(F)\).

\begin{remark}
    Let \(k\) be a perfect field.
    By \cite[Example 3.34]{Morel_A1top} there is an homotopy module of powers of the fundamental ideal \(\bfI^\ast\), and so a motivic spectrum \(H\bfI \in \SH(k)\).
\end{remark}

\begin{theorem}[(Morel, Carlier)  \protect{\cite[Rmk. 3.12]{Morel_A1top}}, \protect{\cite[Thm. 3.12]{carlier2023milnorwittktheorywittktheory}}]\label{thm:description-of-KW}
    Let \(F\) be any field.
    The unique map of \(W(F)\)-graded algebras \(K^{W}_{\ast}(F) \to I^{\ast}(F)\) that sends 
    \begin{equation*}
        [[a]] \mapsto \langle \langle a \rangle \rangle \quad \text{and} \quad \eta \mapsto -1 \in I^{-1}(F)=W(F) 
    \end{equation*}
    is an isomorphism. 
\end{theorem}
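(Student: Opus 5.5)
The plan is to construct the map from the presentation in Definition \ref{def:KMW}, get surjectivity directly from generators, and prove injectivity by induction on the degree. The inductive step compares the \(\eta\)-filtration on \(K^W_\ast(F)\) with the \(I\)-adic filtration on \(I^\ast(F)\), using that both have associated graded \(K^M_\ast(F)/2\).

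\emph{Well-definedness.} Since \(K^W_\ast(F)=K^{MW}_\ast(F)/(h)\), it suffices to define a graded ring map \(K^{MW}_\ast(F)\to I^\ast(F)\) killing \(h\). Set \([a]\mapsto\langle\langle a\rangle\rangle\in I^1(F)\) and \(\eta\mapsto -1\in I^{-1}(F)=W(F)\). The sign convention for the Pfister form \(\langle\langle a\rangle\rangle=\pm(\langle a\rangle-1)\) must be chosen so that the logarithmic relation holds. Expanding \(\langle ab\rangle=\langle a\rangle\langle b\rangle\), this relation becomes an identity in \(W(F)\), and this identity is what fixes the sign. The remaining relations are checked as follows.
\begin{itemize}
\item Centrality is automatic, since \(I^\ast(F)\) is commutative.
\item For Steinberg, the \(2\)-fold Pfister form \(\langle\langle a,1-a\rangle\rangle\) is isotropic, hence hyperbolic, hence zero in \(I^2(F)\subset W(F)\). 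This also holds in characteristic \(2\) for bilinear Pfister forms.
\item For the hyperbolic relation, \(h=\eta[-1]+2\) maps to \(1+\langle -1\rangle\), which is zero in \(W(F)\). So \(\eta h\mapsto 0\), and indeed \(h\mapsto 0\).
\end{itemize}
Uniqueness is clear, because \(K^W_\ast(F)\) is generated by the \([a]\) and \(\eta^{\pm}\)-multiples.

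\emph{Surjectivity.} In degrees \(n\geq 1\), \(I^n(F)\) is additively generated by \(n\)-fold Pfister forms \(\langle\langle a_1,\dots,a_n\rangle\rangle\), which are the images of \([a_1]\cdots[a_n]\). In degrees \(n\le 0\), \(W(F)\) is generated by the classes \(\langle a\rangle=1\pm\eta\langle\langle a\rangle\rangle\), which lie in the image of \(\eta^{-n}K^W_0(F)\).

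\emph{Injectivity in degrees \(n\le 0\).} Here I would use Morel's identification \(K^{MW}_0(F)\cong GW(F)\), which gives \(K^W_0(F)\cong W(F)\). I would combine this with the fact that \(\eta\colon K^W_{n}(F)\to K^W_{n-1}(F)\) is an isomorphism for \(n\le 0\): it is surjective because \(K^W_{n-1}\) is generated by \(\eta^{1-n}K^W_0\), and injective by compatibility with the isomorphism \(W\to W\).

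\emph{Injectivity in degrees \(n\geq 1\), by induction on \(n\).} Assume \(K^W_n(F)\cong I^n(F)\) and compare the two rows
\begin{equation*}
K^W_{n+1}(F)\overset{\eta}{\to}K^W_n(F)\to K^M_n(F)/2\to 0
\quad\text{and}\quad
0\to I^{n+1}(F)\to I^n(F)\to I^n(F)/I^{n+1}(F)\to 0 .
\end{equation*}
The top row is exact, since \(K^W/\eta=K^{MW}/(\eta,h)=K^M/2\). The right vertical map \(K^M_n(F)/2\to I^n/I^{n+1}\) is the Milnor conjecture map, which is an isomorphism. This is due to Orlov--Vishik--Voevodsky in characteristic \(\neq 2\) and to Kato in characteristic \(2\) for bilinear forms. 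By the five lemma and the inductive hypothesis, \(\eta K^W_{n+1}(F)\) maps isomorphically onto \(I^{n+1}(F)\). It therefore remains to show that \(\eta\colon K^W_{n+1}(F)\to K^W_n(F)\) is injective.

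\emph{Main obstacle.} I expect this last step, injectivity of \(\eta\) on \(K^W_{n+1}(F)\), to be the hardest. My first attempt would be to construct an inverse \(I^{n+1}(F)\to K^W_{n+1}(F)\) directly, \(\langle\langle a_1,\dots,a_{n+1}\rangle\rangle\mapsto[a_1]\cdots[a_{n+1}]\). This requires a presentation of \(I^{n+1}(F)\) by Pfister forms with relations of Steinberg and chain-equivalence type, due to Witt and Arason--Elman. One then checks that these relations already hold in \(K^W_{n+1}(F)\), using the logarithmic relation together with \(h=0\), which gives \(\langle 1\rangle=-\langle -1\rangle\).

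If that presentation is unavailable in characteristic \(2\), the fallback is a separatedness argument. An element of \(\ker\eta\) would lie in every \(\eta\)-adic filtration step \(\eta^m K^W_{n+1+m}(F)\). Its images lie in \(\bigcap_m I^{m}(F)\), which is zero by the Arason--Pfister Hauptsatz. Combining this with the graded comparison above would then force the element to vanish.
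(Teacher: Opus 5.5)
The paper gives no proof of this statement; it cites Morel for characteristic $\neq 2$ and Carlier for characteristic $2$. So your argument has to stand on its own. Everything up to your reduction is fine: well-definedness, surjectivity, the non-positive degrees, and the use of Orlov--Vishik--Voevodsky/Kato to identify $\eta K^W_{n+1}(F)$ with $I^{n+1}(F)$. (The left square commutes only up to the sign coming from $\eta\mapsto -1$, which is harmless.) But the step you leave open, injectivity of $\eta\colon K^W_{n+1}(F)\to K^W_n(F)$, is not a residual detail. Given the inductive hypothesis, it is equivalent to injectivity of $\phi_{n+1}\colon K^W_{n+1}(F)\to I^{n+1}(F)$, that is, to the theorem itself in degree $n+1$. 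Equivalently, it says that $K^W_\ast(F)$ has no $\eta$-torsion. So the proposal reformulates the statement rather than proving it.

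Your fallback does not close this gap; it is circular. Iterating your graded comparison shows $\ker\phi_{n+1}\subseteq\bigcap_m\eta^mK^W_{n+1+m}(F)$. The Hauptsatz shows that $\phi_{n+1}$ kills this intersection, since the image lies in $\bigcap_m I^{n+1+m}(F)=0$. Together these give only $\ker\phi_{n+1}=\bigcap_m\eta^mK^W_{n+1+m}(F)$, and nothing forces this intersection to vanish. The Hauptsatz controls the $I$-adic filtration of $W(F)$, not the $\eta$-adic filtration of $K^W_\ast(F)$, and the two can only be compared through $\phi$, whose injectivity is exactly what is at stake. Concretely, take $F$ finitely generated over a perfect field of characteristic $2$ with imperfection degree $j$. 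Then $I^{j+1}(F)=0$ and $K^M_{j+1}(F)/2=0$, and your argument yields only $K^W_N(F)=\eta K^W_{N+1}(F)$ for $N>j$. That is $\eta$-divisibility, not the vanishing the theorem asserts. So all the real content sits in your first option. There you would need a presentation of $I^{n+1}(F)$ as an abelian group on $(n+1)$-fold Pfister forms (bilinear ones in characteristic $2$), together with a check that each relation holds in $K^W_{n+1}(F)$. Such presentations are substantial results resting on the Milnor conjecture or Kato's theorem: Arason--Elman away from $2$, and in characteristic $2$ results of this kind are due to Arason--Baeza. You neither supply nor verify one, and you explicitly leave open whether it exists in characteristic $2$, which is the case this paper actually uses. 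The proof is therefore incomplete at its essential step.
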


\begin{corollary}
    Let \(k\) be a perfect field. 
    Since homotopy modules are unramified, the isomorphism of Theorem \ref{thm:description-of-KW} lifts to an isomorphism of homotopy modules \(\bfK^{W}_{\ast} \to \bfI^{\ast} \in  \Shv_{Nis}(\Smk, \mathrm{Ab})^{\bfZ}\), and consequently to an equivalence of motivic spectra \(\rmH\bfK^{W}\to \rmH\bfI \in \SH(k)\).
\end{corollary}

\begin{theorem}[(Kato, Orlov-Vishik-Voevodsky) \protect{\cite[Thm. 2.2.3]{deglise2025notesmilnorwittktheory}}]\label{thm:quotient-of-fundamental-ideal}
    Let \(F\) be any field. The map 
    \begin{equation*}
        F^\times \longrightarrow I(F), \quad u \mapsto \langle \langle u \rangle \rangle
    \end{equation*}
    induces a ring isomorphism
     \begin{equation*}
         K^M_\ast(F)/2 \longrightarrow \bigoplus_{d \geq 0} I^d(F)/I^{d+1}(F).
     \end{equation*}
\end{theorem}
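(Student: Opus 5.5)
The plan is to build the map by hand, check that it is well defined and surjective by elementary manipulations with Pfister forms, and reduce injectivity to a comparison with a cohomological invariant. That comparison splits according to whether \(\mathrm{char}(F)\neq 2\) (Orlov--Vishik--Voevodsky) or \(\mathrm{char}(F)=2\) (Kato).

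\emph{Definition of the map.} In degree \(d\), send the symbol \(\{u_1,\dots,u_d\}\) to the class of the \(d\)-fold Pfister form \(\langle\langle u_1,\dots,u_d\rangle\rangle=\langle\langle u_1\rangle\rangle\otimes\cdots\otimes\langle\langle u_d\rangle\rangle\) in \(I^d(F)/I^{d+1}(F)\). The following checks make this well defined.
\begin{itemize}
\item \emph{Multilinearity.} The identity \(\langle\langle ab\rangle\rangle=\langle\langle a\rangle\rangle+\langle\langle b\rangle\rangle-\langle\langle a,b\rangle\rangle\) in \(W(F)\) shows that \(u\mapsto\langle\langle u\rangle\rangle\) is additive modulo \(I^2\). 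Multiplying by \(I^{d-1}\) gives additivity in each slot modulo \(I^{d+1}\).
\item \emph{Steinberg relation.} The form \(\langle\langle a,1-a\rangle\rangle\) is isotropic, and an isotropic Pfister form is hyperbolic, so its class is \(0\) in \(W(F)\).
\item \emph{Killing \(2\).} We have \(2\langle\langle u\rangle\rangle=\langle\langle -1,u\rangle\rangle\in I^2\) up to sign conventions, and in characteristic \(2\) one has \(2=0\) in \(W(F)\). Hence the map factors through \(K^M_\ast(F)/2\).
\item \emph{Ring structure.} Compatibility with products is immediate, since Pfister forms multiply by tensor product.
\end{itemize}
This yields a graded ring homomorphism. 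Alternatively, one could obtain it from the surjection \(K^W_\ast(F)\to I^\ast(F)\) of Theorem~\ref{thm:description-of-KW} by reducing modulo \(\eta\), using \(K^W_\ast/\eta\simeq K^M_\ast/2\).

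\emph{Surjectivity.} \(I(F)\) is additively generated by the forms \(\langle\langle u\rangle\rangle\). Therefore \(I^d(F)\) is additively generated by \(d\)-fold Pfister forms, each of which lies in the image.

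\emph{Injectivity.} This is the substantive part, and I would not attempt it symbolically. Instead I would interpose a third object \(H^d(F)\) and prove that both \(K^M_d(F)/2\to H^d(F)\) and \(I^d/I^{d+1}\to H^d(F)\) are isomorphisms compatible with the map above.
\begin{itemize}
\item \emph{Case \(\mathrm{char}(F)\neq 2\).} Take \(H^d(F)=H^d_{et}(F,\bfZ/2)\). The norm residue map \(K^M_d(F)/2\to H^d_{et}(F,\bfZ/2)\) is an isomorphism by Voevodsky's proof of the Milnor conjecture. Orlov--Vishik--Voevodsky then show that the Arason invariant \(e_d\colon I^d\to H^d_{et}\) is well defined with kernel exactly \(I^{d+1}\). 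Their key input is that a form in \(I^d\) with trivial \(e_d\) lies in \(I^{d+1}\), which they obtain from motivic cohomology of Pfister quadrics. The composite of our map with \(e_d\) is the norm residue map, so injectivity follows.
\item \emph{Case \(\mathrm{char}(F)=2\).} Take \(H^d(F)=\nu(d)(F)=\ker\bigl(\Omega^d_{F,\log}\to\Omega^d_F/d\Omega^{d-1}_F\bigr)\), the logarithmic differentials. The dlog map \(\{u_1,\dots,u_d\}\mapsto \frac{du_1}{u_1}\wedge\cdots\wedge\frac{du_d}{u_d}\) gives \(K^M_d(F)/2\cong\nu(d)(F)\) by Bloch--Kato--Gabber. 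Kato constructs \(I^d/I^{d+1}\to\nu(d)(F)\), sending \(\langle\langle u_1,\dots,u_d\rangle\rangle\) to the same logarithmic form, and proves it is an isomorphism. The proof uses the Cartier operator and an inductive analysis over \(F\) and \(F(T)\) via residue maps.
\end{itemize}

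The main obstacle is the injectivity step: showing that the cohomological invariant of forms in \(I^d\) detects \(I^{d+1}\). This amounts to an Arason--Pfister type statement together with the deep Milnor/Bloch--Kato input, and I expect to cite those results rather than reprove them. If a uniform route were wanted, I would first try the Bass--Tate/Milnor exact sequences for \(F(T)\) with residue maps: show that both sides satisfy the same localization sequences, and reduce to residue fields by induction on transcendence degree. The essential difficulty would still remain at the level of the base field.
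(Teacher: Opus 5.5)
Your proposal is correct and follows the same route as the paper. The paper gives no argument of its own: it cites the theorem (via D\'eglise's notes) to Orlov--Vishik--Voevodsky when \(\mathrm{char}(F)\neq 2\) and to Kato when \(\mathrm{char}(F)=2\), which are exactly the inputs you invoke for injectivity. Your well-definedness and surjectivity checks are the standard elementary part.

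There are two harmless slips, neither of which affects the argument. First, \(\nu(d)(F)\) should be the group \(\Omega^d_{F,\mathrm{log}}\) itself, equivalently \(\ker(C^{-1}-1\colon \Omega^d_F\to\Omega^d_F/d\Omega^{d-1}_F)\), not the kernel of the projection of log forms to \(\Omega^d_F/d\Omega^{d-1}_F\). Second, in characteristic \(2\) an isotropic bilinear Pfister form is metabolic rather than hyperbolic, but it still vanishes in \(W(F)\).
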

This implies there is a short exact sequence of \(\bfZ\)-graded abelian groups
\begin{equation*}
    0 \longrightarrow I^{\ast+1}(F) \longrightarrow I^{\ast}(F) \longrightarrow \K^M_{\ast}(F)/2 \longrightarrow 0.
\end{equation*}
This sequence also exists at the level of motivic spectra.
\begin{corollary}
    Let \(k\) be a perfect field. There is a cofiber sequence in \(\SH(k)\) 
    \begin{equation}\label{eq:short-exact-seq-Kato}
        \rmH\bfK^W \otimes \, \bfG_m \longrightarrow \rmH\bfK^W \longrightarrow \rmH\bfK^M/2.
    \end{equation}
\end{corollary}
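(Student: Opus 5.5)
The cofiber sequence will be built in the heart \(\SH(k)^{\heartsuit}\simeq \bfH\bfI(k)\) and then transported to \(\SH(k)\) by the fully faithful functor \(\rmH\). The point is that a short exact sequence of homotopy modules \(0\to A_\ast\to B_\ast\to C_\ast\to 0\) gives a cofiber sequence \(\rmH A\to \rmH B\to \rmH C\) in \(\SH(k)\). Indeed, the heart of a t-structure on a stable category has the property that short exact sequences in the heart are exactly the fiber sequences whose three terms all lie in the heart. So it suffices to produce a short exact sequence of homotopy modules
\begin{equation*}
    0\longrightarrow \bfI^{\ast+1}\longrightarrow \bfI^{\ast}\longrightarrow \bfK^M_\ast/2\longrightarrow 0,
\end{equation*}
and to identify its terms with \(\pi_0(\rmH\bfK^W\otimes\bfG_m)_\ast\), \(\pi_0(\rmH\bfK^W)_\ast\) and \(\pi_0(\rmH\bfK^M/2)_\ast\).

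\textbf{Identifying the terms.} By the Corollary following Theorem \ref{thm:description-of-KW}, we have \(\rmH\bfK^W\simeq \rmH\bfI\). Tensoring with \(\bfG_m\) shifts the grading of the associated homotopy module, so \(\pi_0(\rmH\bfI\otimes\bfG_m)_n=\bfI^{n+1}\); this uses that \(\pi_0(E\otimes\bfG_m)_n=\pi_0(E)_{n+1}\) directly from the definition of \(\pi_i(-)_j\). The remark on Izhboldin's theorem identifies \(\rmH\bfK^M/2\) with the cokernel of \(2\) in the heart, i.e.\ with the homotopy module \(\bfK^M_\ast/2\). In characteristic \(\neq 2\) one still needs \(\rmH\bfK^M/2\) to lie in the heart; there I would use that \(2\) is injective on \(K^M_\ast(F)\) modulo the appropriate torsion, or simply interpret \(\rmH\bfK^M/2\) as the heart cokernel via the admissible relations \((\eta,2)\).

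\textbf{Maps and exactness.} The map \(\bfI^{\ast+1}\to\bfI^\ast\) is the inclusion \(I^{n+1}(F)\subset I^n(F)\) on fields. In terms of \(\bfK^W\), it is multiplication by \(\eta\), which corresponds to \(-1\in W(F)\) up to sign and is therefore compatible with the homotopy-module structure. The map \(\bfI^\ast\to\bfK^M_\ast/2\) is induced by Theorem \ref{thm:quotient-of-fundamental-ideal}, i.e.\ by the quotient \(\bfK^W_\ast\to \bfK^W_\ast/\eta\cong\bfK^{MW}_\ast/(h,\eta)=\bfK^M_\ast/2\), since \(h\equiv 2 \bmod \eta\). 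Both are morphisms of homotopy modules because they come from quotients by admissible relations (Lemma \ref{lemma:quotient-by-admissible-relations}). Exactness is checked after applying the conservative, exact forgetful functor to graded sheaves. A sequence of unramified sheaves is exact once it is exact on stalks, and for homotopy modules (strictly \(\bfA^1\)-invariant) injectivity and exactness can be tested on finitely generated fields \(F/k\) by Morel's unramifiedness theory. On fields, exactness is exactly the short exact sequence deduced from Theorem \ref{thm:quotient-of-fundamental-ideal}.

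The main obstacle is this reduction from sheaves to fields. Injectivity of a map of homotopy modules passes to fields easily, because sections embed into those of the generic point. Surjectivity and exactness in the middle are not automatic from field values alone. I would handle this by realizing the sequence as \(0\to \eta\bfK^W\to\bfK^W\to\bfK^W/\eta\to0\), which is a genuine short exact sequence in the abelian category \(\bfH\bfI(k)\) since the cokernel is the quotient by admissible relations. It then remains to show that \(\eta:\bfK^W_{\ast+1}\to\bfK^W_\ast\) is a monomorphism of homotopy modules. Because kernels in \(\bfH\bfI(k)\) are again homotopy modules and hence unramified, this reduces to injectivity of \(\eta\) on \(K^W_\ast(F)\), i.e.\ to \(I^{n+1}(F)\hookrightarrow I^n(F)\), which holds on fields.
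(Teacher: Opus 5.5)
Your proposal is correct and follows essentially the paper's route: you establish \(0\to\bfI^{\ast+1}\to\bfI^\ast\to\bfK^M_\ast/2\to0\) in the heart by reducing to finitely generated fields, then promote it to a cofiber sequence. The differences are minor: you use the general fact about hearts where the paper runs the long exact sequence for the fiber, and you get the cokernel formally as \(\bfK^{MW}_\ast/(h,\eta)\), needing only injectivity of \(\eta\), rather than citing Theorem \ref{thm:quotient-of-fundamental-ideal}.

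One correction to your hedge: your first option, injectivity of \(2\) modulo torsion, cannot work. In characteristic \(\neq 2\) the cofiber \(\rmH\bfK^M/2\) has \(\pi_1\) equal to the \(2\)-torsion of \(\bfK^M_\ast\), which contains \(\{-1\}\neq0\), so it is never in the heart and only the heart-cokernel reading survives there. That is the same tacit assumption the paper's proof makes when it says the long exact sequence vanishes outside the illustrated part, and it is harmless because the corollary is only used in characteristic \(2\).
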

\begin{proof}
    To prove the statement in \(\SH(k)^{\heartsuit}\) we just need to verify that \(\pi_0(\rmH \K^{W} \otimes \, \bfG_m)_{\ast}\) is the kernel of the induced map 
    \(\pi_0(\rmH\bfK^W)_{\ast} \to \pi_0(\rmH\bfK^M/2)_{\ast}\). We can check this on henselian affine ind-smooth \(k\)-schemes, and we can reduce checking it on fields. For these, we just have to notice that \(\pi_0(\rmH \K^{W})_{\ast+1} \overset{\simeq}{\to}\pi_0(\rmH \K^{W} \otimes \, \bfG_m)_{\ast}\) is an equivalence, and use Theorem \ref{thm:description-of-KW} and Theorem \ref{thm:quotient-of-fundamental-ideal}. 
    
    Next, consider the fiber \(P\) of the map \( \rmH\bfK^W \longrightarrow \rmH\bfK^M/2\) in \(\SH(k)\). By considering  the long exact sequence, we get
    \begin{gather*}
        \cdots \to 0 \to \pi_0(P) \to \pi_0(\rmH\bfK^{W}) \to \pi_0(\rmH\bfK^{M}/2)  \to \pi_{-1}(P) \to 0 \to \cdots
    \end{gather*}
    which vanishes outside the illustrated part. But this sequences is 
    \begin{gather*}
         0\to \pi_0(P) \to \rmH\bfK^{W} \to \rmH\bfK^{M}/2  \to \pi_{-1}(P) \to 0 
    \end{gather*}
    and because \(\mathrm{coker}(\rmH\bfK^{W} \to \rmH\bfK^{M}/2) \simeq 0\), we get that \(\pi_{-1}(P)\) must vanish. Therefore, the map \(P \to \rmH\bfK^{MW}\) is an equivalence.
\end{proof}

\subsection{A square involving K-theories}
A key result for us is the following theorem, which offers a description of Milnor-Witt K-theory in terms of Milnor and Witt K-theory.
\begin{theorem}[(Kato, Carlier in char. 2) \cite{Kato82}, \protect{\cite[Corollary 2.3.7]{deglise2025notesmilnorwittktheory}}, \cite{carlier2023milnorwittktheorywittktheory}]\label{thm:squareKMW-Algebras}
    Let \(F\) be any field. Then there is a cartesian square of \(\bfZ\)-graded algebras
    \begin{equation*}
        \begin{tikzcd}
        	{\K^{MW}_{\ast}(F)} &&& {\K^{W}_{\ast}(F)} \\
        	\\
        	{\K^{M}_{\ast}(F)} &&& {\K^{M}_{\ast}(F)/2.}
        	\arrow[from=1-1, to=1-4]
        	\arrow[from=1-1, to=3-1]
        	\arrow[from=1-4, to=3-4]
        	\arrow[from=3-1, to=3-4]
        \end{tikzcd}
    \end{equation*}
\end{theorem}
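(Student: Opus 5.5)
We need to prove that for any field \(F\) the square of graded rings is cartesian, i.e. that the natural map \(K^{MW}_\ast(F) \to K^M_\ast(F) \times_{K^M_\ast(F)/2} K^W_\ast(F)\) is bijective. Since all maps in the square are graded ring maps, it suffices to check this degree by degree. We split the degrees into \(n \le 0\) and \(n \ge 1\), and use Theorem \ref{thm:description-of-KW} throughout to identify \(K^W_n(F)\) with \(I^n(F)\).

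\textbf{Degrees \(n \le 0\).}
\begin{itemize}
\item For \(n<0\) we have \(K^M_n=0\). Multiplication by \(\eta\) identifies \(K^{MW}_n(F)\) with \(W(F)\), because \(\eta h=0\) forces \(K^{MW}_{n}=K^{MW}_n/h\) in negative degrees. So the square reduces to the identity \(W(F)\to W(F)\).
\item For \(n=0\) we have \(K^{MW}_0(F)\cong GW(F)\), via \(\langle u\rangle=1+\eta[u]\). The claim then becomes the classical pullback \(GW(F)\cong \bfZ\times_{\bfZ/2}W(F)\), given by rank and Witt class. This holds because the kernel of \(GW\to W\) is \(\bfZ h\), and \(\mathrm{rk}\) is injective on it.
\end{itemize}

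\textbf{Degrees \(n \ge 1\).} Let \(P_n\) denote the fibre product \(K^M_n(F)\times_{K^M_n(F)/2} I^n(F)\).
\begin{itemize}
\item \emph{Surjectivity.} \(P_n\) is generated by the images of symbols \([a_1]\cdots[a_n]\) together with \((0,x)\) for \(x\in I^{n+1}(F)\). This follows from Theorem \ref{thm:quotient-of-fundamental-ideal}: the kernel of \(I^n\to K^M_n/2\) is \(I^{n+1}\), and \(2K^M_n\) is the image of \(h\cdot\)symbols. Each generator lifts: \(\eta[a_0]\cdots[a_n]\) maps to \((0,\pm\langle\langle a_0,\dots,a_n\rangle\rangle)\), and \(h[a_1]\cdots[a_n]\) maps to \((2[a_1]\cdots[a_n],0)\).
\item \emph{Injectivity.} Let \(x\in K^{MW}_n(F)\) have zero image in both \(K^M_n\) and \(K^W_n\). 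Then \(x\in(\eta)\cap(h)\), and I want to show this intersection is \(0\).
  \begin{itemize}
  \item Write \(x=\eta y\) with \(y\in K^{MW}_{n+1}\). The image of \(x\) in \(K^W\) is then \(\eta\bar y\).
  \item By Theorem \ref{thm:description-of-KW}, \(\eta\) acts injectively on \(K^W_\ast\), since it corresponds to the inclusion \(I^{n+1}\subset I^n\). So \(\eta\bar y=0\) forces \(\bar y=0\), i.e. \(y\in hK^{MW}_{n+1}\).
  \item Hence \(x\in \eta h K^{MW}=0\), using the hyperbolic relation \(\eta h=0\) and the centrality of \(\eta\).
  \end{itemize}
\end{itemize}
Note that injectivity does not need the full symbolic control required for surjectivity; it only uses the hyperbolic relation and injectivity of \(\eta\) on \(K^W_\ast\).

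\textbf{Main obstacle.} The key input is Theorem \ref{thm:description-of-KW}, which gives \(K^W_\ast\cong I^\ast\), together with the statement that \(\ker(K^{MW}\to K^W)\) is exactly \((h)\). The latter holds by definition of \(K^W\).

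In characteristic \(2\), the delicate points are these:
\begin{itemize}
\item the identification \(K^{MW}_0\cong GW\) for bilinear forms;
\item the description of the kernel of \(I^n\to K^M_n/2\) (Theorem \ref{thm:quotient-of-fundamental-ideal}), which in characteristic \(2\) is due to Kato;
\item the surjectivity step, which needs the generation claim for the fibre product.
\end{itemize}
I would take all three from Carlier's work, which handles exactly these points, and then the argument above goes through uniformly in every characteristic.
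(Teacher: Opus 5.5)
Your proof is correct, but it takes a different route from the paper. The paper gives no argument for this statement and simply imports it from Kato, D\'eglise and Carlier. You instead derive it from Theorem \ref{thm:description-of-KW}, Theorem \ref{thm:quotient-of-fundamental-ideal} and \(K^{MW}_0(F)\cong GW(F)\).

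Your injectivity step carries the real content, and it is sound in every degree. The kernel of \(K^{MW}_\ast\to K^M_\ast\) is \(\eta K^{MW}_\ast\) because \(\eta\) is central. Multiplication by \(\eta\) is injective on \(K^W_\ast\cong I^\ast\). Finally \(\eta\cdot(h)=0\).

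In fact nothing else is needed. For any ring \(R\) and two-sided ideals \(A,B\), the map \(R\to R/A\times_{R/(A+B)}R/B\) is surjective: if \(r_1-r_2=a+b\), then \(r_1-a=r_2+b\) lifts the pair. Its kernel is \(A\cap B\). Take \(R=K^{MW}_\ast(F)\), \(A=(\eta)\) and \(B=(h)\); then \(R/(A+B)=K^M_\ast(F)/2\), because \(h\equiv 2\) modulo \(\eta\). So the square is cartesian exactly when \((\eta)\cap(h)=0\), which is what your injectivity argument shows.

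The following parts of your proof are therefore superfluous:
\begin{itemize}
\item the case split into degrees \(n<0\), \(n=0\), \(n\geq 1\);
\item the generation of \(P_n\), where you also tacitly use that \(I^{n+1}\) is additively generated by \((n+1)\)-fold Pfister forms;
\item Kato's theorem and \(K^{MW}_0\cong GW\).
\end{itemize}
The last two even follow from your setup. Kato's theorem comes from \(I^n/I^{n+1}\cong K^W_n/\eta K^W_{n+1}=K^M_n/2\), and \(K^{MW}_0\cong GW\) from \(K^{MW}_0\cong \mathbf{Z}\times_{\mathbf{Z}/2}W(F)\).

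What your route buys is the observation that the square is a formal consequence of \(\eta\)-torsion-freeness of \(K^W_\ast(F)\). What it does not buy is independence from the literature. In Morel's and Carlier's work, the identification \(K^W_\ast\cong I^\ast\) and the fibre-product description are proved together. All the depth sits in Theorem \ref{thm:description-of-KW}: the Milnor conjecture away from characteristic \(2\), and Kato and Carlier in characteristic \(2\). Within the paper, where both results are cited, your deduction is legitimate.
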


\begin{corollary}\label{cor:squareKMW-Algebras}
    Let \(k\) be a perfect field.  Then there is a cartesian square in \(\SH(k)\)
    \begin{equation}\label{eq:right-square}
        \begin{tikzcd}
        	{\rmH\K^{MW}} &&& {\rmH\K^{W}} \\
        	\\
        	{\rmH\K^{M}} &&& {\rmH\K^{M}/2.}
        	\arrow[from=1-1, to=1-4]
        	\arrow[from=1-1, to=3-1]
        	\arrow[from=1-4, to=3-4]
        	\arrow[from=3-1, to=3-4]
        \end{tikzcd}
    \end{equation}
\end{corollary}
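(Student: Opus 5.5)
We need to prove Corollary \ref{cor:squareKMW-Algebras}: the square \eqref{eq:right-square} is cartesian in \(\SH(k)\). The plan is to reduce to the heart, verify exactness there on fields, and then pass from a short exact sequence in the heart to a cofiber sequence in \(\SH(k)\).

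First I construct the square. All four corners are homotopy modules: \(\rmH\K^{MW}\), \(\rmH\K^{W}\), \(\rmH\K^{M}\) and \(\rmH\K^{M}/2\). For the last, the earlier remark using Izhboldin's theorem shows that in characteristic \(2\) the cofiber agrees with the cokernel in the heart. In characteristic \(\neq 2\), \(K^M_\ast(F)\) can have \(2\)-torsion. In that case I would instead use the heart quotient \(\K^{MW}/(\eta,2)\), which is a homotopy module by Lemma \ref{lemma:quotient-by-admissible-relations}; this is evidently the object meant in the square, since it appears as a quotient of \(\rmH\K^{W}\). The maps are induced by quotients of admissible relations. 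The ideals satisfy \((\eta)+(h) = (\eta,2)\), because \(h=2+\eta[-1]\). Hence both composites \(\K^{MW}\to\K^{M}/2\) agree on fields, and therefore agree as maps of unramified sheaves. Since \(\rmH\) is fully faithful, the square commutes in \(\SH(k)\).

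Second, I show the square is cartesian in the abelian category \(\SH(k)^\heartsuit\simeq \bfH\bfI(k)\). Equivalently, I show the sequence
\[
0\to \K^{MW}_\ast \to \K^{M}_\ast\oplus\K^{W}_\ast \to \K^{M}_\ast/2 \to 0
\]
is exact. Exactness can be checked on underlying graded Nisnevich sheaves, because the forgetful functor to \(\Shv_{Nis}(\Smk,\mathrm{Ab})^{\bfZ}\) is conservative and preserves limits and colimits. These are unramified sheaves, so I reduce to stalks, and then, by unramifiedness and the defining kernels \eqref{eq:unramifiedness-def-valuations}, \eqref{eq:unramifiedness-def-schemes}, to finitely generated fields \(F/k\). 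This is the same reduction used in the proof of the cofiber sequence \eqref{eq:short-exact-seq-Kato}. On fields, left exactness together with exactness in the middle is exactly Theorem \ref{thm:squareKMW-Algebras}. Surjectivity on the right holds because \(K^M_\ast(F)\to K^M_\ast(F)/2\) is already surjective.

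Third, I pass from the heart to \(\SH(k)\). A short exact sequence in the heart of a t-structure is a cofiber sequence in the stable category. Hence \(\rmH\K^{MW}\to\rmH\K^{M}\oplus\rmH\K^{W}\to\rmH\K^{M}/2\) is a cofiber sequence, and in a stable category this is equivalent to the square being cartesian. Alternatively, I can take the fiber \(P\) and run the long exact sequence argument from the proof of \eqref{eq:short-exact-seq-Kato}.

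I expect the main obstacle to be the reduction of exactness from sheaves to fields. Injectivity and exactness of sheaf sections follow from unramifiedness plus exactness at \(F\) and at the residue maps. For surjectivity at stalks, however, one must check that the sections of \(\K^M/2\) on a local ring lift. I would handle this by working in the abelian category of homotopy modules, where cokernels are computed sheafily and exactness can be tested on fields via Morel's theory of unramified sheaves (\cite[Ch. 2]{Morel_A1top}): a morphism of homotopy modules is an isomorphism, or is zero, if it is so on all finitely generated fields. I would apply this to the homology of the three-term complex, which is itself a homotopy module.
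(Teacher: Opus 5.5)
Your proposal is correct and follows essentially the same route as the paper. Both build the square in the heart, verify exactness of $0\to\K^{MW}_\ast\to\K^M_\ast\oplus\K^W_\ast\to\K^M_\ast/2\to 0$ on finitely generated fields via unramifiedness and Theorem \ref{thm:squareKMW-Algebras}, and then pass to $\SH(k)$; your ``short exact sequence in the heart is a cofiber sequence'' step is equivalent to the paper's long exact sequence for the pullback $P$. Your use of the heart quotient $\K^{MW}_\ast/(\eta,2)$ in characteristic $\neq 2$ is a precision the paper leaves implicit. There the cofiber of $2$ on $\rmH\K^M$ has nonzero $\pi_1$ (containing the $2$-torsion class $[-1]\in K^M_1$), which would break the paper's claim that its long exact sequence vanishes outside degrees $0$ and $-1$.
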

\begin{proof}
    We start by constructing the square in \(\SH(k)^\heartsuit\). Recall from \cite[Lemma 1.2]{Bachmann_GenSlicesKO} there is  a limit and colimit preserving conservative map
    \begin{equation*}
        \sigma: \SH(k)^\heartsuit \to \mathrm{Ab}(\Shv_{Nis}(\Smk, \Ani)_{\leq 0})^\bfZ=(\Shv_{Nis}(\Smk, \mathrm{Ab}))^{\bfZ}, \quad E \mapsto \pi_0(E)_{\ast}.
    \end{equation*}
    Therefore, it is enough to prove the statement at the level of sheaves. 
    Since we are working with Nisnevich sheaves on \(\Smk\) it is enough to show the equivalence when valued on henselian local ind-smooth \(k\)-schemes (the points of the Nisnevich topology on  \(\Smk\)).
    Let \(R\) be a henselian ind-smooth (over \(k\)) local ring. Recall that we defined
    \begin{equation*}
        \K^{MW}_{\ast}(R) = \bigcap_{p \in R^{(1)}} \K^{MW}_{\ast}(R_p) \subset \K^{MW}_{\ast}(Frac(R)),
    \end{equation*}
    where the intersection is taken on height-one prime ideals, and
    \begin{equation*}
        \K^{MW}_{\ast}(R_p) = \ker (\K^{MW}_{\ast}(Frac(R)) \overset{\partial}{\longrightarrow} \K^{MW}_{\ast-1}(Frac(R/p)))
    \end{equation*}
    is the kernel of the residue morphism
    \footnote{We are writing \(\partial\) instead of \(\partial^{\varpi}_{\nu}\), but the morphism itself is really dependent on the choice of a uniformizer for \(R_p\).}.
    All the sheaves we are dealing with are defined by the first two terms of the Rost-Schmid complex. 
    \footnotesize
    \begin{equation*}
        \begin{tikzcd}[cramped,sep=small]
    	{\K^{MW}_{\ast}(R_p)} && {\K^{W}_{\ast}(R_p)} \\
    	\\
    	{\K^{M}_{\ast}(R_p)} && {\K^{M}_{\ast}(R_p)/2} \\
    	& {\K^{MW}_{\ast}(Frac(R))} && {\K^{W}_{\ast}(Frac(R))} \\
    	\\
    	& {\K^{M}_{\ast}(Frac(R))} && {\K^{M}_{\ast}(Frac(R))/2} \\
    	&& {\K^{MW}_{\ast-1}(Frac(R/p))} && {\K^{W}_{\ast-1}(Frac(R/p))} \\
    	\\
    	&& {\K^{M}_{\ast-1}(Frac(R/p))} && {\K^{M}_{\ast-1}(Frac(R/p))/2}
    	\arrow[from=1-1, to=1-3]
    	\arrow[from=1-1, to=3-1]
    	\arrow[dotted, hook, from=1-1, to=4-2]
    	\arrow[from=1-3, to=3-3]
    	\arrow[dotted, hook, from=1-3, to=4-4]
    	\arrow[from=3-1, to=3-3]
    	\arrow[dotted, hook, from=3-1, to=6-2]
    	\arrow[dotted, hook, from=3-3, to=6-4]
    	\arrow[from=4-2, to=4-4]
    	\arrow[from=4-2, to=6-2]
    	\arrow["\partial"{description}, squiggly, from=4-2, to=7-3]
    	\arrow[from=4-4, to=6-4]
    	\arrow["\partial"{description}, squiggly, from=4-4, to=7-5]
    	\arrow[from=6-2, to=6-4]
    	\arrow["\partial"{description}, squiggly, from=6-2, to=9-3]
    	\arrow["\partial"{description}, squiggly, from=6-4, to=9-5]
    	\arrow[from=7-3, to=7-5]
    	\arrow[from=7-3, to=9-3]
    	\arrow[from=7-5, to=9-5]
    	\arrow[from=9-3, to=9-5]
    \end{tikzcd}
    \end{equation*}
    \normalsize
    Now \(\bigcap_{p \in R^{(1)}} \K^{MW}_{\ast}(R_p)\) is the limit in \(\mathrm{Ab}\) 
    \begin{equation*}
        \lim_{p \in R^{(1)}} ( \K^{MW}_{\ast}(R_p) \hookrightarrow \K^{MW}_{\ast}(Frac(R)) ).
    \end{equation*}
    By commuting limits we obtain the square in \(\SH(k)^{\heartsuit}\).\\ To obtain a cartesian square in \(\SH(k)\), we consider the pullback \(P\) in \(\SH(k)\) of
    \begin{equation*}
        \rmH\bfK^{M} \to \rmH\bfK^{M}/2 \leftarrow \rmH\bfK^{W}.
    \end{equation*}
    We get a long exact sequence of homotopy sheaves
    \begin{gather*}
        \cdots \to 0\to \pi_0(P) \to \pi_0(\rmH\bfK^{M}) \oplus \pi_0(\rmH\bfK^{W}) \to \pi_0(\rmH\bfK^{M}/2)  \to \pi_{-1}(P) \to 0 \to \cdots
    \end{gather*}
    which vanishes outside the illustrated part. But this sequence is 
    \begin{gather*}
         0\to \pi_0(P) \to \rmH\bfK^{M} \oplus \rmH\bfK^{W} \to \rmH\bfK^{M}/2  \to \pi_{-1}(P) \to 0 
    \end{gather*}
    and because both \(\mathrm{coker}(\rmH\bfK^{M} \to \rmH\bfK^{M}/2) \simeq 0\) and \(\mathrm{coker}(\rmH\bfK^{W} \to \rmH\bfK^{M}/2) \simeq 0\), \(\pi_{-1}(P)\) must vanish. Therefore, the map \(P \to \rmH\bfK^{MW}\) is an equivalence.
\end{proof}
\begin{remark}\label{rmk:fields}
    Let \(k\) be a perfect field.
    Using the same techniques as in the proof, we can show that every a map of unramified sheaves is an equivalence if it is an equivalence when valued on every finitely generated field extension of \(k\).
    In particular, this holds for all homotopy modules, and so for the homotopy sheaves \(\pi_i(-)_j\).
    It follows that a map \(f:E \to F \in \SH(k)\) is an equivalence if and only if \(\pi_i(f)_k\) is an equivalence when valued on fields. 
\end{remark}

\newpage
\section{Characteristic not 2, after Bachmann}\label{sec:BachmannProof}
Theorem \ref{thm:Morel-str-conj} in characteristic \(\neq 2\) is a reformulation of the following theorem.
\begin{theorem}[\protect{\cite[Thm. 17]{Bachmann_GenSlicesKO}}]\label{thm:MSC-char-not-two}
    Let \(k\) be a perfect field of characteristic \(p \neq 2\).
    The natural maps 
        \begin{equation*}
            \HZtilde \to \rmH\K^{MW} \quad \text{and} \quad \HWZ \to \rmH\K^{W}
        \end{equation*}
    induce isomorphisms on \(\pi_0(\bullet)_{\ast}\). 
    Moreover, the natural maps 
        \begin{equation*}
            \HZtilde \to \HZ \quad \text{and} \quad \HWZ \to \HZ/2 
        \end{equation*}
    induce isomorphisms on \(\pi_i(\bullet)_{\ast}\) for \(i \neq 0\).
\end{theorem}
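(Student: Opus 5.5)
This result is quoted from Bachmann, so the plan is to reconstruct his argument. It rests on the computation of the generalized slice \(\tilde s_0(\bfK\bfQ)\) and on the fibre sequences recalled in the introduction.

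The proof splits into two statements about homotopy sheaves.

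\emph{First, the \(\pi_0\) statement.} By Remark \ref{rmk:connectiveness-motivic-cohomology}, \(f^0\) is right t-exact. Hence \(\HZtilde=f^0(\rmH\K^{MW})\) is connective, and the counit \(\HZtilde\to\rmH\K^{MW}\) factors through \(\tau_{\le 0}\HZtilde=\pi_0(\HZtilde)\). The map \(\bfS\to\rmH\K^{MW}\) comes from the source \(\bfS\), which is effective, so it factors through \(f^0\rmH\K^{MW}\). Consequently the composite
\[
\bfS\to\HZtilde\to\rmH\K^{MW}
\]
is \(\bfS\to\pi_0\bfS\simeq\rmH\K^{MW}\), which is an equivalence on \(\pi_0(\bullet)_\ast\) by Theorem \ref{thm:Motivic_Pi0_SphereSpectrum}. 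This shows that \(\pi_0(\HZtilde)_\ast\to\K^{MW}_\ast\) is surjective.

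For injectivity I will use the characterisation of \(\pi_0\) of effective spectra. For an effective connective \(E\), \(\pi_0(E)_0\) determines \(\pi_0(E)_\ast\) in nonpositive weights, and the positive weights are determined by contraction \(\Omega_{\bfG_m}\) and effectivity. Concretely, \(\pi_0(f^0E)_n\to\pi_0(E)_n\) is an isomorphism for \(n\le 0\). I plan to check this on the generators \(\Sigma^\infty_+X\) and pass to colimits. For \(n>0\) one computes via the effective homotopy t-structure. There \(\pi_0(f^0 E)_\ast\) is the homotopy module freely generated in the appropriate sense by \(\pi_0(E)_0\), and for \(E=\rmH\K^{MW}\) this recovers \(\K^{MW}_\ast\), since \(\K^{MW}_n\) is the free strictly \(\bfA^1\)-invariant sheaf on \(\bfG_m^{\wedge n}\). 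The same argument applies to \(\K^W_\ast\), using the presentation \(\K^W_\ast=\K^{MW}_\ast/(h)\) from Lemma \ref{lemma:quotient-by-admissible-relations}.

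\emph{Second, the higher \(\pi_i\).} Bachmann's computation of \(\tilde s_0(\bfK\bfQ)\) (assumed available in characteristic \(\ne 2\)) gives the two fibre sequences
\[
\HZ/2[1]\to\tilde s_0\bfK\bfQ\to\HZtilde
\qquad\text{and}\qquad
\HWZ\otimes\bfG_m\to\tilde s_0\bfK\bfQ\to\HZ.
\]
The plan is to compare \(\HZtilde\to\HZ\) by forming the fibre \(D\) and identifying \(D\) with \(f^0(\rmH\K^W)\otimes\bfG_m=f^0(\rmH\K^W\otimes\bfG_m)\). The latter equality holds because \(\bfG_m\) is effective.

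Applying \(f^0\) to the sequence \(\rmH\K^W\otimes\bfG_m\to\rmH\K^{MW}\to\rmH\K^M\) gives a fibre sequence
\[
f^0(\rmH\K^W\otimes\bfG_m)\to\HZtilde\to\HZ.
\]
Its first term is \(\eta\)-times Witt motivic cohomology, which I expect to agree with \(\HWZ\otimes\bfG_m\). The claim is then that \(\pi_i(\HWZ\otimes\bfG_m)_\ast\) vanishes for \(i\neq 0\) in the relevant range, or more precisely that it maps isomorphically on \(\pi_i\), \(i\ge1\), to \(\pi_i(\HZ/2)\). Bachmann proves this via the Milnor conjecture, in the form of Orlov--Vishik--Voevodsky, together with the cofibre sequence \eqref{eq:short-exact-seq-Kato}. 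After applying \(f^0\), that sequence gives
\[
\HWZ\otimes\bfG_m\to\HWZ\to\HZ/2,
\]
and the \(\eta\)-completion/\(I\)-adic filtration then shows that \(\HWZ\to\HZ/2\) is an isomorphism on \(\pi_i\) for \(i\ne0\). The corresponding statement for \(\HZtilde\to\HZ\) follows from the first fibre sequence by a five-lemma argument.

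\emph{Main obstacle.} I expect the hardest step to be this last one: showing that the higher homotopy sheaves of \(\HWZ\) agree with those of \(\HZ/2\). This is where the Milnor conjecture, via the identification \(I^n/I^{n+1}\cong K^M_n/2\) on fields (Theorem \ref{thm:quotient-of-fundamental-ideal}) combined with Beilinson--Lichtenbaum, is essential. It is also exactly the point that fails when \(2\) is not invertible, which is why characteristic \(2\) requires the separate treatment of Section \ref{sec:Char2}.
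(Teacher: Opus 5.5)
There is a genuine gap in the higher-homotopy step, and a smaller one in the \(\pi_0\)-injectivity step. Your surjectivity argument on \(\pi_0\) is fine. So is the fibre sequence \(f^0(\rmH\K^W\otimes\bfG_m)\to\HZtilde\to\HZ\).

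\textbf{The fibre is misidentified.} The equality \(f^0(\rmH\K^W)\otimes\bfG_m\simeq f^0(\rmH\K^W\otimes\bfG_m)\) does not follow from effectivity of \(\bfG_m\). Since \(-\otimes\bfG_m\) carries \(\SHeff(k)(n)\) equivalently onto \(\SHeff(k)(n+1)\), one has \(f^0(E)\otimes\bfG_m\simeq f^1(E\otimes\bfG_m)\) and \(f^0(E\otimes\bfG_m)\simeq f^{-1}(E)\otimes\bfG_m\). These differ by \(s^{-1}(E)\otimes\bfG_m\), and here the difference is nonzero:
\begin{itemize}
\item \(\pi_1(f^0(\rmH\K^W\otimes\bfG_m))_0=\pi_1(\rmH\K^W\otimes\bfG_m)_0=0\);
\item \(\pi_1(\HWZ\otimes\bfG_m)_0=\pi_1(\HWZ)_1\), which by the very theorem you are proving equals \(\pi_1(\HZ/2)_1\cong\mu_2\neq0\).
\end{itemize}
So neither \(\HWZ\otimes\bfG_m\to\HZtilde\to\HZ\) nor \(\HWZ\otimes\bfG_m\to\HWZ\to\HZ/2\) is a fibre sequence. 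Applying \(f^0\) to \(\rmH\K^W\otimes\bfG_m\to\rmH\K^{MW}\to\rmH\K^M\) and to \eqref{eq:short-exact-seq-Kato} shows that \(\HZtilde\to\HZ\) and \(\HWZ\to\HZ/2\) both have fibre \(D:=f^0(\rmH\K^W\otimes\bfG_m)\). The sequences from the introduction concern \(\tilde s_0(\bfK\bfQ)\), which differs from \(\HZtilde\) by \(\HZ/2[1]\). Comparing them only tells you that \(D\) is the cofibre of a map \(\HZ/2[1]\to\HWZ\otimes\bfG_m\).

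\textbf{What actually has to be proved.} Granting the \(\pi_0\) part, the second half of the theorem is equivalent to the counit \(D\to\rmH\K^W\otimes\bfG_m\) being an equivalence. That is, it amounts to effectivity of \(\rmH\K^W\otimes\bfG_m\simeq\rmH\bfI^{\ast+1}\) in characteristic \(\neq2\), the analogue of Theorem \ref{assumption}. Your ``\(\eta\)-completion/\(I\)-adic filtration'' gives no argument for this. The layers \(\rmH\K^M/2\otimes\bfG_m^{\otimes n}\), \(n\geq1\), of the \(\eta\)-adic tower are indeed effective, by the Orlov--Vishik--Voevodsky sequence. But the tower presents \(\rmH\K^W\otimes\bfG_m\) only through a limit, and \(\lim_n\rmH\K^W\otimes\bfG_m^{\otimes n}\) need not even vanish, because of \(\lim^1\) of the \(I^n\). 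Effectivity is stable under colimits and extensions, not limits.

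Genuine input is needed here; in Bachmann's paper these homotopy sheaves are obtained as an application of the computation of \(\tilde s_0(\bfK\bfQ)\). With the corrected sequences, one way to proceed is as follows.
\begin{itemize}
\item Combine \(\HZ/2[1]\to\HWZ\otimes\bfG_m\to D\) with \(D\to\HWZ\to\HZ/2\).
\item Use Beilinson--Lichtenbaum: \(\tau\) is an isomorphism on \(\pi_{\geq1}(\HZ/2)\).
\item Induct on the weight, starting from the fact that \(f^0\) does not change homotopy sheaves in weights \(\leq0\).
\end{itemize}
This requires identifying the resulting composite \(\HZ/2[1]\otimes\bfG_m^{\otimes -1}\to\HWZ\to\HZ/2\) with \(\tau\).

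\textbf{The \(\pi_0\)-injectivity step.} Your claim that \(\pi_0(f^0E)_\ast\) is ``freely generated by \(\pi_0(E)_0\)'' is not justified, and you do not need it. The functor \(f^0\) is t-exact from the homotopy t-structure to the effective homotopy t-structure: it is right t-exact by Remark \ref{rmk:connectiveness-motivic-cohomology}, and left t-exact by adjunction. Hence \(\HZtilde=f^0(\tau_{\leq0}\bfS)\simeq\tau^{\mathrm{eff}}_{\leq0}\bfS\). The fibre \(\tau^{\mathrm{eff}}_{\geq1}\bfS\) lies in \(\SH(k)^{\mathrm{eff}}_{\geq1}\subset\SH(k)_{\geq1}\), because the generators \(\Sigma^\infty_+X\) are connective. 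So \(\pi_0(\bfS)_\ast\to\pi_0(\HZtilde)_\ast\) is an isomorphism, and together with your surjectivity this gives the claim. The same argument works for \(\HWZ\simeq\tau^{\mathrm{eff}}_{\leq0}(\bfS/h)\), since \(\pi_0(\bfS/h)_\ast\cong\K^{MW}_\ast/h=\K^W_\ast\).
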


\begin{proof}[\textbf{Proof of Theorem \ref{thm:Morel-str-conj} in characteristic \(p \neq 2\).}]\label{proof:MSC-char-not-two}
    Let \(k\) be a perfect field of characteristic \(p \neq 2\).
    Consider the pullback \(P\) of the span 
    \begin{equation*}
        \HZ \rightarrow \rmH \K^M \leftarrow \rmH \K^{MW};
    \end{equation*}
    by definition of \(\HZtilde\) there exists a canonical map \(\varphi: \HZtilde \to P\). 
    To prove that \(\varphi\) is an equivalence, it suffices to prove that \(\pi_i(\varphi)_\ast\) are isomorphisms for all \(i \in \bfZ\) since the homotopy t-structure is complete.
    There is a long exact sequence of sheaves of abelian groups
    \begin{equation*}
        \cdots \to \pi_{i+1}(\rmH\K^{M})_{\ast} \to \pi_{i}(P)_{\ast} \to \pi_{i}(\rmH\K^{MW})_{\ast} \oplus \pi_{i}(\HZ)_{\ast} \to \pi_{i}(\rmH\K^{M})_{\ast} \to \cdots.
    \end{equation*}
    \begin{itemize}
        \item For \(i < 0\), \(\pi_i(\rmH\K^{MW})_{\ast}\) and \(\pi_i(\rmH\K^{M})_{\ast}\) vanish because these two motivic spectra are in the heart of the t-structure, while \(\pi_i(\HZtilde)_{\ast}\) and \(\pi_i(\HZ)_{\ast}\) vanish because as in remark \ref{rmk:connectiveness-motivic-cohomology} motivic cohomologies are connective.
            It follows that \(\pi_i(P) \simeq 0\) for all \(i \leq -1\); thus, \(\pi_{i}(\varphi)_{\ast}\) are isomorphisms for all \(i \leq -1\).
        \item For \(i \geq 1\), \(\pi_i(\rmH\K^{MW})_{\ast}\) and \(\pi_i(\rmH\K^{M})_{\ast}\) still vanish because the two motivic spectra are in the heart of the t-structure. 
            So \(\pi_{i}(P) \to \pi_{i}(\HZ)\) are equivalences for all \(i \geq 1\). 
            Using the second part of Theorem \ref{thm:MSC-char-not-two}, \(\pi_i(\varphi)\) are equivalences for all \(i \geq 1\). 
        \item It remains an exact sequence
            \begin{equation*}
                0 \to \pi_0(P)_{\ast} \to \pi_{0}(\rmH\K^{MW})_{\ast} \oplus \pi_{0}(\HZ)_{\ast} \to \pi_0(\rmH\K^{M})_{\ast} \to \pi_{-1}(P)_{\ast} \to 0,
            \end{equation*}
            which is equivalent to
             \begin{equation*}
                 0 \to \pi_0(P)_{\ast} \longrightarrow \K^{MW}_{\ast} \oplus \; \K^{M}_{\ast} \overset{\chi - id}{\longrightarrow} \K^{M}_{\ast} \to 0,
             \end{equation*}
            where we are calling \(\chi\) the canonical map  \(\rmH\K^{MW} \to \rmH\K^{MW}/\eta \simeq\rmH\K^{M}\).
            Since the map \(\chi - id\) is surjective, \(\pi_{-1}(P)_{\ast}\) vanishes; thus \(\pi_{-1}(\varphi)_{\ast}\) is an isomorphism. 
            Finally, by verifying the universal property of the kernel, we can see that the induced map \(\pi_0(P)_{\ast} \to \pi_0(\HZ)_{\ast}\) must be an isomorphism:
            \begin{equation*}
                \begin{tikzcd}[cramped]
                	0 && {\K^{MW}_{\ast}} && {\K^{MW}_\ast \oplus \K^{M}_{\ast}} && {\K^{M}_{\ast}} && 0 \\
                	\\
                	0 && {\pi_0(P)_{\ast}} && {\K^{MW}_\ast \oplus \K^{M}_{\ast}} && {\K^{M}_{\ast}} && 0
                	\arrow[from=1-1, to=1-3]
                	\arrow["{(id,\chi)}", from=1-3, to=1-5]
                	\arrow[from=1-3, to=3-3]
                	\arrow["{\chi-id}", from=1-5, to=1-7]
                	\arrow["\simeq"{description}, from=1-5, to=3-5]
                	\arrow[from=1-7, to=1-9]
                	\arrow["\simeq"{description}, from=1-7, to=3-7]
                	\arrow[from=3-1, to=3-3]
                	\arrow[from=3-3, to=3-5]
                	\arrow["{\chi-id}", from=3-5, to=3-7]
                	\arrow[from=3-7, to=3-9]
                \end{tikzcd}
            \end{equation*}
            Using the first part of Theorem \ref{thm:MSC-char-not-two}, \(\pi_0(\varphi)_{\ast}\) is an equivalence.
    \end{itemize}
\end{proof}
\begin{proof}[\textbf{Proof of Theorem \ref{thm:Morel-str-conj-2} in characteristic \(p \neq 2\).}]\label{proof:MSC-2-char-not-two}
    The proof goes exactly the same as the proof of \ref{thm:Morel-str-conj} in characteristic \(p \neq 2\).
\end{proof}

\newpage
\section{Characteristic 2}\label{sec:Char2}

\subsection{The proof}
It remains to consider the characteristic \(2\) case.
We organize the study of \eqref{eq:str-conj} into two cases:
\begin{itemize}
    \item The square \eqref{eq:str-conj} is cartesian after reducing modulo \(2\).
    \item The square \eqref{eq:str-conj} is cartesian after inverting \(2\).
\end{itemize}
Both steps will be carried out under the assumption that \emph{Witt K-theory is effective in characteristic \(2\)}, which will be then proven in section \ref{subsec:Effectivity-WittK}.

For now, we assume the following key result:
\begin{theorem}\label{assumption}
    Let \(k\) be a perfect field of characteristic \(2\).
    The motivic spectrum \(\rmH \K^{W} \in \SH(k)\), representing Witt K-theory, is effective. 
\end{theorem}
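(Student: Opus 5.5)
We will verify the Bachmann--Fasel criterion (Theorem \ref{thm:Bachmann-Fasel-effectivity}) for \(E=\rmH\K^{W}\). By Theorem \ref{thm:h-coniveau} applied to \(E\otimes\bfG_m^{\otimes n}\), the condition \(\lvert\omega^\infty(E)(n)(\hat\Delta^\bullet_F)\rvert\simeq\ast\) for \(n\ge 1\) is equivalent to the vanishing of \(\omega^\infty(s^0(\rmH\K^{W}\otimes\bfG_m^{\otimes n}))(F)\) for all finitely generated \(F/k\). Since \(\rmH\K^{W}\otimes\bfG_m^{\otimes n}\) has homotopy module \(\K^{W}_{\ast+n}\simeq\bfI^{\ast+n}\), the criterion reduces to the following statement: for each \(n\ge1\), the simplicial abelian group \(m\mapsto \bfI^{n}(\hat\Delta^m_F)\), which is discrete in each degree because \(\rmH\bfI\) lies in the heart, is contractible. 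Concretely, \(H_0=\mathrm{coker}(\partial_0-\partial_1\colon \bfI^n(\hat\Delta^1_F)\to \bfI^n(F))\) must vanish, and the higher homology must vanish as well.

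To show this, we will filter using the cofiber sequence \eqref{eq:short-exact-seq-Kato}, which gives short exact sequences of sheaves \(0\to\bfI^{n+1}\to\bfI^{n}\to\K^M_n/2\to0\). The key input is the Geisser--Levine phenomenon (Corollary \ref{cor:Geisser-Levine-motivic} and the subsequent corollary): \(\rmH\K^M/2\) is effective in characteristic \(2\). Hence \(s^0(\rmH\K^M/2\otimes\bfG_m^{\otimes n})\simeq0\) for \(n\ge1\), since a twist of an effective object by \(\bfG_m^{\otimes n}\) is \(n\)-effective, i.e. lies in \(\SHeff(k)(n)\), and so has vanishing \(s^0\). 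Equivalently, \(\lvert\K^M_n/2(\hat\Delta^\bullet_F)\rvert\simeq0\). Applying the exact functor \(s^0\) to the cofiber sequence then yields \(s^0(\rmH\bfI\otimes\bfG_m^{\otimes n+1})\simeq s^0(\rmH\bfI\otimes\bfG_m^{\otimes n})\) for all \(n\ge1\). So all these slices agree with \(s^0(\rmH\bfI\otimes\bfG_m^{\otimes N})\) for arbitrarily large \(N\).

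The main obstacle is to show that this stable value vanishes, which amounts to a convergence statement for the \(I\)-adic filtration along the semi-local simplices. The approach we would try first uses the fact that \(F\) has characteristic \(2\), so \(F\) is not formally real and \(I(L)\) is nilpotent on fields of bounded \(2\)-cohomological dimension. More sharply, in characteristic \(2\) Kato's work gives \(I^{N}(L)=0\) whenever \(N>[L:L^2]\)-rank, that is, \(N\) exceeding the \(p\)-rank \(\log_2[L:L^2]\). For \(L\) the function field of \(\hat\Delta^m_F\), this bound is \(\mathrm{trdeg}(F/k)+m\). We will use unramifiedness, \(\bfI^N(\hat\Delta^m_F)\hookrightarrow I^N(\mathrm{Frac})\), to deduce that \(\bfI^N(\hat\Delta^m_F)=0\) for \(N>\mathrm{trdeg}(F/k)+m\). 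This gives vanishing only degreewise in a range growing with \(m\), which is not uniform, so a finer argument is needed. Since \(\omega^\infty(s^0(-))(F)\) commutes with the relevant colimits and \(\rmH\bfI\otimes\bfG_m^{\otimes N}\simeq\Sigma\)-shifts of the heart, we would instead use connectivity: the geometric realization of a simplicial abelian group vanishing in degrees \(m< N-\mathrm{trdeg}(F/k)\) is \((N-\mathrm{trdeg}(F/k)-1)\)-connected. Letting \(N\to\infty\) with the slice independent of \(N\), it is \(\infty\)-connected, hence zero. This proves the criterion for all \(n\ge1\), so \(\rmH\K^{W}\) is effective.
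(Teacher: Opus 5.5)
Your outline follows the paper's proof step for step: the Bachmann--Fasel criterion, the homotopy coniveau identification, the twisted Kato sequence \eqref{eq:short-exact-seq-Kato} with effectivity of \(\rmH\K^M/2\) to make \(s^0(\rmH\K^W\otimes\bfG_m^{\otimes n})\) independent of \(n\ge1\), Milnor's bound transported along unramifiedness, and a connectivity estimate for the realization as \(N\to\infty\). Your threshold \(N>\mathrm{trdeg}(F/k)+m\) is the correct one; the paper's \(F(t_0,\dots,t_m)\) has one variable too many, which is harmless.

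There is one gap: you claim that \(m\mapsto\omega^\infty(\rmH\bfI\otimes\bfG_m^{\otimes n})(\hat\Delta^m_F)\) is degreewise discrete ``because \(\rmH\bfI\) lies in the heart,'' and that reason is wrong. Being in the heart only concentrates the homotopy \emph{sheaves} in degree \(0\). The value on a scheme \(X\) is \(R\Gamma_{Nis}(X,\bfI^n)\), with \(\pi_{-i}=H^i_{Nis}(X,\bfI^n)\), and such groups are nonzero in general (e.g.\ \(H^1_{Nis}(\bfP^1,\bfK^M_1)=\mathrm{Pic}(\bfP^1)\cong\bfZ\)).

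This step carries weight in your argument. Your unramifiedness argument only kills \(H^0\), and only for \(m<N-\mathrm{trdeg}(F/k)\). For \(m\ge N-\mathrm{trdeg}(F/k)\), a nonzero \(H^i_{Nis}(\hat\Delta^m_F,\bfI^N)\) with \(i>0\) can contribute in degree \(m-i\) of the \(m\)-th skeletal layer, which breaks the \((N-\mathrm{trdeg}(F/k))\)-connectivity estimate. Milnor's bound cannot rule this out, because in that range the Gersten terms \(I^{N-i}(\kappa(x))\) at codimension-\(i\) points need not vanish.

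What you need is \(H^i_{Nis}(\hat\Delta^m_F,\bfI^N)=0\) for \(i>0\), i.e.\ exactness of the Gersten/Rost--Schmid complex of a homotopy module on semi-local essentially smooth schemes. This is the content of the paper's Lemma~\ref{lemma:semilocality}, deduced from the semi-local results for \(\bfK^{MW}_\ast\) and \(\bfK^M_\ast\) via \(\bfI^{\ast+1}\to\bfK^{MW}_\ast\to\bfK^M_\ast\). With that input the terms really are discrete, and your Dold--Kan connectivity argument (equivalently the paper's Remark~\ref{rmk:connectiveness-simplicial-spectrum}) finishes the proof.
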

\noindent We defer the proof of this to section \ref{subsec:Effectivity-WittK}.
\begin{remark}\label{rmk:smaller-assumptions}
    The assumption ``\(\rmH \K^W \) is effective'' holds if and only if the assumptions holds mod \(2\) and after inverting \(2\), i.e. if \(\rmH \K^W /2\) and \(\rmH \K^W[1/2]\) are effective.
    Effectivity is preserved under taking colimits, so the ``only if'' direction is clear.
    For the converse, notice that \(E \in \SH(k)\) is \(0\) if and only it is \(0\) mod \(2\), and after inverting \(2\).
    Indeed, the contractibility of the \(E/2\) implies the multiplications \(E \overset{2 \cdot }{\longrightarrow} E\) are equivalences; therefore, 
    \begin{equation*}
        E[1/2] = E \otimes \bfS[1/2] \overset{\simeq}{\longleftarrow} \colim_{n \in \bfN} (E \overset{\cdot 2}{\to} E\overset{\cdot 2}{\to} E \overset{\cdot 2}{\to} \dots ).
    \end{equation*}
    is equivalent to \(E\), as the former is the colimit of a direct system consisting solely of equivalences. 
    Applying this reasoning to the cofiber of the counit \(f^0(E) \to E\), and remembering \(\SH(k)\) is stable, shows the counit must be an equivalence, proving \(E\) is effective. 
\end{remark}

\begin{remark}
        Notice that effectivity after inverting \(2\) implies modulo \(\ell\) and tensor \(\bfQ\)\footnote{For \(E \in \SH(k)\), the spectra \(E \otimes \, \bfQ\) can be computed as the filtered colimit in \(\SH(k)\): 
        \begin{equation*}
            \colim_{n \in \bfN} (E \overset{\cdot 2}{\to} E \overset{\cdot 3}{\to} E \overset{\cdot 4}{\to} \dots ).
        \end{equation*}} 
        Indeed, assuming \(E \otimes\bfS[1/2] \simeq 0\), then
    \begin{equation*}
        E /\ell \overset{\simeq}{\longleftarrow} E \otimes \left(\bfS / \ell\right) \overset{\simeq}{\longleftarrow} E \otimes \left(\bfS[1/2] \otimes_{\bfS[1/2]} \bfS / \ell\right) \overset{\simeq}{\longleftarrow} \left(E \otimes\bfS[1/2]\right)  \otimes_{\bfS[1/2]}\bfS / \ell,
    \end{equation*}
    and
    \begin{equation*}
        E \otimes \bfQ \overset{\simeq}{\longleftarrow} E \otimes \left(\bfS[1/2] \otimes_{\bfS[1/2]} \bfQ \right) \overset{\simeq}{\longleftarrow} \left(E \otimes\bfS[1/2]\right)  \otimes_{\bfS[1/2]} \bfQ.
    \end{equation*} 
\end{remark}

\begin{lemma}\label{lemma:Morel-str-conj-modtwo}
    Let \(k\) be a perfect field of characteristic \(2\). Then 
    \begin{equation*}
        \begin{tikzcd}[cramped]
    	   {(\HZtilde)/2} && {\rmH \K^{MW}/2} \\
    	   \\
    	   {(\HZ)/2} && {\rmH \K^M/2}
    	   \arrow[from=1-1, to=1-3]
    	   \arrow[from=1-1, to=3-1]
    	   \arrow[from=1-3, to=3-3]
        	\arrow[from=3-1, to=3-3]
        \end{tikzcd}
    \end{equation*}
    is a cartesian square in \(\SH(k)\).
\end{lemma}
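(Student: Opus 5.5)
The plan is to show that both horizontal arrows of the square are equivalences. A commutative square whose two horizontal maps are equivalences is automatically cartesian in a stable category, so this suffices.

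\textbf{The bottom arrow.} This is the Geisser--Levine input. Since \(k\) is perfect of characteristic \(2\), Corollary \ref{cor:Geisser-Levine-motivic} and the corollary after it (with \(p=2\), \(r=1\)) show that \(\rmH\K^M/2\) is effective. They also identify \(\HZ/2 \simeq f^0(\rmH\K^M)/2 \simeq f^0(\rmH\K^M/2) \to \rmH\K^M/2\) with the counit, which is therefore an equivalence. Here I use that \(f^0\) is exact, so it commutes with the cofiber of multiplication by \(2\). I expect this to match the bottom arrow of the square, because that arrow is \(f^0\) of the counit reduced mod \(2\).

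\textbf{The top arrow.} By the same exactness of \(f^0\), this arrow is the counit \(f^0(\rmH\K^{MW}/2) \to \rmH\K^{MW}/2\). So it suffices to prove that \(\rmH\K^{MW}/2\) is effective. I would get this from the cartesian square of Corollary \ref{cor:squareKMW-Algebras}. Reducing it mod \(2\), which is exact, exhibits \(\rmH\K^{MW}/2\) as the pullback of
\begin{equation*}
    \rmH\K^M/2 \longrightarrow (\rmH\K^M/2)/2 \longleftarrow \rmH\K^W/2 .
\end{equation*}
Each corner is effective:
\begin{itemize}
    \item \(\rmH\K^M/2\) is effective by the Geisser--Levine step above;
    \item \((\rmH\K^M/2)/2\) is a cofiber of an effective object, hence effective;
    \item \(\rmH\K^W/2\) is a cofiber of \(\rmH\K^W\), which is effective by Theorem \ref{assumption}.
\end{itemize}
Now \(\SHeff(k)\) is a stable subcategory of \(\SH(k)\) closed under colimits, so it is closed under fibers and finite limits. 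Hence the pullback \(\rmH\K^{MW}/2\) is effective, and the top counit is an equivalence.

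\textbf{Main obstacle.} The only delicate point is bookkeeping. One must check that the maps in the square, namely \(f^0\) applied to \(\mathfrak{q}_M\) and then reduced mod \(2\), agree with the counits mod \(2\) under the identifications \(f^0(E)/2 \simeq f^0(E/2)\). This follows from naturality of the counit \(f^0 \to \mathrm{id}\) together with exactness of \(f^0\). Given this, the square has equivalences as horizontal maps and is therefore cartesian.
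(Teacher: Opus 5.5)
Your proposal is correct and matches the paper's proof: it shows \(\rmH\K^{MW}/2\) is effective as a pullback of effective objects in the mod-\(2\) reduction of the square \eqref{eq:right-square}, using Theorem \ref{assumption} and Geisser--Levine, so the top arrow is an equivalence, and Geisser--Levine alone makes the bottom arrow an equivalence. The only minor difference is that you get effectivity of \((\rmH\K^M/2)/2\) directly as a cofiber of effective objects. The paper instead first splits it as \(\rmH\K^M/2\oplus\rmH\K^M/2[1]\), which is valid because \(\rmH\K^M/2\) lies in the heart, but your version does not need that.
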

\begin{proof}
    Consider the square \eqref{eq:right-square} mod \(2\):
    \begin{equation*}
        \begin{tikzcd}
        	{\rmH\K^{MW}/2} &&& {\rmH\K^{W}/2} \\
        	\\
        	{\rmH\K^{M}/2} &&& {(\rmH\K^{M}/2)/2 \simeq \rmH\K^{M}/2 \oplus \rmH\K^{M}/2[1]}
        	\arrow[from=1-1, to=1-4]
        	\arrow[from=1-1, to=3-1]
        	\arrow[from=1-4, to=3-4]
        	\arrow[from=3-1, to=3-4]
        \end{tikzcd}
    \end{equation*}
    This square is still cartesian, as taking cofibers is exact in a stable category. The spectrum \(\rmH\K^{M}/2\) is effective; hence, its suspension \(\rmH\K^{M}/2 [1]\) and the coproduct \(\rmH\K^{M}/2 \; \oplus \; \rmH\K^{M}/2[1]\) are also effective.
    Theorem \ref{assumption} gives the effectivity \(\rmH \K^{W}/2\); we obtain that \(\rmH \K^{MW}/2\) must be effective as well, since it is the pullback of effective motivic spectra. Since \(\rmH \K^{MW}/2\) is effective, it is equivalent to its effective cover 
    \begin{equation*}
        \HZtilde/2 = f^0(\rmH \K^{MW}/2) \overset{\simeq}{\longrightarrow} \rmH \K^{MW}/2.
    \end{equation*}
    Consider now the pullback \(P_2\) of the span 
    \begin{equation*}
        \rmH\K^{MW}/2 \rightarrow \rmH\K^{M}/2 \leftarrow \HZ/2.
    \end{equation*}
    By definition of \(\HZtilde/2\) there exists a canonical map \(\varphi_{2}: \HZtilde/2 \to P_{2}\).
    Since the map \(\HZ/2 \to \rmH\K^{M}/2\) is an equivalence by \cite[Thm.8.4]{Geisser_Levine} (see Corollary \ref{thm:Geisser-Levine}), the map \(P_2 \to \rmH\K^{MW}/2\) also must be an equivalence, and so must be \(\varphi_2\). 
\end{proof}

\begin{lemma}\label{lemma:Morel-str-conj-modl}
    Let \(k\) be a perfect field of characteristic \(2\). The Morel structure conjecture holds in \(\SH(k)\) after inverting \(2\), i.e.
    \begin{equation}\label{eq:square-inv-two}
        \begin{tikzcd}[cramped]
	   {(\HZtilde)[1/2]} && {\rmH \K^{MW} [1/2]} \\
	   \\
	   {(\HZ)[1/2]} && {\rmH \K^M [1/2]}
	   \arrow[from=1-1, to=1-3]
	   \arrow[from=1-1, to=3-1]
	   \arrow[from=1-3, to=3-3]
    	\arrow[from=3-1, to=3-3]
    \end{tikzcd}
    \end{equation}
    is a cartesian square in \(\SH(k)\).
\end{lemma}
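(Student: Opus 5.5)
The plan is to deduce the square \eqref{eq:square-inv-two} from the cartesian square of Corollary \ref{cor:squareKMW-Algebras} after inverting \(2\), using Theorem \ref{assumption} for the effectivity input. Inverting \(2\) is the filtered colimit \(E[1/2] \simeq \colim(E \overset{2}{\to} E \overset{2}{\to} \cdots)\), so it is exact and preserves cartesian squares. It also commutes with \(f^0\), because \(f^0 = i^0 r^0\) preserves colimits. Since \(\rmH\K^{M}/2\) is killed by inverting \(2\), the cartesian square \eqref{eq:right-square} becomes
\begin{equation*}
    \rmH\K^{MW}[1/2] \simeq \rmH\K^{M}[1/2] \oplus \rmH\K^{W}[1/2],
\end{equation*}
with the first projection being \(\mathfrak q_M[1/2]\).

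Next, Theorem \ref{assumption} says that \(\rmH\K^W\) is effective, so \(\rmH\K^W[1/2]\) is effective as well, effective spectra being closed under colimits. We then apply \(f^0\) to the splitting above. Because \(f^0\) is exact and commutes with \([1/2]\), this gives
\begin{equation*}
    \HZtilde[1/2] \simeq \HZ[1/2] \oplus \rmH\K^{W}[1/2].
\end{equation*}
The left vertical map of \eqref{eq:square-inv-two} is the projection onto the first summand, and the top horizontal map is the counit map \(\HZ[1/2]\oplus\rmH\K^W[1/2] \to \rmH\K^M[1/2]\oplus\rmH\K^W[1/2]\). This counit is the direct sum of the counit for \(\rmH\K^{M}[1/2]\) and the identity on \(\rmH\K^W[1/2]\), the latter because \(\rmH\K^W[1/2]\) is effective.

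The square \eqref{eq:square-inv-two} is therefore the direct sum of two squares. The first is the square with rows \(\HZ[1/2] \overset{=}{\to} \HZ[1/2]\) and \(\HZ[1/2] \to \rmH\K^M[1/2]\). The second has \(\rmH\K^W[1/2] \overset{=}{\to}\rmH\K^W[1/2]\) on top and \(0 \to 0\) on the bottom. Each of these is cartesian, since its horizontal maps have equivalent fibers (in each, two parallel horizontal maps are equivalences or both are identical maps). A direct sum of cartesian squares is cartesian, so \eqref{eq:square-inv-two} is cartesian. Equivalently, one can check that the induced map on horizontal fibers is an equivalence, and it is the identity on \(\mathrm{fib}(\HZ[1/2]\to\rmH\K^M[1/2])\).

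The main obstacle is the naturality bookkeeping. Concretely, one must verify that the splitting of \(\rmH\K^{MW}[1/2]\) is compatible with the maps \(\mathfrak q_M\) and with the counit \(f^0\to \mathrm{id}\), so that the identification of the top row really is the counit. I would handle this by never choosing a splitting and working only with fibers. Cartesianness of \eqref{eq:right-square} gives that \(\mathfrak q_M[1/2]\) has fiber \(\mathrm{fib}(\rmH\K^W[1/2]\to 0)=\rmH\K^W[1/2]\). Since \(f^0\) is exact, the fiber of \(\HZtilde[1/2]\to\HZ[1/2]\) is \(f^0(\rmH\K^W[1/2])\), and this maps to \(\rmH\K^W[1/2]\) by the counit, which is an equivalence by effectivity. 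Equivalence of the vertical fibers, compatibly with the counit by naturality, is exactly cartesianness of \eqref{eq:square-inv-two}.
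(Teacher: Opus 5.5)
Your proposal is correct and follows essentially the paper's argument. You invert $2$ in the cartesian square \eqref{eq:right-square}, use $(\rmH\K^M/2)[1/2]\simeq 0$ to split $\rmH\K^{MW}[1/2]\simeq\rmH\K^M[1/2]\oplus\rmH\K^W[1/2]$, commute $f^0$ with $[1/2]$, and use the effectivity of $\rmH\K^W$ (Theorem \ref{assumption}) to identify the comparison map as the counit on $\rmH\K^W[1/2]$ plus the identity on $\HZ[1/2]$. Your fiber formulation in the last paragraph justifies that identification more cleanly than the paper, which simply asserts it. It also shows that inverting $2$ is not needed: the same reasoning applies to $\mathrm{fib}(\mathfrak q_M)\simeq\mathrm{fib}(\rmH\K^W\to\rmH\K^M/2)\simeq\rmH\K^W\otimes\bfG_m$, which is effective by Theorem \ref{assumption}.
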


\begin{proof} 
    Consider the square \eqref{eq:right-square} tensored with \(\bfS[1/2]\).
    \begin{equation*}
        \begin{tikzcd}
        	{\rmH\K^{MW}[1/2]} &&& {\rmH\K^{W}[1/2]} \\
        	\\
        	{\rmH\K^{M}[1/2]} &&& {(\rmH\K^{M}/2)[1/2]}
        	\arrow[from=1-1, to=1-4]
        	\arrow[from=1-1, to=3-1]
        	\arrow[from=1-4, to=3-4]
        	\arrow[from=3-1, to=3-4]
        \end{tikzcd}
    \end{equation*}
    This square is still cartesian, as taking cofibers is exact in a stable category. Notice that \((\rmH\K^{M}/2) [1/2]\simeq 0\), so the square shows Milnor-Witt K-theory mod \(\ell\) as the direct sum 
    \begin{equation*}
        \rmH\bfK^{MW}[1/2]
        \overset{\simeq}{\longrightarrow} 
        \left(\rmH \bfK^{W} [1/2]\right) \oplus \left(\rmH \bfK^{M} [1/2] \right) 
        \overset{\simeq}{\longleftarrow}
        \left(\rmH \bfK^{W} \oplus \rmH \bfK^{M}\right) [1/2] 
    \end{equation*}
    We can now easily compute the pullback \(P_{1/2}\) of the span
    \begin{equation*}
        \rmH\K^{MW} \otimes \; \bfS[1/2]  \rightarrow \rmH\K^{M} \otimes \; \bfS[1/2] \leftarrow \HZ \otimes \; \bfS[1/2];
    \end{equation*}
    which is given by
    \begin{align*}
        &\rmH\K^{MW} [1/2] \oplus_{\rmH\K^{M} [1/2]} \HZ [1/2]
        \overset{\simeq}{\longrightarrow} \\
        &\overset{\simeq}{\longrightarrow}
        \left(\rmH\K^{M} [1/2] \oplus \rmH \K^{W} [1/2] \right) \oplus_{\rmH\K^{M} [1/2]} \HZ [1/2]   \overset{\simeq}{\longrightarrow} \\
        &\overset{\simeq}{\longrightarrow}
        \left(\rmH \K^{W} [1/2] \right) \oplus \left(\HZ [1/2]\right)
        \overset{\simeq}{\longleftarrow}
        \\
        &\overset{\simeq}{\longleftarrow}
        \left(\rmH \K^{W} \oplus \HZ \right) [1/2]
    \end{align*}
    By definition of \(\HZtilde[1/2]\) there exists a canonical map \(\varphi_{1/2}: \HZtilde[1/2] \to P_{1/2}\).
    Using the fact that the effective cover functor \(f^0\) is symmetric (and so it commutes with tensoring with \(\bfS[1/2]\)) and exact, we can also compute
    \begin{align*}
        \HZtilde[1/2] & = f^0(\rmH\K^{MW}) [1/2] \overset{\simeq}{\longrightarrow} f^0(\rmH\K^{MW}[1/2])\\
        &\overset{\simeq}{\longrightarrow} f^0(\rmH \K^{W}[1/2] \oplus \rmH\K^{M}[1/2] ) \overset{\simeq}{\longleftarrow}  f^0 ( \rmH \K^{W}[1/2] ) \oplus f^0( \rmH\K^{M}[1/2])\\
        &\overset{\simeq}{\longleftarrow} 
        f^0 ( \rmH \K^{W})[1/2] \oplus f^0( \rmH\K^{M})[1/2] \overset{\simeq}{\longleftarrow}  f^0 ( \rmH \K^{W})[1/2] \oplus \HZ [1/2].
    \end{align*}
    The map \(\varphi_{1/2}: \HZtilde[1/2] \to P_{1/2}\) is equivalent to the map \(f^0 ( \rmH \K^{W})[1/2] \oplus \HZ[1/2] \to \rmH \K^{W}[1/2] \oplus \HZ[1/2]\), which is the product of the counit \(f^0 \to id\) tensored with \(\bfS[1/2]\) and the identity map on \(\HZ[1/2]\).
    By Theorem \ref{assumption}, this counit map is an equivalence, so \(\varphi_{1/2}\) is also an equivalence and the square \eqref{eq:square-inv-two} is cartesian. 
\end{proof}

Now that we have proved the theorem modulo \(2\) and after inverting \(2\), we can finally prove Theorem \ref{thm:Morel-str-conj} (in characteristic \(2\)).
\begin{proof}[Proof of Theorem \ref{thm:Morel-str-conj} in characteristic \(2\).]
    Let \(k\) be a perfect field of characteristic \(2\).
    Consider the pullback \(P\) of the span 
    \begin{equation*}
        \HZ \rightarrow \rmH \K^M \leftarrow \rmH \K^{M};
    \end{equation*}
    by definition of \(\HZtilde\) there exists a canonical map \(\varphi: \HZtilde \to P\).
    \begin{equation*}
        \begin{tikzcd}[cramped,sep=scriptsize]
        	\HZtilde &&& P &&& {\rmH\K^{MW}} \\
        	\\
        	\\
        	&&& {\HZ} &&& {\rmH\K^{M}}
        	\arrow["{\varphi}"', dashed, from=1-1, to=1-4]
        	\arrow[curve={height=-18pt}, from=1-1, to=1-7]
        	\arrow[from=1-1, to=4-4]
        	\arrow[from=1-4, to=1-7]
        	\arrow[from=1-4, to=4-4]
        	\arrow[from=1-7, to=4-7]
        	\arrow[from=4-4, to=4-7]
        \end{tikzcd}
    \end{equation*}
    Since \(\varphi/2 \simeq \varphi_2\) and \(\varphi[1/2] \simeq \varphi_{1/2}\), and we have shown that these are equivalences, \(\varphi\) must be an equivalence as well by Remark \ref{rmk:smaller-assumptions}. 
    Therefore, square \eqref{eq:str-conj} is cartesian.
\end{proof}

In characteristic \(2\), Theorem \ref{thm:Morel-str-conj-2} follows immediately from Theorem \ref{assumption}.
\begin{proof}[Proof of Theorem \ref{thm:Morel-str-conj-2} in characteristic \(2\).]
      Let \(k\) be a perfect field of characteristic \(2\). 
      The maps \(\HZ/2 \to \rmH\bfK^{M}/2\) and \(\HWZ \to \rmH \bfK^{W}\) are both equivalence, the first by Theorem \ref{assumption}, the second by the Geisser-Levine Theorem (see Corollary \ref{cor:Geisser-Levine-motivic}).
      The square \eqref{eq:str-conj-2} is commutative, so it must be cartesian.
\end{proof}
\begin{remark}
    Theorem \ref{thm:Morel-str-conj-2} in characteristic \(2\) is equivalent to Theorem \ref{assumption}. 
    We have shown one direction.
    For the other direction, assume Theorem \ref{thm:Morel-str-conj-2} holds.
    By the Geisser-Levine Theorem, the map \(\HZ/2 \to \rmH\bfK^{M}/2\) is an equivalence. 
    Since the square is cartesian \(\HWZ \to \rmH \bfK^{W}\) is also an equivalence.
    So \(\bfK^{W}\) is effective, being equivalent to its own effective cover. 
\end{remark}

\subsection{Effectivity of Witt K-theory in char. 2}\label{subsec:Effectivity-WittK}
It remains for us to prove the Theorem \ref{assumption}.
To do it, we want to use the Bachmann-Fasel effectivity criterion \ref{thm:Bachmann-Fasel-effectivity}.

Fixed \(k\) perfect field of characteristic \(2\), to use the Bachmann-Fasel effectivity criterion \ref{thm:Bachmann-Fasel-effectivity} we need to show that 
\begin{equation*}
    \left\lvert \omega^{\infty}(\rmH\bfK^W \otimes \; \bfG_m^{\otimes d})( \hat{\Delta}^\bullet_F ) \right\rvert \simeq \ast,
\end{equation*}
for all \(d \geq 1\) and all finitely generated fields \(F/k\).
\begin{lemma}\label{lemma:hconiveau-HKW}
    Let \(k\) be a perfect field of characteristic \(2\). There are equivalences
    \begin{equation}\label{eq:fundamental-equiv}
        \omega^{\infty}(s_0 (\rmH \K^{W} \otimes \; \bfG_{m}^{\otimes d} ))(F) \overset{\simeq}{\longrightarrow} 
        \left\lvert \omega^{\infty}(\rmH \K^{W} \otimes \; \bfG_{m}^{\otimes d})(\hat{\Delta}^{\bullet}_F)\right\lvert 
        =
        \left\lvert R\Gamma_{Nis}(-, I^d)(\hat{\Delta}^{\bullet}_F)\right\rvert\
    \end{equation}
    for each field \(F\) finitely generated over \(k\), for each \(n \geqslant 0\).
\end{lemma}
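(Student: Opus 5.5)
The statement has two parts, and I would prove them separately. The first is the left-hand equivalence. It is a direct instance of the homotopy coniveau description, Theorem \ref{thm:h-coniveau}, applied to \(E = \rmH \K^{W} \otimes \bfG_m^{\otimes d}\). Nothing specific to characteristic \(2\) enters here; one only needs \(k\) perfect and \(F/k\) finitely generated, so that \(\Spec(F)\) and \(\hat{\Delta}^n_F\) are essentially smooth. Then the left Kan extension of \(\omega^\infty(E)\) to \(\essSmk\) makes sense, and the geometric realization is the one appearing in Theorem \ref{thm:h-coniveau}.

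The second part is the identification
\[
\omega^{\infty}(\rmH \K^{W} \otimes \bfG_m^{\otimes d}) \simeq R\Gamma_{Nis}(-, \bfI^d)
\]
as sheaves of spectra on \(\Smk\). I would first use that \(\rmH \K^{W}\) lies in the heart, so \(\rmH\K^{W} \otimes \bfG_m^{\otimes d}\) is again in the heart, with homotopy module \(\bfK^{W}_{\ast+d}\). For any \(M\) in the heart, \(\omega^\infty(\rmH M)\) is the Nisnevich sheaf of spectra whose only nonvanishing homotopy sheaf is \(M_0\) in degree \(0\). This follows from the description of the heart recalled in Section \ref{subsec:SH-and-HI} together with Morel's connectivity (\(\pi_i(E)_0 = 0\) for \(i \neq 0\) when \(E\) is in the heart). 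A Nisnevich sheaf of spectra with a single homotopy sheaf \(A\) in degree \(0\) is the Eilenberg--MacLane sheaf \(R\Gamma_{Nis}(-, A)\). Hence \(\omega^\infty(\rmH\K^{W} \otimes \bfG_m^{\otimes d}) \simeq R\Gamma_{Nis}(-,\bfK^W_d)\), and \(\bfK^W_d \simeq \bfI^d\) by the corollary to Theorem \ref{thm:description-of-KW}.

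Finally I would pass to the semi-local schemes. Both sides are left Kan extended along the cofiltered limits defining \(\hat{\Delta}^n_F\), so the identification extends, naturally in the simplicial variable. The resulting sheaf of spectra is Nisnevich and \(\bfA^1\)-invariant, since \(\bfI^d\) is strictly \(\bfA^1\)-invariant; this matches the target category used in the extension to \(\essSmk\).

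The main obstacle is checking naturality: the composite identification must be compatible with the simplicial structure maps of \(\hat{\Delta}^\bullet_F\), so that it induces an equivalence on geometric realizations rather than only levelwise. I expect this to be formal because every step is a natural equivalence of functors on \(\essSmk\). I would also note that the index \(n \geq 0\) in the statement plays no role beyond the simplicial degree.
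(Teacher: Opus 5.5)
Your proposal is correct and follows the paper's proof. The left equivalence is Theorem \ref{thm:h-coniveau} applied to \(\rmH\K^{W} \otimes \bfG_m^{\otimes d}\), and the right-hand equality is the step the paper dismisses as holding ``by definition of the functor \(\rmH\)''. You spell that step out: the twisted object lies in the heart, so \(\omega^\infty\) of it is the Eilenberg--MacLane sheaf \(R\Gamma_{Nis}(-,\bfK^W_d)\), then \(\bfK^W_d \cong \bfI^d\) by the Morel--Carlier isomorphism, and the identification extends to \(\hat{\Delta}^\bullet_F\) by left Kan extension.
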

\begin{proof}
    By the homotopy coniveau tower, see Theorem \ref{thm:h-coniveau}, we have an equivalence
    \begin{equation*}
        \omega^{\infty}(s_0 (\rmH \K^{W} \otimes \; \bfG_{m}^{\otimes d}))(F) \to \left\lvert \omega^{\infty}(\rmH \K^{W} \otimes \; \bfG_{m}^{\otimes d})(\hat{\Delta}^{\bullet}_F)\right\lvert 
    \end{equation*}
    The equality is by definition of the functor \(H: \bfH \bfI(k) \to \SH(k)\). 
\end{proof}
\begin{lemma}\label{lemma:s_0-KW}
    Let \(k\) be a perfect field of characteristic \(2\). For all \(d \geq 1\), the canonical map \(H\bfK^W \otimes \, \bfG_m \overset{\eta \cdot}{\longrightarrow} H\bfK^W\) induces an equivalence on \(0\)-slices 
    \begin{equation}
        s_0 (\rmH\bfK^W \otimes \, \bfG_m^{\otimes d+1}) \overset{\simeq}{\longrightarrow} s_0 (\rmH\bfK^W \otimes \, \bfG_m^{\otimes d}).
    \end{equation}
\end{lemma}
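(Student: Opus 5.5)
The map in question is \(\eta\cdot(-)\otimes \bfG_m^{\otimes d}\colon \rmH\bfK^W\otimes\bfG_m^{\otimes d+1}\to \rmH\bfK^W\otimes\bfG_m^{\otimes d}\). I would study its cofiber and show that \(s_0\) of that cofiber vanishes; exactness of \(s_0\) then gives the claimed equivalence. The first step is to identify the cofiber using the cofiber sequence \eqref{eq:short-exact-seq-Kato}, namely \(\rmH\bfK^W\otimes\bfG_m\to\rmH\bfK^W\to\rmH\bfK^M/2\). Under Theorem \ref{thm:description-of-KW} the first map is \(I^{\ast+1}\hookrightarrow I^\ast\). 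On a field \(F\), the component of \(\eta\cdot\colon K^W_{\ast+1}(F)\to K^W_{\ast}(F)\) in degree \(\ast\) corresponds, up to the sign \(\eta\mapsto -1\), to the inclusion \(I^{\ast+1}(F)\subset I^{\ast}(F)\). The required compatibility of contraction isomorphisms is handled by Remark \ref{rmk:fields}: it is enough to compare the two maps of homotopy modules on finitely generated fields, where both are multiplication by \(\eta\). Tensoring with \(\bfG_m^{\otimes d}\), the cofiber of our map is therefore \(\rmH\bfK^M/2\otimes\bfG_m^{\otimes d}\), up to sign.

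It then remains to show \(s_0(\rmH\bfK^M/2\otimes\bfG_m^{\otimes d})\simeq 0\) for \(d\ge 1\). By Corollary \ref{cor:Geisser-Levine-motivic} and the corollary after it, \(\rmH\bfK^M/2\simeq \HZ/2\) in characteristic \(2\), and \(\HZ/2\) is effective. Hence \(\rmH\bfK^M/2\otimes\bfG_m^{\otimes d}\simeq \HZ/2\otimes\bfG_m^{\otimes d}\in\SHeff(k)(d)\), which is \(d\)-effective and in particular \(1\)-effective when \(d\ge1\). For a \(1\)-effective \(E\) we have \(f^1E\simeq f^0E\simeq E\), so \(s_0E\simeq 0\). (Here I read \(\bfG_m\) as the \(\bfT\)-twist up to a shift, so that \(\HZ/2\otimes\bfG_m^{\otimes d}\simeq \HZ/2\otimes\bfT^{\otimes d}[-d]\). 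The category \(\SHeff(k)(d)\) is stable, so the shift does not matter.) Applying the exact functor \(s_0\) to the cofiber sequence
\[\rmH\bfK^W\otimes\bfG_m^{\otimes d+1}\to\rmH\bfK^W\otimes\bfG_m^{\otimes d}\to\rmH\bfK^M/2\otimes\bfG_m^{\otimes d}\]
then gives the lemma.

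The main obstacle is the first step: identifying the first map in \eqref{eq:short-exact-seq-Kato} with \(\eta\cdot\) as maps in \(\SH(k)\), rather than merely abstractly. Because both objects lie in the heart, a map between them is determined by the induced map of homotopy modules. By Remark \ref{rmk:fields}, equality of two such maps can be checked on fields, where it reduces to the isomorphism \(K^W_\ast(F)\cong I^\ast(F)\) sending \(\eta\mapsto -1\). The sign is harmless because it does not change the cofiber up to equivalence. A secondary point is that the lemma only needs the Geisser--Levine input for \(\rmH\bfK^M/2\), and does not use Theorem \ref{assumption}; this matters because the lemma is meant to be used in proving that theorem.
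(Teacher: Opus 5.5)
Your proposal is correct and follows the paper's proof. You tensor the cofiber sequence \eqref{eq:short-exact-seq-Kato} with \(\bfG_m^{\otimes d}\), apply the exact functor \(s_0\), and kill the third term using the effectivity of \(\rmH\bfK^M/2\) in characteristic \(2\) (Geisser--Levine), which makes \(\rmH\bfK^M/2\otimes\bfG_m^{\otimes d}\) \(d\)-effective. Your explicit identification of the first map of \eqref{eq:short-exact-seq-Kato} with multiplication by \(\eta\) (up to sign, checked on fields) fills in a point the paper leaves implicit.
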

\begin{proof}
    Since tensor products are exact in a stable category, the sequence \eqref{eq:short-exact-seq-Kato} remains a cofiber sequence after tensoring with \(\bfG_m^{\otimes d}\).
    Taking \(0\)-slices is exact, so we get a cofiber sequence 
        \begin{equation*}
            s_0 (\rmH\bfK^W \otimes \, \bfG_m^{\otimes d+1} ) \longrightarrow s_0 (\rmH\bfK^W \otimes \, \bfG_m^{\otimes d}) \longrightarrow s_0 (\rmH\bfK^M/2 \otimes \, \bfG_m^{\otimes d}).
        \end{equation*}
    Since \(\rmH\bfK^M/2\) is effective, then, for all \(d \geq 1\),
        \begin{equation*}
            0 \overset{\simeq}{\longrightarrow} s_{-d} (\rmH\bfK^M/2) \otimes \, \bfG_m^{\otimes d} \overset{\simeq}{\longrightarrow} s_0 (\rmH\bfK^M/2 \otimes \, \bfG_m^{\otimes d}).
        \end{equation*}
    The third object of the first sequence is \(0\), since it is the same as the third object of the first one; consequently, the first map of the first sequence is an equivalence.
\end{proof}
\noindent Using Lemma \ref{lemma:hconiveau-HKW} and Lemma \ref{lemma:s_0-KW}, we obtain equivalences of spectra
\begin{equation*}
    \calX\coloneq\left\lvert R\Gamma_{Nis}(-, I)(\hat{\Delta}^\bullet_F) 
    \right\rvert \overset{\simeq}{\longleftarrow} 
    \left\lvert R\Gamma_{Nis}(-, I^2)(\hat{\Delta}^\bullet_F)
    \right\rvert \overset{\simeq}{\longleftarrow}
    \cdots
\end{equation*}
We want to show that \(\calX\) is contractible.
First, we need some information about \(H^i_{Nis}(\hat{\Delta}^m_F, I^d)\).

\begin{remark}[ \protect{\cite[Example 12.2]{Voevodsky_LectureNotes}}]
    Let \(F\) be a field. Then \(F\) has Nisnevich cohomological dimension zero. In particular, if \(F\) is a finitely generated field over a perfect field \(k\), and we consider the homotopy module \(\bfI^* \in \Shv_{Nis}(\Smk, \mathrm{Ab})^{\bfZ}\), then \(H^i_{Nis}(F, I^d)=0\) for all \(i>0\). More generally, this holds for all henselian local rings over \(k\). 
\end{remark}

\begin{lemma}[(Injectivity for semilocal rings)] \label{lemma:semilocality}
    Let \(k\) be a perfect field. For each field \(F\)  finitely generated over \(k\) and for each \(m \geq 0\), then
    \begin{equation*}
        I^d (\hat{\Delta}^m_F)= H^0_{Nis}(-, I^d)(\hat{\Delta}^m_F) \hookrightarrow H^0_{Nis}(F, I^d) = I^d (F(t_0, t_1,\dots, t_m))
    \end{equation*}
    and, for all \(i>0\)
    \begin{equation*}
        H^i_{Nis}(\hat{\Delta}^m_F, I^d) \hookrightarrow H^i_{Nis}(F(t_0, t_1,\dots, t_m), I^d)=0.
    \end{equation*}
\end{lemma}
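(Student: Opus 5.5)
The injectivity of \(H^0\) and the vanishing of higher cohomology both come from the Gersten (Rost--Schmid) resolution of the strictly \(\bfA^1\)-invariant sheaf \(\bfI^d\), applied to the semi-local scheme \(\hat{\Delta}^m_F\). Note that \(\hat{\Delta}^m_F\) is an essentially smooth, integral, semi-local \(k\)-scheme whose function field is \(F(\Delta^m)\cong F(t_1,\dots,t_m)\); the extra variable \(t_0\) in the statement is redundant because of the relation \(\sum x_i=1\). Since \(\bfI^\ast\simeq \bfK^W_\ast\) is a homotopy module, Morel's theory (\cite[Ch.~5]{Morel_A1top}) supplies, for every \(X\in\Smk\), a flasque resolution
\begin{equation*}
0\to \bfI^d \to \bigoplus_{x\in X^{(0)}} (i_x)_\ast \bfI^d(\kappa(x)) \to \bigoplus_{x\in X^{(1)}} (i_x)_\ast \bfI^{d-1}(\kappa(x);\omega_x)\to\cdots .
\end{equation*}
This resolution computes \(H^i_{Nis}(X,\bfI^d)\) as the cohomology of the Rost--Schmid complex. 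By the continuity of the left Kan extension to \(\essSmk\), the same holds after passing to the colimit over open neighbourhoods \(U\) of the vertices, so \(H^\ast_{Nis}(\hat{\Delta}^m_F,\bfI^d)\) is the cohomology of the colimit of the Rost--Schmid complexes of such \(U\).

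The \(H^0\) statement should then follow directly from unramifiedness. By \eqref{eq:unramifiedness-def-schemes}, the group \(\bfI^d(U)\) is a subgroup of \(\bfI^d(k(U))\) cut out by residue conditions at codimension-one points. Filtered colimits of injections are injective, so \(\bfI^d(\hat{\Delta}^m_F)\hookrightarrow \bfI^d(F(t_1,\dots,t_m))=H^0_{Nis}(F(t_1,\dots,t_m),I^d)\).

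For \(i>0\), the right-hand side vanishes by the Remark above, since a field has Nisnevich cohomological dimension zero. It therefore suffices to show \(H^i_{Nis}(\hat{\Delta}^m_F,\bfI^d)=0\) for \(i>0\), and I will prove exactly this. The key input is Gersten exactness for semi-local essentially smooth schemes: for a strictly \(\bfA^1\)-invariant sheaf \(M\) and a semi-local essentially smooth \(S\) over a field, the augmented Rost--Schmid complex \(0\to M(S)\to C^0(S)\to C^1(S)\to\cdots\) is exact. For local schemes this is Morel's theorem, proved via Gabber's presentation lemma. For finitely many points one uses the semi-local version of Gabber's lemma, which holds over infinite fields; over finite fields there is a standard transfer/pro-\(\ell\) trick, or one can invoke the general Colliot-Thélène--Hoobler--Kahn framework, which covers semi-local rings. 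Granting this exactness, the cohomology of the Rost--Schmid complex of \(\hat{\Delta}^m_F\) is concentrated in degree \(0\), so the higher Nisnevich cohomology vanishes and the claimed injections into \(0\) hold trivially.

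The main obstacle is the semi-local Gersten exactness, especially when \(k\) (and hence possibly \(F\)) is finite. If no convenient reference covers homotopy modules over finite fields, my first attempt will be a norm argument. Pass to two extensions \(F'/F\) of coprime degrees, for example of degrees \(3\) and \(5\) (odd degrees, to avoid residue issues in characteristic \(2\)), over which the field is infinite after taking the colimit; there the semi-local Gabber lemma applies. I would then descend using that \(\bfI^\ast=\bfK^W_\ast\) has transfers whose composition with restriction is multiplication by the degree. Since \(W\)-modules need not be uniquely divisible, I would exploit the fact that for an odd degree extension the composite \(\mathrm{tr}\circ\mathrm{res}\) acts by a unit of \(W(F)\) (Springer's theorem). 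This yields injectivity of restriction on cohomology, and hence vanishing over \(F\).
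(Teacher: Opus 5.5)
Your proposal is correct in outline, and your reading of the function field of $\hat{\Delta}^m_F$ as $F(t_1,\dots,t_m)$ is the right one. The route differs from the paper's, however.

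The paper gets both claims at once from citations. Its first route is an Elmanto--Morrow lemma giving injectivity of $\pi_n\mathcal{F}(R)\to\pi_n\mathcal{F}(\mathrm{Frac}\,R)$ for finitary, deflatable Nisnevich sheaves and regular \emph{local} $k$-algebras $R$. It applies this to $R\Gamma_{Nis}(-,I^d)$ and passes to essentially smooth schemes by filtered colimits. Its alternative route uses the sequence $0\to\mathbf{I}^{d+1}\to\mathbf{K}^{MW}_d\to\mathbf{K}^M_d\to 0$, together with semi-local injectivity for $\mathbf{K}^{MW}$ (Bachmann--Fasel) and for $\mathbf{K}^M$ (Colliot-Th\'el\`ene--Hoobler--Kahn).

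You instead get $H^0$ elementarily from unramifiedness. You get higher cohomology from semi-local Gersten exactness applied directly to the homotopy module $\mathbf{I}^d$, handling semi-locality with the semi-local Gabber lemma and finite base fields with a transfer argument.

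The paper's route is shorter, given its references. Yours has two advantages.
\begin{itemize}
\item It confronts semi-locality explicitly. The first citation, as quoted, concerns local rings, whereas $\hat{\Delta}^m_F$ is only semi-local.
\item It avoids the diagram chase of the second route. That chase formally yields injectivity on $H^0$ and $H^i=0$ for $i\geq 2$. For $i=1$, however, it only identifies $H^1(X,\mathbf{I}^{d+1})$ with $\mathrm{coker}(\mathbf{K}^{MW}_d(X)\to\mathbf{K}^M_d(X))$, and the vanishing of that cokernel does not follow from the injectivity statements alone.
\end{itemize}

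Two details of your finite-field step should be corrected.
\begin{itemize}
\item For a finite extension $F'/F$, the composite $\mathrm{tr}\circ\mathrm{res}$ on $\mathbf{I}^d$-cohomology is multiplication by a class in $\mathrm{GW}(F)$, not by the degree. For the motivic transfer this class is the trace form $\mathrm{Tr}_{F'/F}\langle 1\rangle$.
\item This composite is invertible, but not because of Springer's theorem. The form has odd rank, $h$ acts by zero on $\mathbf{K}^W_\ast$, and $W(F)$ is local with maximal ideal $I(F)$, since $F$ has positive characteristic and is therefore not formally real.
\end{itemize}
Consequently a single pro-$3$ tower suffices. By continuity, a class vanishing over the union already vanishes at some finite stage of odd degree, and injectivity of restriction then kills it. The coprime pair of degrees is unnecessary.
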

\begin{proof}
    Let \(k\) be a perfect field and \(\calF: \mathrm{Reg}_k^{op} \to \mathrm{Sp}\) be a finitary, Nisnevich sheaf on regular Noetherian \(k\)-schemes such that whose restriction to smooth \(k\)-scheme is deflatable
    \footnote{Let \(k\) be any field, and a presheaf \(\calF: \Smk^{opp} \to \Spectra\).
    Consider the two morphism of presheaves \(j^*, \pi^* \infty^* \calF(\bfP^1 \times_k -) \to \calF(\bfA^1 \times_k -)\), where \(\pi: \bfA^1 \times_k X \to X\) is projection, \(\infty: \Spec(k) \to \bfP^1\) is the inclusion of the point at infinity, and \(j: \bfA^1 \hookrightarrow \bfP^1\) is the open immersion complementary to the point at \(\infty\). \(F\) is called deflatable if \(j^*\) and \(\pi^* \infty^*\) are homotopic.}
    Then for any \(n \in \bfZ\) and any regular local \(k\)-algebra \(R\).
    By \cite[Lemma 3.11]{elmanto2026motiviccohomologyequicharacteristicschemes}, the canonical map
    \begin{equation*}
        \pi_n(\calF(R)) \to \pi_n(\calF(Frac(R)))
    \end{equation*}
    is injective.
    We can extend this to essentially smooth schemes because homotopy groups of spectra commute with filtered colimits, and filtered colimits are exact.
    Alternatively, we know there is a short exact sequence of homotopy modules
    \begin{equation*}
        0 \to \bfI^{\ast} \to \bfK^{MW}_{\ast} \to \bfK^{M}_{\ast} \to 0,
    \end{equation*}
    i.e. a fiber sequence in \(\SH(k)\)
    \begin{equation*}
        \rmH \bfI \to \rmH \bfK^{MW} \to \rmH \bfK^{M}.
    \end{equation*}
    Let \(X\) be a semilocal scheme, for every \(d \geq 0\) we get diagrams of long exact sequences
    \begin{equation*}
        \begin{tikzcd}[cramped,column sep=tiny,row sep=scriptsize]
        	{0 } && {\bfI^{d+1}(X)} && {\bfK^{MW}_d(X)} && {\bfK^{M}_d(X)} && {H^1_{Nis}(X, \bfI^{d+1})} && {H^1_{Nis}(X, \bfK^{MW}_{d})} && \dots \\
        	\\
        	0 && {\bfI^{d+1}(\kappa(X))} && {\bfK^{MW}_d(\kappa(X))} && {\bfK^{M}_d(\kappa(X))} && {H^1_{Nis}(\kappa(X), \bfI^{d+1})} && {H^1_{Nis}(\kappa(X), \bfK^{MW}_{d})} && \dots
        	\arrow[from=1-1, to=1-3]
        	\arrow[from=1-3, to=1-5]
        	\arrow[from=1-3, to=3-3]
        	\arrow[from=1-5, to=1-7]
        	\arrow[from=1-5, to=3-5]
        	\arrow[from=1-7, to=1-9]
        	\arrow[from=1-7, to=3-7]
        	\arrow[from=1-9, to=1-11]
        	\arrow[from=1-9, to=3-9]
        	\arrow[from=1-11, to=1-13]
        	\arrow[from=1-11, to=3-11]
        	\arrow[from=3-1, to=3-3]
        	\arrow[from=3-3, to=3-5]
        	\arrow[from=3-5, to=3-7]
        	\arrow[from=3-7, to=3-9]
        	\arrow[from=3-9, to=3-11]
        	\arrow[from=3-11, to=3-13]
        \end{tikzcd}
    \end{equation*}
    Each of the vertical maps involving \(\bfK^{MW}_d\) is injective by \cite[Ch. 7. Cor. 3.3]{Bachmann_MWMotives}, and each of the vertical maps involving \(\bfK^{M}_d\) is injective by \cite[7.3.5]{ColliotThelene-BlochOgusGabber}.
    The sequence of kernels must vanish, so each vertical map involving \(\bfI^{d+1}\) is injective. 
\end{proof}

The mechanism by which we will prove that \(\calX\) is contractible is explained in the following remark.
\begin{remark}\label{rmk:connectiveness-simplicial-spectrum}
    Let \(K_\bullet\) be simplicial spectrum  and \(j \geq 0\). 
    Suppose that, for every \(m \geq 0\), the spectrum \(K_m\) is \((j - m)\)-connective
    \footnote{Meaning that the homotopy groups \(\pi_i(K_m)\) vanish for \(i < j-m\).}.
    Then the geometric realization \(|K_\bullet|\) is \(j\)-connective. This follows from \cite{Lurie_DAGVIII}[Remark 4.3.4].
\end{remark}
\noindent Therefore, to prove that \(\calX\) is \(j\)-connective for all \(j\), it is enough to show that \(R\Gamma_{Nis}(-, I^d)(\hat{\Delta}^m_F)\) is \((j-m)\)-connective, for some \(d\) that we do not have a priori to specify. We can prove there exists a \(d\) such that \(R\Gamma_{Nis}(-, I^d)(\hat{\Delta}^m_F)\) is contractible.
To this end, the following two results will be useful.

\begin{theorem}[\protect{\cite[Thm. 5]{Milnor_SimmetricInnerProducts}}]\label{thm:Milnor-Imperfection-theorem}
    Let \(F\) be a field of characteristic \(2\).
    If the degree\footnote{The dimension of \(F\) as a \(F^2\)-vector space.} of \(F\) over its subfield of squares \(F^2\) is \(d=2^j\), then the ideal \(I^j(F)\) is non-zero, but \(I^{j+1}(F) = 0\).
\end{theorem}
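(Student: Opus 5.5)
The plan is to work with bilinear Pfister forms and reduce both halves of the statement to linear algebra over \(F^2\). In characteristic \(2\) we have \(-1=1\), so \(\langle\langle a\rangle\rangle=\langle 1,a\rangle\) and \(h=\langle 1,1\rangle\). The fundamental ideal \(I(F)\) is additively generated by the forms \(\langle 1,a\rangle\), hence \(I^n(F)\) is additively generated by the \(n\)-fold Pfister forms
\[
\varphi=\langle\langle a_1,\dots,a_n\rangle\rangle=\langle 1,a_1\rangle\otimes\cdots\otimes\langle 1,a_n\rangle ,
\]
for instance via the symbolic description of \(GW(F)\) used for Theorem \ref{thm:description-of-KW}. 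The key observation is that in characteristic \(2\) the map \(x\mapsto \varphi(x,x)\) is additive and \(F^2\)-semilinear. Concretely, \(\varphi(x,x)=\sum_{S\subseteq\{1,\dots,n\}} a_S\,x_S^2\) with \(a_S=\prod_{i\in S}a_i\). Hence the set of values of \(\varphi\) together with \(0\) is the \(F^2\)-span of the \(2^n\) monomials \(a_S\), which is the subfield \(F^2(a_1,\dots,a_n)\).

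\textbf{Vanishing of \(I^{j+1}(F)\).} Let \(n=j+1\). Since \([F:F^2]=2^j\), any \(n\) elements \(a_1,\dots,a_n\) are \(2\)-dependent. After reordering and discarding trivial cases, we may assume \(a_n\in F^2(a_1,\dots,a_{n-1})\). Put \(\psi=\langle\langle a_1,\dots,a_{n-1}\rangle\rangle\). By the observation above, \(a_n\) is then a value of \(\psi\) (or \(a_n=0\), which is excluded since \(a_n\) is a unit).

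Next I would invoke roundness of bilinear Pfister forms: for every nonzero value \(c\) of \(\psi\) there is an isometry \(c\psi\cong\psi\). I would cite Elman--Karpenko--Merkurjev for this, or prove it by induction on \(n\) using the field structure of the value set. Applying it with \(c=a_n\) gives
\[
\psi\otimes\langle 1,a_n\rangle=\psi\perp a_n\psi\cong\psi\perp\psi=\psi\otimes h ,
\]
and this is \(0\) in \(W(F)=GW(F)/(h)\). Since every generator of \(I^{j+1}(F)\) vanishes, \(I^{j+1}(F)=0\).

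\textbf{Nonvanishing of \(I^j(F)\).} Choose a \(2\)-basis \(a_1,\dots,a_j\) of \(F\) over \(F^2\), so that the \(2^j\) monomials \(a_S\) are \(F^2\)-linearly independent. Then \(\varphi(x,x)=\sum_S a_S x_S^2\neq 0\) for every \(x\neq 0\), so \(\varphi=\langle\langle a_1,\dots,a_j\rangle\rangle\) is anisotropic. It remains to show that an anisotropic form has nonzero class in \(W(F)\). For this I would use the characteristic-\(2\) Witt decomposition for symmetric bilinear forms: every nondegenerate form is the orthogonal sum of an anisotropic form and a metabolic form, and the anisotropic part is unique up to isometry. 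I would also identify the forms lying in the ideal \((h)\) as the metabolic ones, stably. Granting these two facts, \(\varphi\) is nonzero in \(W(F)\), and hence \(I^j(F)\neq 0\).

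I expect the main obstacle to be exactly this \(W\)-theoretic bookkeeping in characteristic \(2\). There metabolic and hyperbolic forms differ, and Witt cancellation fails for bilinear forms in general. So the real work is to check that \(W(F)\) as defined here, namely \(GW(F)/(h)\), agrees with the classical quotient of \(GW(F)\) by metabolic forms, and that the anisotropic part is well defined in this quotient. My first attempt would be to show that every metabolic form is stably isometric to a sum of copies of \(h=\langle 1,1\rangle\), so that the two quotients coincide. After that, the uniqueness of the anisotropic part can be quoted from Elman--Karpenko--Merkurjev or from Milnor's original paper.
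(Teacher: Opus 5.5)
\textbf{Verdict.} Your proposal is correct, and it takes a genuinely different route from the paper, which does not prove this statement. The paper quotes the theorem from Milnor and later uses only its vanishing half, $I^{j+1}(F)=0$, in Corollary \ref{corollary:vanishing-fundamental-ideal}.

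\textbf{Vanishing half.} Your argument here is elementary and complete: $j+1$ elements are $2$-dependent, so $a_n$ is a value of $\psi$, and roundness then gives $\psi\otimes\langle 1,a_n\rangle\cong\psi\otimes\langle 1,1\rangle$. No Witt-group bookkeeping is needed, because $\langle 1,1\rangle$ is literally the paper's $h=1+\langle -1\rangle$. Roundness has a one-line proof in characteristic $2$. Realize $\psi$ as $(u,v)\mapsto s(uv)$ on $E=F[t_1,\dots,t_{n-1}]/(t_i^2-a_i)$, where $s$ takes the coefficient of $1$ in the monomial basis $t_S$. If $c=\psi(x,x)\neq 0$, then $x^2=\sum_S a_S x_S^2=c$ in $E$. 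Hence $u\mapsto xu$ is bijective and satisfies $\psi(xu,xv)=c\,\psi(u,v)$, which gives $c\psi\cong\psi$.

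\textbf{Nonvanishing half.} The obstacle you flag resolves quickly; note first one point of care. In characteristic $2$ the hyperbolic plane $\mathcal{H}$ (with $b(e,f)=1$ and $b(e,e)=b(f,f)=0$) is alternating, so it is not isometric to $\langle 1,1\rangle$. Your identification $h=\langle 1,1\rangle$ holds only in $GW(F)$, and that is all you need.
\begin{itemize}
\item Adjoin $g$ with $b(g,g)=1$. The basis $e+f+g$, $e+g$, $f+g$ shows $\mathcal{H}\perp\langle 1\rangle\cong\langle 1,1,1\rangle$, so $[\mathcal{H}]=[\langle 1,1\rangle]$ in $GW(F)$.
\item Since $a\mathcal{H}\cong\mathcal{H}$, we get $\langle a\rangle h=h$, so the ideal $(h)$ equals $\mathbf{Z}h$.
\item Every metabolic form is an orthogonal sum of planes $M(a)$, with $b(e,e)=0$, $b(e,f)=1$, $b(f,f)=a$: split off one plane using a Lagrangian basis vector, then induct.
\item For $a\neq 0$ one has $M(a)\cong\langle a,a\rangle=\langle a\rangle h$, using the basis $f$, $f+ae$.
\end{itemize}
Hence the subgroup of $GW(F)$ generated by metabolic classes is exactly $\mathbf{Z}h$, and $GW(F)/(h)$ is the classical Witt ring. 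Uniqueness of the anisotropic part in the Witt decomposition of bilinear forms (Elman--Karpenko--Merkurjev, Chapter I) then shows that your anisotropic Pfister form on a $2$-basis is nonzero in $W(F)$.

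\textbf{Comparison.} The citation keeps the paper short. Your route shows that the statement reduces to the $F^2$-semilinearity of $x\mapsto\varphi(x,x)$ together with roundness. It also shows that the half the paper actually needs requires no structure theory of Witt rings at all.
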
 
The integer \(j\) is called \emph{degree of imperfection} of \(F\).
\begin{remark}
    If \(F\) is finitely generated over a perfect field \(k\) of characteristic \(2\) (or any field of characteristic \(p\)), the degree of imperfection of \(F\) corresponds to the transcendence degree of \(F\) over \(k\).
    In particular, we see that in this case, the degree of imperfection is invariant under finite extensions and increases by \(1\) under simple transcendental extensions.
    
    To prove this, consider the exact sequence of K\"ahler modules
    \begin{equation*}
        \Omega^1_{k/\bfZ} \otimes_k F \to \Omega^1_{F/\bfZ} \to \Omega^1_{F/k} \to 0.
    \end{equation*}
    Since \(k\) is perfect, \(\Omega^1_{k/\bfZ}\) must vanish. 
    We also know that the rank of the K\"ahler module \(\Omega^{1}_{F/k}\) corresponds to the transcendental degree of \(F/k\).
    Finally, we need to show that the rank of the K\"ahler module \(\Omega^{1}_{F/\bfZ}\) corresponds to the degree of imperfection of \(F\).
    Notice that \(F/F^2\) is purely inseparable finite
    extension. 
    Therefore, we can find a elements \(x_1, \dots, x_j\) such that \(\{x_1^{i_1} x_2^{i_2} \cdots x_j^{i_j} \mid i_1, i_2, \dots, i_j \in \{0,1\}\}\) is an \(F^2\)-basis of \(F\). 
    We claim \(\{dx_1, \dots, dx_j\}\) is an \(F\)-basis for \(\Omega^1_{F/F^2}\). 
    Using Leibniz's rule, we can immediately say these elements are generators. 
    To prove they are linearly independent, let's start by constructing a derivation \(\Pi^i\) by formally differentiating the expression \(\sum a_{i_1 \cdots i_n} x_1^{i_1} x_2^{i_2} \cdots x_j^{i_j}\) with respect to \(x_i\). 
    This is an \(F^2\)-derivation, so it induces a factorization of this map as \(F \overset{d_{F/\bfZ}}{\longrightarrow} \Omega^1_{F/\bfZ} \overset{\pi^i}{\longrightarrow} F\), where \(\pi^i(dx_i)=1\) and \(\pi^i(dx_j)=0\) for \(j \neq i\). 
    Next, assume there is a linear combination \(\sum a_i d x_i=0\) with \(a_i \in F\). 
    So, \(0=\pi^i(0)=\pi^i(\sum a_i d x_i)=ai\), from which we can conclude all \(a_i\)'s are zero.
    So we have proved \(j=\dim_F(\Omega^1_{F/F^2})\) and
    there is an isomorphism of \(F\)-vector spaces \(\Omega^1_{F/F^2} \to \Omega^1_{F/\bfZ}\) given by \(dx \mapsto dx\) (as all \(2\)-powers have already zero differential).
\end{remark}

\begin{corollary}\label{corollary:vanishing-fundamental-ideal}
     Let \(k\) be a perfect field of characteristic \(2\), \(F\) be a finitely generated field over \(k\), and \(m\) a non-negative integer. For all \(d > m + tr.deg.(F/k)+ 1\)
     \begin{equation*}
         R\Gamma_{Nis}(-, I^d)(\hat{\Delta}^m_F) \simeq 0.
     \end{equation*}
\end{corollary}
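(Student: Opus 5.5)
The plan is to combine the injectivity result Lemma \ref{lemma:semilocality} with Milnor's vanishing Theorem \ref{thm:Milnor-Imperfection-theorem}. The spectrum \(R\Gamma_{Nis}(-, I^d)(\hat{\Delta}^m_F)\) is determined by its homotopy groups. These are the Nisnevich cohomology groups \(H^i_{Nis}(\hat{\Delta}^m_F, I^d)\) for \(i \geq 0\), which sit in homotopical degree \(-i\); here the left Kan extension to essentially smooth schemes commutes with cohomology because filtered colimits are exact. So it suffices to show that all these groups vanish for \(d\) in the stated range.

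The higher groups are handled by Lemma \ref{lemma:semilocality}. For \(i>0\) it says that \(H^i_{Nis}(\hat{\Delta}^m_F, I^d)\) injects into \(H^i_{Nis}\) of the generic point, and the latter vanishes because fields have Nisnevich cohomological dimension zero. Hence \(H^i_{Nis}(\hat{\Delta}^m_F, I^d)=0\) for all \(i>0\) and all \(d\).

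For \(i=0\), Lemma \ref{lemma:semilocality} gives an injection \(I^d(\hat{\Delta}^m_F) \hookrightarrow I^d(L)\), where \(L=\kappa(\Delta^m_F)\) is the function field of the simplex. So it is enough to show \(I^d(L)=0\).
\begin{itemize}
\item The field \(L\) is a purely transcendental extension of \(F\) of transcendence degree \(m\), since \(\Delta^m_F\) is affine \(m\)-space over \(F\).
\item Hence \(L\) is finitely generated over \(k\) with \(\mathrm{tr.deg}(L/k) = m + \mathrm{tr.deg}(F/k)\).
\item By the remark following Theorem \ref{thm:Milnor-Imperfection-theorem}, the degree of imperfection of \(L\) equals this transcendence degree \(j\). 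That is, \([L:L^2]=2^j\).
\item Milnor's theorem then gives \(I^{j+1}(L)=0\), so \(I^d(L)=0\) for all \(d\geq j+1\). In particular this holds for \(d> m+\mathrm{tr.deg}(F/k)+1\), with room to spare.
\end{itemize}
Consequently every homotopy group of \(R\Gamma_{Nis}(-, I^d)(\hat{\Delta}^m_F)\) vanishes, and the spectrum is contractible.

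The main point needing care is the bookkeeping of the generic point. The paper writes \(F(t_0,\dots,t_m)\), but the affine relation \(\sum t_i = 1\) means the true function field has transcendence degree \(m\) over \(F\), not \(m+1\). Either count lies below the stated bound \(d>m+\mathrm{tr.deg}(F/k)+1\), so the conclusion holds regardless; the extra \(+1\) in the statement absorbs this discrepancy.

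The other subtlety is that \(\hat{\Delta}^m_F\) is only essentially smooth. I would rely on Lemma \ref{lemma:semilocality} as stated for this semilocal scheme, since it reduces everything to the generic point, which is a finitely generated field over \(k\) where Milnor's theorem applies.
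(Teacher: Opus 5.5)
Your proposal is correct and takes the same route as the paper. Coconnectivity and Lemma \ref{lemma:semilocality} kill the nonzero homotopy groups, and \(\pi_0\) injects into \(I^d\) of the generic point, which vanishes by Milnor's theorem once the degree of imperfection is identified with the transcendence degree. You are also right that the function field of \(\Delta^m_F\) has transcendence degree \(m\) over \(F\), not \(m+1\) as the paper's \(F(t_0,\dots,t_m)\) suggests: the stated bound is exactly what the paper's count requires, so with the correct count it holds with one to spare.
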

\begin{proof}
    For each \(n > 0\), \(\pi_n(R\Gamma_{Nis}(-, I^d)(\hat{\Delta}^m_F))\) vanish, since the spectrum is coconnective (there is no negative cohomology).
    For each \(n < 0 \), \(\pi_n(R\Gamma_{Nis}(-, I^d)(\hat{\Delta}^m_F)) \simeq H^n_{Nis}(-, I^d)(\hat{\Delta}^m_F)\) vanish by Lemma \ref{lemma:semilocality}.
    Finally,  by Lemma \ref{lemma:semilocality}, \(\pi_0(R\Gamma_{Nis}(-, I^d)(\hat{\Delta}^m_F))\) injects into \(I^d(F(t_0, \dots, t_m))\) which vanish by Lemma \ref{thm:Milnor-Imperfection-theorem} since  \(d> m + tr.deg.(F/k)+1\). 
    Therefore, \(R\Gamma_{Nis}(-, I^d)(\hat{\Delta}^m_F)\) is contractible.
\end{proof}

\begin{proof}[\textbf{Proof of Theorem }\ref{assumption}]
    Fix a perfect field \(k\) of characteristic \(2\). We want to use the Bachmann-Fasel effectivity criterion \ref{thm:Bachmann-Fasel-effectivity} to prove \(\rmH\bfK^W\) is effective. We need to verify
    \begin{equation*}
        \left\lvert (\rmH\bfK^W \otimes \; \bfG_m^{\otimes d})( \hat{\Delta}^\bullet_F ) \right\rvert = \left\lvert R\Gamma_{Nis}(-, I^d) (\hat{\Delta}^{\bullet}_F)\right\rvert \simeq \ast,
    \end{equation*}
    for all \(d \geq 1\) and all finitely generated fields \(F/k\).
    Using the homotopy coniveau tower (see Lemma \ref{lemma:hconiveau-HKW}), we have the following equivalence.
    \begin{equation*}
        s_0 (\rmH \K^{W} \otimes \; \bfG_{m}^{\otimes d})(F)
        \overset{\simeq}{\longrightarrow}  
        \left\lvert R\Gamma_{Nis}(-, I^d (\hat{\Delta}^{\bullet}_F)\right\rvert;
        \end{equation*}
    together with Lemma \ref{lemma:s_0-KW},
    \begin{equation*}
        s_0 (\rmH\bfK^W \otimes \, \bfG_m^{\otimes d+1})  
        \overset{\simeq}{\longrightarrow} 
        s_0 (\rmH\bfK^W \otimes \, \bfG_m^{\otimes d})
    \end{equation*}
    we obtain equivalences of spectra
    \begin{equation*}
        \calX\coloneq\left\lvert R\Gamma_{Nis}(-, I)(\hat{\Delta}^\bullet_F) 
        \right\rvert \overset{\simeq}{\longleftarrow} 
        \left\lvert R\Gamma_{Nis}(-, I^2)(\hat{\Delta}^\bullet_F)
        \right\rvert \overset{\simeq}{\longleftarrow}
        \cdots
    \end{equation*}
    Because of Remark \ref{rmk:connectiveness-simplicial-spectrum}, proving  \(\calX\) is contractible is equivalent to proving that for a fixed \(m \geq 0\) there exists \(d \geq 1\) such that \(R\Gamma_{Nis}(-, I^d)(\hat{\Delta}^m_F) \simeq 0\).
    Corollary \ref{corollary:vanishing-fundamental-ideal} proves such a \(d\) can always be found. Therefore, \(\rmH \K^{W}\) is effective.
\end{proof}
 
\newpage
\printbibliography
\end{document}